\newcommand{\Be}{\begin{equation}}
\newcommand{\Ee}{\end{equation}}
\newcommand{\Bm}{\begin{multline}}
\newcommand{\Em}{\end{multline}}
\newcommand{\Bea}{\begin{eqnarray}}
\newcommand{\Eea}{\end{eqnarray}}
\newcommand{\Beas}{\begin{eqnarray*}}
\newcommand{\Eeas}{\end{eqnarray*}}
\newcommand{\Benu}{\begin{enumerate}}
\newcommand{\Eenu}{\end{enumerate}}
\newcommand{\Bi}{\begin{itemize}}
\newcommand{\Ei}{\end{itemize}}
\def\intslash{\rlap{\kern  .32em $\mspace {.5mu}\backslash$ }\int}
\def\qsl{{\rlap{\kern  .32em $\mspace {.5mu}\backslash$ }\int_{Q_x}}}
\def\vth{\vartheta}
\def\tp{{\tilde p}}
\def\tq{{\tilde q}}
\def\emph#1{{\it #1 }}
\def\ga{\gamma}
\def\cf{{\it cf}}
\def\loc{{\text{\rm loc}}}
\def\rad{{\text{\rm rad}}}
\def\sph{{\text{\rm sph}}}
\def\ev{{\text{\rm ev}}}
\def\inn#1#2{\langle#1,#2\rangle}
\def\noi{\noindent}
\def\lc{\lesssim}
\def\gc{\gtrsim}
\def\eps{\varepsilon}
\def\ka{\kappa}
\def\la{\lambda}
\def\vphi{\varphi}
\def\om{\omega}
\def\fA{{\mathfrak {A}}}
\def\fB{{\mathfrak {B}}}
\def\fM{{\mathfrak {M}}}
\def\bbR{{\mathbb {R}}}
\def\bbT{{\mathbb {T}}}
\def\bbZ{{\mathbb {Z}}}
\def\cB{{\mathcal {B}}}
\def\cC{{\mathcal {C}}}
\def\cF{{\mathcal {F}}}
\def\cH{{\mathcal {H}}}
\def\cI{{\mathcal {I}}}
\def\cK{{\mathcal {K}}}
\def\cM{{\mathcal {M}}}
\def\cR{{\mathcal {R}}}
\def\cS{{\mathcal {S}}}
\def\cT{{\mathcal {T}}}
\def\be#1{\begin{equation}\label{ #1}}
\def\endeq{\end{equation}}
\def\endal{\end{align}}
\def\bas{\begin{align*}}
\def\eas{\end{align*}}
\def\bi{\begin{itemize}}
\def\ei{\end{itemize}}
\def\floc{{\rm LF}}
\def\eps{\varepsilon}
\def\emph#1{{\it #1}}
\def\textbf#1{{\bf #1}}
\def\bmod{\frak b}
\def\gmod{\frak g}
\theoremstyle{plain}
   \newtheorem{theorem}{Theorem}[section]
   \newtheorem{proposition}[theorem]{Proposition}
   \newtheorem{lemma}[theorem]{Lemma}
   \newtheorem{corollary}[theorem]{Corollary}
   \newtheorem{observation}[theorem]{Observation}
\theoremstyle{remark}
\theoremstyle{definition}
\numberwithin{equation}{section}
\begin{document}

\title{
Characterizations of
Hankel multipliers}

\author{Gustavo Garrig\'os and Andreas Seeger}

\address{G. Garrig\'os \\
Dep. Matem\'aticas C-XV \\
Universidad Aut\'onoma de Madrid\\
28049 Madrid, Spain}
\email{gustavo.garrigos@uam.es}

\address{A. Seeger   \\
Department of Mathematics\\ University of Wisconsin-Madison\\Madison, WI 53706, USA}
\email{seeger@math.wisc.edu}

\subjclass{42B15}

\begin{thanks} {G.G.
partially supported by grant ``MTM2007-60952'' and \emph{Programa
Ram\'on y Cajal}, MCyT (Spain). A.S. partially supported
by NSF grant DMS 0652890.}
\end{thanks}


\begin{abstract} We give characterizations of radial Fourier multipliers as acting on radial $L^p$ functions, $1<p<2d/(d+1)$,
in terms of Lebesgue space norms  for Fourier  localized pieces of the convolution kernel.
This is a special case of  corresponding results for general
Hankel multipliers. Besides $L^p-L^q$ bounds we also characterize
 weak type inequalities and intermediate inequalities involving
Lorentz spaces. Applications  include results on
 interpolation of multiplier spaces. \end{abstract}

\maketitle

\section{Introduction}
The purpose of this paper is to study convolution operators with
radial kernels acting on radial $L^p$ functions in $\bbR^d$. We
are interested in the boundedness properties of such operators on
$L^p_\rad$, the space of radial $L^p$ functions. It turns out
(perhaps surprisingly) that for a large range of $p$ one can
actually prove  a characterization in terms of the convolution
kernel. Moreover we
 also obtain characterizations for the  weak type $(p,p)$
inequality, or, more generally, results involving the
interpolating Lorentz spaces $L^{p,\sigma}_\rad$ for $p\le
\sigma\le \infty$. Here $L^{p,\sigma}_\rad$ denotes the subspace
of radial functions of the Lorentz space $L^{p,\sigma}(\bbR^d)$.
Recall that we have the strict inclusion $L^{p,\sigma_1}\subset
L^{p,\sigma_2}$ for $\sigma_1<\sigma_2$. The space
$L^{p,\infty}_\rad$ is the usual weak type $p$ space, and of
course
 $L^{p,p}_\rad=L^p_\rad$.

 Let $K\in \cS'(\bbR^d)$ be a radial convolution kernel, and denote by $\cT_K$ the convolution operator $f\mapsto \cT_K f=K*f$. We shall always
 assume that the Fourier transform $\widehat K$ 
is locally integrable;
this is a  trivial necessary condition for
 $L^p$ boundedness (and also for
 $L^p\to L^q$ boundedness with $q\le 2$).
Now consider the scaled kernels
$$K_t=t^{-d}K(t^{-1}\cdot).$$
Note that estimates for $\cT_K$ imply appropriately scaled
estimates for $\cT_{K_t}$, $t>0$.
 Let $\Phi$ be any radial
Schwartz function
whose
 Fourier transform is
compactly supported in $\bbR^d\setminus\{0\}$. By using dilation invariance and  testing the convolution with $K_t$  on
$\Phi$, we get a trivial necessary  condition for
$L^{p,1}_\rad\to L^{p,\sigma}$ boundedness of $\cT_K$, namely that
\Be \label{LpscondonK}
\sup_{t>0} \|\Phi*K_t\|_{L^{p,\sigma}}<\infty.\Ee
Our main result is that
\eqref {LpscondonK} for a single
nontrivial radial $\Phi$ is also sufficient for the convolution to
map $L^p_\rad$ to $L^{p,\sigma}$.

\begin{theorem}\label{main}
Let $K$ be  radial  and let  $\cT_K$ be the associated convolution operator.
Suppose $d>1$, $1<p<\frac{2d}{d+1}$, and $p\le \sigma\le\infty$.
Then the following statements are equivalent:

(a) There is a radial  Schwartz-function $\Phi$  (not identically zero)
for which  condition \eqref{LpscondonK} is satisfied.

(b) $\cT_K$ 
extends to a bounded operator  mapping  
 $L^{p,1}_\rad(\bbR^d)$ to $L^{p,\sigma}_\rad (\bbR^d)$.

(c)   $\cT_K$ extends to a bounded  operator mapping   
$L^{p}_\rad(\bbR^d)$ to $L^{p,\sigma}_\rad (\bbR^d)$.
\end{theorem}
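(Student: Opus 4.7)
The implications $(c) \Rightarrow (b)$ and $(b) \Rightarrow (a)$ are essentially immediate. The first is the continuous inclusion $L^{p,1}_\rad \hookrightarrow L^p_\rad$. For the second, since source and target have the same Lorentz index $p$, a change of variables shows that $\|\cT_{K_t}\|_{L^{p,1}_\rad \to L^{p,\sigma}_\rad} = \|\cT_K\|_{L^{p,1}_\rad \to L^{p,\sigma}_\rad}$ for every $t > 0$, and applying this uniform operator bound to the fixed test function $\Phi$ yields
\[
\sup_{t>0}\|\Phi * K_t\|_{L^{p,\sigma}} \;\le\; \|\cT_K\|_{L^{p,1}_\rad \to L^{p,\sigma}_\rad}\,\|\Phi\|_{L^{p,1}}.
\]

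The real work lies in $(a) \Rightarrow (c)$. I would begin by recasting the single-$\Phi$ hypothesis as a uniform Lorentz bound on the Littlewood--Paley pieces of $K$. Fix a radial $\eta \in C_c^\infty(\bbR^d \setminus \{0\})$ with $\sum_{k \in \bbZ} \eta(2^{-k}\cdot) \equiv 1$ off the origin, and set $L_k := \cF^{-1}\bigl[\eta(2^{-k}\xi)\widehat K(\xi)\bigr]$. Choosing $\Phi$ in (a) with $\widehat\Phi = \eta$ on the relevant shell, a scaling computation identifies $L_k(x)$ with $2^{kd}(\Phi * K_{2^k})(2^k x)$, so that the hypothesis delivers the scale-invariant bound $\|L_k\|_{L^{p,\sigma}} \lesssim 2^{kd(1 - 1/p)}$ uniformly in $k \in \bbZ$.

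Next I would invoke an atomic decomposition of radial functions tailored to the range $1 < p < 2d/(d+1)$: any radial $f \in L^{p,1}_\rad$ admits $f = \sum_j c_j a_j$ with $\sum_j |c_j| \lesssim \|f\|_{L^{p,1}}$ and each $a_j$ a normalized radial ``annular atom'' concentrated at scale $|x| \sim 2^{\ell_j}$. The problem reduces to a uniform $L^{p,\sigma}_\rad$-bound on each $\cT_K a_j$. Decomposing $\cT_K a_j = \sum_k L_k * a_j$ dyadically in frequency, each piece is controlled by one of two mechanisms according to whether $2^k \cdot 2^{\ell_j} \gtrsim 1$ or $\lesssim 1$: in the compatible regime by the $L^{p,\sigma}$-estimate on $L_k$ combined with the geometric support of $a_j$; in the mismatched regime by exploiting the Bessel-type decay of $\widehat{a_j}$ on the annulus $\{|\xi| \sim 2^k\}$. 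The exponent $p < 2d/(d+1)$ is precisely the threshold at which the $O(|\xi|^{-(d-1)/2})$ decay of the surface-measure Fourier transform on $S^{d-1}$, summed over the mismatched dyadic scales, produces a convergent geometric series in $|k+\ell_j|$.

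The main obstacle is the assembly step: passing from the piecewise bounds on $L_k * a_j$ to a genuine Lorentz-space bound on $\cT_K f$ while preserving the index $\sigma$. I expect to need an almost-orthogonality argument organizing the double sum over $k$ and $j$ so that only near-diagonal pairs $k + \ell_j = O(1)$ genuinely interact, together with careful distribution-function bookkeeping at the endpoint cases $\sigma = p$ and $\sigma = \infty$, the intermediate Lorentz targets being recovered by real interpolation. The final upgrade from the $L^{p,1}_\rad \to L^{p,\sigma}_\rad$ bound that the atomic argument directly produces to the $L^p_\rad \to L^{p,\sigma}_\rad$ bound demanded by (c) is then obtained by interpolation with the weak-type endpoint $\sigma = \infty$ (where restricted weak type and weak type coincide) or by a density / extension argument once the quasi-norm preservation is in hand.
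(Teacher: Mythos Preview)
Your treatment of $(c)\Rightarrow(b)\Rightarrow(a)$ is correct and matches the paper. The substantive issue is $(a)\Rightarrow(c)$, where your plan has a genuine gap and also departs significantly from the paper's method.

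\textbf{The gap.} Your atomic scheme, even if executed cleanly, lands you at $L^{p,1}_\rad\to L^{p,\sigma}_\rad$, which is (b), not (c). The proposed upgrade does not go through. The claim that ``restricted weak type and weak type coincide'' at $\sigma=\infty$ is not a general fact for a single operator at a single exponent~$p$; the standard passage from restricted weak type to strong type (Marcinkiewicz, or real interpolation of Lorentz spaces) requires bounds at \emph{two} distinct exponents $p_0<p<p_1$, whereas hypothesis~(a) gives you information only at the one fixed $p$. There is no dilation-invariance to exploit either, since $\cT_K$ is translation-invariant but not dilation-invariant. A density or extension argument is likewise unavailable: $L^{p,1}$ is dense in $L^p$, but boundedness $L^{p,1}\to L^{p,\sigma}$ does not propagate to $L^p\to L^{p,\sigma}$ by continuity, because the $L^{p,1}$ and $L^p$ norms are inequivalent. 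Thus your chain closes only at (b), and since $(c)\Rightarrow(b)$ is the trivial inclusion you have not reached (c).

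\textbf{How the paper avoids this.} The paper never routes through $L^{p,1}$. After passing to the Hankel setting and applying Littlewood--Paley theory, it reduces to a vector-valued square-function estimate with $L^{p,\omega}$, $\omega=\min\{p,\sigma\}=p$, on the input side directly. The operators $T^j$ are then split according to the \emph{spatial} relation between input radius $s$ and output radius $r$: a Hardy-type piece $H_{j,m}$ for $r\gg s$, a Calder\'on--Zygmund piece $S_{j,n,i}$ for $r\sim s$, and an easier error $E_{j,m}$ for $r\ll s$. The full Lorentz hypothesis is spent only on $H_{j,m}$ (Proposition~\ref{Hprop}), where Bessel asymptotics give $(1+|r-s|)\approx(1+r)$ and reduce matters to the finiteness of $\|(1+s)^{-(d-1)/2}\|_{L^{p',1}(\mu_d)}$, which is exactly the condition $p<2d/(d+1)$; crucially this estimate carries $\|f\|_{L^{p,\infty}}$ on the right, and the assembly over the disjoint cutoffs $\chi_{m-j}$ then yields the honest $L^p$ input norm via Lemma~\ref{vectorlor}. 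The pieces $S$ and $E$ require only a weaker hypothesis and are handled by standard singular-integral theory and interpolation with the elementary $L^p\to L^2$ endpoint. This organization --- decomposing in the output/input spatial scales rather than atomizing $f$ --- is what produces the strong-type bound without any upgrade step.
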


As a consequence  one can
show that if in addition  $\widehat K$ is compactly supported away
from the origin
then the $L^p$ boundedness of   $\cT_K$ is equivalent
with $K\in L^p_\rad$.
{\it Cf.} \S\ref{compmult} for this and  somewhat
stronger results for boundedness on Lorentz spaces.
We remark that the condition  $p<2d/(d+1)$ is necessary  since for
$p\ge 2d/(d+1)$ there are radial $L^p$ kernels whose Fourier transforms
are  unbounded  and compactly supported in $\bbR^d\setminus\{0\}$,
{\it cf.} the comment following Corollary \ref{besovcor} below.


It is convenient to formulate these characterizations for
 more general Fourier-Bessel (or Hankel) transforms of
functions in $\bbR^+$.
As it is well known
(\cite{stw}, ch. IV) the Fourier transform of radial functions
can be expressed in terms  of integral
transforms on functions defined on $\bbR^+$, which is equipped
with the measure
$r^{d-1} dr$.
To be specific we define the Fourier transform  of  a
Schwartz function $g$ in $\bbR^d$ by
$\widehat g(\xi)\equiv \cF_{\bbR^d}[g](\xi)=\int g(y)e^{-i\inn{y}{\xi}} dy$.
We
recall  that if $g$ is {\it radial}, $g(x)= f(|x|)$ then its Fourier 
transform is radial and is given by
\Be\label{Fourierradial}\widehat g(\xi)= (2\pi)^{d/2} \cB_d f(\rho), 
\quad |\xi|=\rho,
\Ee
where $\cB_d$ denotes a Fourier-Bessel transform acting on functions
on the half line.
This
 transform can be defined for all {\it real} parameters $d>1$,
and it is given by
\Be\label{Fourier-Bessel}\cB_d f(\rho)= \int_0^\infty f(s)
B_d(s\rho) s^{d-1} ds
\Ee
where
\Be \label{Bd} B_d (\rho)= \rho^{-\frac{d-2}{2}} J_{\tfrac{d-2}2}(\rho)
\Ee
and $J_\alpha$ denotes the standard Bessel function.
This definition is closely related with the classical
(or {\it nonmodified})
Hankel transform given by
$$\cH_\alpha f(x)= \int_0^\infty \sqrt{xy} J_\alpha (xy) f(y) dy;$$
indeed
$\cB_d    =  M_{-\frac{d-1}{2}}
\cH_{\frac{d-2}{2}}M_{\frac{d-1}{2}}$
where the multiplication operator $M_c$ is defined by
$M_cf(r):=r^c f(r).$
The operator $\cB_d$ is just the {\it modified
Hankel transform} $H_\nu\equiv H_\nu^{\text {mod}}$
used in most papers on the subject, with the reparametrization
$H_\nu^{\text {mod}}= \cB_{2\nu+2}$.
We prefer our  notation  only because of the connection
with radial Fourier multipliers when $d$ is an integer. For $d=1$ one
recovers the cosine transform.
If $d>1$ is an integer then  the function $B_d$ in \eqref{Bd}
 represents (up to a constant) the Fourier transform of the surface measure on the unit sphere in $\bbR^d$. For
general $d\ge 1$  the functions $B_d$ are eigenfunctions with respect to the second order Bessel  differential operator $L=-D^2-\frac{d-1}\rho D$;  here $D=d/d\rho$.

In what follows
let
\Be \label{mud} d\mu_d = r^{d-1} dr\Ee and let
$L^p(\mu_d)$ be the Lebesgue space of measurable functions $f$
with $$\|f\|_{L^p(\mu_d)}=\Big(\int_0^\infty  |f(r)|^pr^{d-1} dr\Big)^{1/p}<\infty.$$
We continue to use the notation  $\|f\|_p$ for the standard $L^p$ norm on $\Bbb R$ (with respect to Lebesgue measure).
Let $\cS(\bbR_+)$ be the space of (restrictions to $\bbR^+$ of) 
even $C^\infty$ functions on $\bbR$ for which all derivatives decrease rapidly; then $\cB_d$ is
an isomorphism of $\cS(\bbR_+)$, an isometry of $L^2(\bbR_+,\mu_d)$,  and $\cB_d=\cB_d^{-1}$.
Clearly the space $\cS(\bbR_+)$ is dense in $L^p(\mu_d)$. It is also 
useful to note that the space $\cB_d(C^\infty_0)$ is 
dense in $L^p(\mu_d)$ for $1<p<\infty$; here $C^\infty_0$ is the class of
 $C^\infty$ functions with compact support in $(0,\infty)$. This statement is proved in Theorem 4.7 of \cite{st-tr}.
Clearly, if $m$ is locally integrable on $\bbR^+$
the operator $T_m$ defined by 
\Be\label{Tm}
T_m f(r)= \cB_d[ m\cB_d f](r)
\Ee is well defined
for $f\in \cB_d(C^\infty_0)$.
We remark that $L^1(\mu_d)$ is a commutative Banach algebra with
respect to a certain convolution structure \cite{guy}, and the
operators \eqref{Tm} can then be regarded as generalized convolutions.
However in this paper we shall not need to make
use of the precise  definition of the  convolution structure.

We now formulate necessary and sufficient characterizations for $L^p\to L^q$ boundedness for $T_m$ as well as extensions to Lorentz space
inequalities. 
Our main characterization
is in terms of
size properties of the
one-dimensional Fourier transform  of localizations of $m$.

\begin{theorem}\label{mainhankel}
Let $m\in L^1_\loc(\bbR^+)$ and
 let $\phi$ be a $C^\infty$ function
compactly supported in $\bbR_+$ (not identically zero).
Suppose
$1<d<\infty$, $1<p<\frac{2d}{d+1}$, $p\le q< 2$ and
$p\le \sigma\le \infty$.
Then the following statements are equivalent.

(i) $T_m$ extends to a bounded operator $T_m: L^{p,\omega}(\mu_d)\to
L^{q,\sigma}(\mu_d)$, for $\omega=\min\{\sigma, q\}$.

(ii)
$T_m$ extends to a bounded operator $T_m: L^{p,1}(\mu_d)\to
L^{q,\sigma}(\mu_d)$.


(iii)
\Be \label{besseld}\sup_{t>0} \,\,
t^{d(\frac 1p-\frac 1q)}
\big\|\cB_d[\phi m(t\cdot)]\big\|_{L^{q,\sigma}(\mu_d)}<\infty.
\Ee

(iv) With $k_t(x)=\cF^{-1}_\bbR[\phi m(t\cdot)](x)$,  the condition
\Be \label{global}\sup_{t>0} \,\,
t^{d(\frac 1p-\frac 1q)}\,
\big\| (1+|\cdot|)^{-\frac{d-1}{2} } k_t
\big\|_{L^{q,\sigma}((1+|x|)^{d-1} dx)} <\infty
\Ee
holds.
\end{theorem}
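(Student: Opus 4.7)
\medskip

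\noindent\textbf{Plan.} My plan is to prove the cycle $(i)\Rightarrow(ii)\Rightarrow(iii)\Leftrightarrow(iv)\Rightarrow(i)$. The first implication is immediate from the continuous embedding $L^{p,1}(\mu_d)\hookrightarrow L^{p,\omega}(\mu_d)$, valid because $\omega\ge 1$. For $(ii)\Rightarrow(iii)$ I would set $\psi:=\cB_d\phi\in\cS(\bbR_+)$ and test on the dilates $g_t(r):=t^d\psi(tr)$. Since $\cB_d g_t(\rho)=\phi(\rho/t)$, one change of variable gives
$$T_m g_t(\rho) = t^d\,\cB_d[\phi\,m(t\cdot)](t\rho),$$
and the homogeneities $\|g_t\|_{L^{p,1}(\mu_d)}=c\,t^{d(1-1/p)}$ and $\|T_m g_t\|_{L^{q,\sigma}(\mu_d)}=t^{d(1-1/q)}\|\cB_d[\phi m(t\cdot)]\|_{L^{q,\sigma}(\mu_d)}$ combined with the assumed boundedness force \eqref{besseld}.

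\noindent\textbf{The step $(iii)\Leftrightarrow(iv)$.} Here I would use the classical asymptotic
$$B_d(r) = r^{-(d-1)/2}\big[a_+ e^{ir}+a_- e^{-ir}\big]+O\big(r^{-(d+1)/2}\big),\quad r\ge 1,$$
together with smoothness of $B_d$ at the origin. Since $\phi$ has compact support in $\bbR^+$, the integrand of $\cB_d[\phi m(t\cdot)](\rho)=\int\phi(s)\,m(ts)\,B_d(s\rho)\,s^{d-1}\,ds$ is supported in a fixed $s$-annulus. For $\rho\gtrsim 1$, inserting the asymptotic factors out $\rho^{-(d-1)/2}$ times a one-dimensional Fourier transform of $\phi(s)s^{(d-1)/2}m(ts)$, which up to smooth weights is exactly $k_t(\rho)$; the remainder gains an extra $\rho^{-1}$ and is therefore controlled by the main term in the weighted Lorentz norm of (iv). For $\rho\lesssim 1$ both quantities in (iii) and (iv) are pointwise dominated by a uniform multiple of $\|m\phi(\cdot/t)\|_1$. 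Packaging these pointwise estimates through the relevant Lorentz norms gives the equivalence.

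\noindent\textbf{The main step $(iii)\Rightarrow(i)$.} Fix a smooth $\eta$ with $\supp\eta\subset[1/2,2]$ and $\sum_{j\in\bbZ}\eta(2^{-j}r)=1$ on $(0,\infty)$, and set $m_j(r):=\eta(2^{-j}r)m(r)$, so $T_m=\sum_j T_{m_j}$. The same scaling calculation used in the first paragraph translates (iii) into the single-scale control
$$\big\|\cB_d[m_j]\big\|_{L^{q,\sigma}(\mu_d)}\lesssim 2^{jd/p'}, \qquad j\in\bbZ,$$
and, via the Bessel asymptotic of the previous paragraph, into an analogous weighted-$L^{q,\sigma}$ bound for the one-dimensional kernel $\cF_\bbR^{-1}[\eta\,m(2^j\cdot)]$. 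The plan is then to decompose any radial $f\in L^{p,\omega}(\mu_d)$ into atoms $a_\ell$ supported in dyadic annuli $\{r\sim 2^\ell\}$ with $\|a_\ell\|_{L^p(\mu_d)}\lesssim 1$, to estimate each $T_{m_j}a_\ell$ using the single-scale bound plus the quasi-localization of the kernel on $(\bbR_+,\mu_d)$, and to sum in $(j,\ell)$.

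\noindent\textbf{Main obstacle.} The delicate point, and the reason for the restriction $p<2d/(d+1)$, is the off-diagonal control for $|j-\ell|$ large: to obtain a geometric decay factor that allows summation one must invoke a Stein--Tomas-type restriction inequality on $\bbR^d$, which transfers the $L^{q,\sigma}$ bound on the one-dimensional kernels $k_t$ into an $L^p\to L^{q,\sigma}$ operator bound for $T_{m_j}$ that decays exponentially in $|j-\ell|$. This is also where the range $p\le q<2$ and the flexibility of the target Lorentz index $\sigma$ enter: the single-scale estimate naturally produces $L^{q,\sigma}$ output, and the quasi-orthogonality of the pieces $T_{m_j}$ avoids any triangle-inequality loss in the target. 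Once this off-diagonal scheme is set up, summing over $j$ and $\ell$ yields (i) and closes the cycle.
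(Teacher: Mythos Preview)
Your treatment of the easy implications $(i)\Rightarrow(ii)\Rightarrow(iii)$ matches the paper, and your sketch of $(iii)\Leftrightarrow(iv)$ via Bessel asymptotics is in the right spirit (the paper proves only the direction $(iii)\Rightarrow(iv)$ and closes the cycle via $(iv)\Rightarrow(i)$, but your bidirectional version is plausible).

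The main step $(iv)\Rightarrow(i)$, however, has a genuine gap. Two issues:

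\emph{Wrong mechanism for the threshold.} You attribute the restriction $p<\tfrac{2d}{d+1}$ to a ``Stein--Tomas-type restriction inequality on $\bbR^d$''. That is not the source here. Stein--Tomas gives the range $p\le\tfrac{2(d+1)}{d+3}$, which is strictly smaller; moreover, $d$ in this theorem is a real parameter, so there is no ambient $\bbR^d$ to restrict in. In the paper the threshold arises from an elementary Hardy-type estimate: after rescaling, the off-diagonal piece where the output radius dominates the input radius is controlled by an integral against the weight $(1+s)^{-(d-1)/2}$, and one needs $\|(1+s)^{-(d-1)/2}\chi_{I_m}\|_{L^{p',1}(\mu_d)}$ to be summable in $m$. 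This holds exactly when $d(\tfrac1p-\tfrac12)-\tfrac12>0$, i.e.\ $p<\tfrac{2d}{d+1}$.

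\emph{The diagonal part is not a summation problem.} Your scheme ``decompose $f$ into annular atoms $a_\ell$, get geometric decay in $|j-\ell|$, sum'' overlooks the regime where the input and output spatial scales are comparable. For that part there is \emph{no} decay in $j$, and a plain triangle-inequality summation would diverge. The paper splits each $T^j$ into three pieces according to the relative size of output and input radii; the two genuinely off-diagonal pieces are handled by the Hardy-type bounds just described, while the comparable-scale piece $\{S_{j,n,i}\}_j$ is treated as a vector-valued Calder\'on--Zygmund singular integral (weak $(1,1)$ via a CZ decomposition of $\{f_j\}$, plus the trivial $L^2$ bound). The Littlewood--Paley square-function inequalities then reassemble the pieces. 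None of this appears in your outline, and without it the argument does not close.
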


Condition \eqref{global} is simpler when $q=\sigma$, and in this case we see
that  $T_m$ is bounded
from $L^p(\mu_d)$ to $L^q(\mu_d)$
(and in fact from $L^{p,q}(\mu_d)$ to $L^q(\mu_d)$)
if and only if
\Be \label{Lqmudcond}
\sup_{t>0} \,\,
t^{d(\frac 1p-\frac 1q)}\,
\Big(\int_{-\infty}^\infty \big|k_t(x)\big|^q (1+|x|)^{(d-1)(1-\frac {q}{2})}
dx\Big)^{\frac 1q}<\infty;
\Ee
here again  $1<p<\frac{2d}{d+1}$  and now $p\le q\le 2$ (for the case $q=2$
see  \S\ref{Lp2est}).

Theorem \ref{main} is an immediate consequence of Theorem
\ref{mainhankel}. If $K=\cF^{-1}_{\bbR^d}[m(|\cdot|)]$ and
$g(x)=f(|x|)$ then $\cT_K g(x)=T_mf (|x|)$, by \eqref{Fourierradial}, and
the condition \eqref{besseld} is equivalent with
\Be\label{besseldradial}\sup_{t>0}\, t^{d(1/p-1/q)}
\big\|\cF_{\bbR^d}^{-1}[\phi(|\cdot|)m(t|\cdot|)]
\big\|_{L^{q,\sigma}(\bbR^d)}<\infty.
\Ee
Alternatively, after rescaling,  one can express this condition  using the
homogeneous  Besov type  space $\dot{B}_{-d/{p'},\infty}(L^{q,\sigma})$.
Namely for  radial $K$
 (with  $\widehat K\in L^1_\loc$)
and  $\Phi_t:=\cF^{-1} [\phi(t|\cdot|)]$,
\begin{align}
\notag
 \big\|\cT_K\big\|_{
L^p_{\rm
rad}(\mathbb{R}^d)\to L^{q,\sigma}_{\rm rad}(\mathbb{R}^d)}\,
\approx\,
\|K\|_{\dot{B}_{- d/{p'},\infty}(L^{q,\sigma})}&
\\ \approx
\sup_{t>0} t^{-d/p'} \big\|\Phi_{1/t}
*K\big\|_{L^{q,\sigma}}.&
\label{besovequivalence}
\end{align}
Note that the expression on the right hand side becomes a norm only
after considering the quotient of the space of distributions
modulo polynomials; however the (necessary) assumption that
$\widehat K$ is locally  integrable
 excludes polynomials
(and even nonzero constants). As a special case
(using the more familiar notation when $q=\sigma$)
the operator $\mathcal{T}_K$ maps boundedly $
L^p_{\rm
rad}(\mathbb{R}^d)\to L^{q}_{\rm rad}(\mathbb{R}^d)
$ if and
only if $\widehat K$ is locally integrable  and
$K\in \dot{B}^q_{-d/{p'},\infty}$.

We remark  that   no characterizations for $p\neq 1$ seem to
have been observed before; however
almost sharp results on compactly supported  multipliers
on $L^p_\rad(L^2_\sph)$ spaces on $\bbR^d$,
are in \cite{ms-adv}, in the sense that the exponent
$(d-1)(1-p/2)$ in \eqref{Lqmudcond} is replaced by $(d-1)(1-p/2)+\eps$.
Arai \cite{arai} proved a similar result with $\eps$-loss for global
Hankel multipliers, essentially
by combining arguments in \cite{ms-adv} and \cite{se-tams}.
We also note  that the necessity of the condition  \eqref{besseld} is trivial,
 and the necessity of conditions related to
 \eqref{global} is known from
\cite{gt-ind}, \cite{se-toh}, and
  \cite{arai};
{\it cf.} also  \S\ref{easy}
for an elementary  proof of the implication
$(iii)\implies(iv)$ in  Theorem  \ref{mainhankel}.
Finally note that 
Theorem \ref{mainhankel}
can be combined 
with  transplantation theorems for nonmodified Hankel transforms 
(\cite{guy}, \cite{st-tr}, \cite{stempak}, \cite{nst})
 to derive results on some other weighted $L^p$ spaces.

We state two consequences of the above 
characterizations concerning the structure of multiplier spaces.
It is convenient to
define
$\fM_d^{p,q}$, for $1<p\le q\le 2$
as  the space  of all locally integrable 
functions $m$ for which $T_m$ extends to a
bounded operator from
$L^p(\mu_d)$ to $L^q(\mu_d)$, and the norm is given by the
operator norm of $T_m$. 


A first  implication of  Theorem \ref{mainhankel} is that
 local multiplier conditions imply global ones; we state the case for 
$p=q$. Namely for nontrivial $\phi\in C^\infty_c(\bbR_+)$ one has the following equivalence.
\begin{corollary}\label{gl-loccorr} For $d>1$, $1<p<
\frac{2d}{d+1}$,
\Be\label{gl-loc}
\|m\|_{\fM^{p,p}_d}\approx \sup_{t>0} \|\phi m(t\cdot)\|_{\fM^{p,p}_d}.
\Ee
\end{corollary}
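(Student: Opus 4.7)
Theorem~\ref{mainhankel} with $q=\sigma=p$ provides the key input
\[
\|m\|_{\fM_d^{p,p}}\;\approx\;\sup_{t>0}\bigl\|\cB_d[\phi\, m(t\cdot)]\bigr\|_{L^p(\mu_d)},
\]
so the corollary reduces to comparing $\sup_t\|\cB_d[\phi m(t\cdot)]\|_{L^p(\mu_d)}$ with $\sup_t\|\phi m(t\cdot)\|_{\fM_d^{p,p}}$ uniformly in $t>0$.

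For the direction $\sup_t\|\phi m(t\cdot)\|_{\fM_d^{p,p}}\lesssim \|m\|_{\fM_d^{p,p}}$, I would rely on three observations. First, the identity $T_{m_1m_2}=T_{m_1}T_{m_2}$ (valid since $\cB_d^2=\Id$ on $\cS(\bbR_+)$) turns $\fM_d^{p,p}$ into a Banach algebra under pointwise multiplication, hence $\|\phi m(t\cdot)\|_{\fM_d^{p,p}}\le \|\phi\|_{\fM_d^{p,p}}\|m(t\cdot)\|_{\fM_d^{p,p}}$. Second, the dilation operator $U_tf(r)=f(tr)$ satisfies $\cB_d U_t=t^{-d}U_{1/t}\cB_d$, which leads after a short computation to $T_{m(t\cdot)}=U_{1/t}T_m U_t$; since $\|U_s\|_{L^p(\mu_d)\to L^p(\mu_d)}=s^{-d/p}$ and the product of the two conjugating norms is $1$, we get dilation invariance $\|m(t\cdot)\|_{\fM_d^{p,p}}=\|m\|_{\fM_d^{p,p}}$. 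Third, $\phi\in\fM_d^{p,p}$: applying condition \eqref{besseld} to $\phi$ itself as the multiplier, the product $\phi\cdot\phi(t\cdot)$ vanishes for $t$ outside a compact range, while on that compact range it has uniformly bounded Schwartz seminorms, so $\cB_d[\phi\cdot\phi(t\cdot)]$ is uniformly Schwartz and therefore uniformly in $L^p(\mu_d)$.

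For the reverse inequality $\|m\|_{\fM_d^{p,p}}\lesssim \sup_t\|\phi m(t\cdot)\|_{\fM_d^{p,p}}$, the strategy is to test each local multiplier on a single fixed function. Pick $\psi\in C^\infty_c(\bbR_+)$ with $\psi\equiv 1$ on $\supp\phi$ and set $f_0:=\cB_d\psi\in\cS(\bbR_+)\subset L^p(\mu_d)$. Since $\cB_d^2=\Id$, one has $\cB_d f_0=\psi$, and because $\phi\,m(t\cdot)$ is supported where $\psi=1$,
\[
T_{\phi m(t\cdot)}f_0\;=\;\cB_d\bigl[\phi\,m(t\cdot)\cdot\psi\bigr]\;=\;\cB_d\bigl[\phi\,m(t\cdot)\bigr].
\]
This yields the uniform-in-$t$ bound $\|\cB_d[\phi m(t\cdot)]\|_{L^p(\mu_d)}\le \|f_0\|_{L^p(\mu_d)}\,\|\phi m(t\cdot)\|_{\fM_d^{p,p}}$, and invoking the characterization from Theorem~\ref{mainhankel} closes the loop.

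The main obstacle is essentially bookkeeping rather than a new estimate: the substantive analytic content has been absorbed into Theorem~\ref{mainhankel}, leaving only the algebra structure of $\fM_d^{p,p}$, an exact dilation-invariance identity, and the one-line test-function computation above.
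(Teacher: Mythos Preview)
Your argument is correct. The paper does not give an explicit proof of this corollary, treating it as an immediate consequence of Theorem~\ref{mainhankel}; you have filled in exactly the routine details one would expect---the Banach algebra and dilation-invariance observations for the trivial direction, and the single test-function computation (which is just the $(ii)\Rightarrow(iii)$ argument of \S\ref{easy} applied to $\phi m(t\cdot)$) for the substantive direction.

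One small remark: for Direction~1 you could equally well reapply the characterization \eqref{besseld} to $\phi m(t\cdot)$ and invoke Lemma~\ref{indepofphi} (the cutoffs $\phi(\cdot)\phi(s\cdot)$ are uniformly controlled for $s$ in the relevant compact range), which is closer in spirit to how the paper frames things. Your Banach-algebra route is cleaner and avoids checking that uniformity, so it is a reasonable alternative.
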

It is well known that the analogue of this corollary for $d=1$ and even
classes
of continuous
Fourier multipliers in $M^p$ on the real line is false, see   examples by Littman, McCarthy and Rivi\`ere \cite{lmr} and by Stein and Zygmund \cite{steinzyg}.


Another failing analogy to $M^{p}(\bbR)$ concerns the subject of
interpolation.
As a  straightforward consequence of the characterization we obtain an interpolation result
with respect to
the second complex
interpolation method  $[\cdot,\cdot]^\theta$,
introduced by Calder\'on (see \cite{cald}, and \cite{BL}, p.88).
In contrast, an extension of a result of Zafran (\cite{z}),
states that
the space $M^p(\bbR)$, $1<p<2$, is not an interpolation space for any pair $(M^{p_0},M^{p_1})$ with $p_0<p<p_1$, see
Appendix \S\ref{appendix}.

\begin{corollary} \label{interpolcor}
Suppose
$1<d_i<\infty$, $1<p_i<\frac{2d_i}{d_i+1}$,
 $p_i\le q_i\le 2$, for $i=0,1$, moreover that
$(1/p,1/q, d)=(1-\vth)(1/p_0,1/q_0,d_0)+\vth(1/p_1,1/q_1,d_1)$ with
 $0<\vth<1$. Then
\Be\label{interpol} [\fM^{p_0,q_0}_{d_0}, \fM^{p_1,q_1}_{d_1}]^\vth=
\fM^{p,q}_{d}.
\Ee
\end{corollary}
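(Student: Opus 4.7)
The plan is to exploit the characterization in Theorem~\ref{mainhankel}(iii), which expresses $\|m\|_{\fM^{p,q}_d}$ as an $L^\infty$-type norm in a scale variable $t$, together with the well-known compatibility of Calder\'on's \emph{second} complex interpolation method $[\cdot,\cdot]^\vth$ with $L^\infty$-valued spaces:
\begin{equation*}
[L^\infty(A_0),\, L^\infty(A_1)]^\vth \,=\, L^\infty\bigl([A_0, A_1]^\vth\bigr).
\end{equation*}
This property distinguishes $[\cdot,\cdot]^\vth$ from the first method $[\cdot,\cdot]_\vth$ and is the crucial ingredient that makes the corollary work; indeed the analogous statement for the first method fails in the real-line setting, as discussed in \S\ref{appendix}.

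More concretely, fix a nontrivial $\phi\in C^\infty_c(\bbR_+)$ satisfying a Calder\'on-type reproducing formula, giving maps $E_d(m)(t):=\cB_d[\phi\, m(t\cdot)]$ and a left inverse $R_d$ with $R_d\circ E_d=\mathrm{Id}$. By Theorem~\ref{mainhankel}(iii), $E_d$ identifies $\fM^{p,q}_d$ with a retract of the weighted $L^\infty$-valued space
\begin{equation*}
\cL^{p,q}_d \,:=\, L^\infty\bigl(dt/t;\, t^{d(1/p-1/q)};\, L^q(\mu_d)\bigr).
\end{equation*}
Since interpolation commutes with retracts, it suffices to identify $[\cL^{p_0,q_0}_{d_0},\cL^{p_1,q_1}_{d_1}]^\vth = \cL^{p,q}_d$. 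This splits into two stages: the outer $L^\infty$-level, handled by the commutation property above with the multiplicative combination of $t$-weights, and the inner level, handled by the Stein--Weiss theorem for complex interpolation of the weighted $L^q$-spaces $[L^{q_0}(\mu_{d_0}), L^{q_1}(\mu_{d_1})]^\vth$.

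The main technical obstacle is reconciling the weight exponents produced by the two stages with those of $\cL^{p,q}_d$: a direct computation shows that the naive composition leaves a cross-term of order $\vth(1-\vth)(d_0-d_1)\bigl[(1/p_1-1/q_1)-(1/p_0-1/q_0)\bigr]$ in the $t$-weight and an analogous term $\vth(1-\vth)(d_0-d_1)(1/q_0-1/q_1)$ in the $r$-measure, which vanishes only in degenerate cases ($d_0=d_1$ or $q_0=q_1$). These cross-terms are reconciled by the Hankel scaling identity
\begin{equation*}
\cB_d[\phi\, m(t\cdot)](\rho) \,=\, t^{-d}\,\cB_d\bigl[\phi(\cdot/t)\,m\bigr](\rho/t),
\end{equation*}
which couples the $t$- and $r$-variables and allows a dilation in one to absorb the mismatch in the other. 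Incorporating this combined rescaling into the embedding $E_d$ (equivalently, choosing a $z$-dependent test function $\phi$ in the analytic family realizing the second-method norm) makes the weights match, so that the retraction closes and \eqref{interpol} follows.
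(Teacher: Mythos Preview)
Your approach via condition (iii) of Theorem \ref{mainhankel} has a structural problem that the paper sidesteps by working with condition (iv) instead. A retract argument for interpolation needs a \emph{single} pair of maps bounded on both endpoint spaces. Your embedding $E_d(m)(t)=\cB_d[\phi\,m(t\cdot)]$ depends on $d$ through the Hankel transform; when $d_0\neq d_1$ you have two different maps $E_{d_0}$, $E_{d_1}$, and there is no reason for $E_{d_0}$ to send $\fM^{p_1,q_1}_{d_1}$ boundedly into $\cL^{p_1,q_1}_{d_1}$ (the kernels $B_{d_0}$ and $B_{d_1}$ are genuinely different). The paper's proof is essentially one line because condition (iv) recasts everything via the \emph{one-dimensional} Fourier transform: the retract maps $\fA$, $\fB$ of \eqref{Adef}--\eqref{Bdef} do not involve $d$ at all, and the dependence on $(d,p,q)$ is pushed entirely into the scalar parameters $(a,b)$ of the target space $\floc(q,a,b)$. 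Lemma \ref{interpolfloc} then applies directly with $a_i=(d_i-1)(1/q_i-1/2)$, $b_i=d_i(1/p_i-1/q_i)$.

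Your proposed fix for the cross-term does not work either. By Lemma \ref{indepofphi} the norm defining $\floc$ is, up to fixed constants, independent of the cutoff $\phi$; varying $\phi$ analytically in $z$ therefore cannot shift which weighted space one lands in. The Hankel dilation identity you quote simply rewrites condition (iii) in the form \eqref{besovequivalence}; it converts the $t$-power $t^{d(1/p-1/q)}$ into $t^{-d/p'}$ but does not introduce any mechanism that couples the $t$-weight to the $r$-measure in a way that would cancel a term of order $\vth(1-\vth)(d_0-d_1)(1/q_0-1/q_1)$. In short, the rescaling moves the exponents around without changing their affine-versus-bilinear mismatch. If you want to salvage the argument, switch to condition (iv) so that the retract is $d$-independent; that is what the paper does.
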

This result follows from
 interpolation  of certain
Fourier-localized versions of weighted $L^p$ spaces (which are defined by
\eqref{global}),  see  Lemma \ref{interpolfloc} below.
For  a related result on real interpolation see \S\ref{compmult}.

Finally by standard arguments using H\"older's inequality and Plancherel's theorem  condition \eqref{global} implies the
known sufficient criteria
of H\"ormander type (\cite {gt-ma}), which are formulated
 using localized $L^2$-Sobolev spaces; these were
termed $S(2,\alpha)$ in \cite{connschw} and
 $WBV_{2,\alpha}$ (with $\alpha>1/2$) in \cite{gt-studia}.
The following endpoint
bounds in terms of
localized versions of Besov spaces
seem to be new; it is an optimal estimate within the class of
$L^2$-smoothness assumptions.
Recall $\|g\|_{B^2_{a,q}}\approx (\sum_{k=0}^\infty
2^{k aq}\|\widehat g\|_{L^2(\cI_k)}^q)^{1/q}$ where $\cI_0=[-1,1]$ and
$\cI_k=\{\xi\in \bbR: 2^{k-1}\le |\xi|\le 2^k\}$, for
$k>1$.

\begin{corollary} \label{besovcor}
For $1<d<\infty$, $1<p<\frac{2d}{d+1}$, $p\le q\le 2$,
\Be\label{besov}
\|m\|_{\fM^{p,q}_d} \lc \sup_{t>0}\, t^{d(\frac{1}{p}-\frac {1}{q})}\|\phi m(t\cdot)\|_{B^2_{a,q}}, \quad a=d(\frac 1q-\frac 12).
\Ee
\end{corollary}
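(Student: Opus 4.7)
The plan is to reduce via Theorem \ref{mainhankel} to the simpler weighted Fourier estimate \eqref{Lqmudcond} (the case $\sigma=q$), and then to verify that estimate directly from the Besov assumption by a dyadic decomposition combined with H\"older's inequality. Setting $g_t(r):=\phi(r)m(tr)$ and $k_t:=\cF_\bbR^{-1}[g_t]$, I would first observe that, after extracting the common factor $t^{d(1/p-1/q)}$ from both sides, the task reduces to the uniform-in-$t$ estimate
\[
\Big(\int_\bbR |k_t(x)|^q (1+|x|)^{(d-1)(1-q/2)}\,dx\Big)^{1/q}\;\lc\; \|g_t\|_{B^2_{a,q}},\qquad a=d(\tfrac 1q-\tfrac 12).
\]

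Next I would split $\bbR = \bigcup_{k\ge 0}\cI_k$ into the symmetric dyadic shells from the definition of $\|\cdot\|_{B^2_{a,q}}$. On $\cI_k$ the weight $(1+|x|)^{(d-1)(1-q/2)}$ is bounded by a constant times $2^{k(d-1)(1-q/2)}$; since $q\le 2$, H\"older's inequality yields
\[
\int_{\cI_k}|k_t|^q\,dx\;\le\; |\cI_k|^{1-q/2}\|k_t\|_{L^2(\cI_k)}^q\;\lc\; 2^{k(1-q/2)}\|k_t\|_{L^2(\cI_k)}^q.
\]
Multiplying these two contributions, the shell integral is bounded by
$2^{k[(d-1)(1-q/2)+(1-q/2)]}\|k_t\|_{L^2(\cI_k)}^q = 2^{kaq}\|k_t\|_{L^2(\cI_k)}^q$,
since $aq=d(1-q/2)$, which is exactly the Besov weight. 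Summing in $k$ finishes the bound, using that $\|k_t\|_{L^2(\cI_k)}$ agrees with $(2\pi)^{-1}\|\widehat{g_t}\|_{L^2(\cI_k)}$ because $\cI_k$ is symmetric about the origin, so the reflection relating $\cF_\bbR^{-1}$ to $\,\widehat{\cdot}\,$ is harmless.

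There is no substantial obstacle here: the exponent $a=d(1/q-1/2)$ was chosen precisely so that $(d-1)(1-q/2)+(1-q/2)=aq$, making the arithmetic close without any loss. The real content is Theorem \ref{mainhankel}; the Corollary is then the cheapest application of the sufficient direction in that equivalence, requiring no smoothness hypothesis on $m$ beyond $L^2$-localized frequency size. The fact that the estimate is optimal within $L^2$-smoothness classes, as claimed in the statement, would be seen by testing against $m(\rho)=\phi_0(\rho)e^{i\rho s}$ for large $s$ and matching the $s$-dependence on both sides.
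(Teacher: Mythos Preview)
Your argument is correct and essentially identical to the paper's own proof: both reduce via Theorem \ref{mainhankel} to condition \eqref{Lqmudcond}, then split into the dyadic shells $\cI_k$, bound the weight by $2^{k(d-1)(1-q/2)}$ on each shell, and apply H\"older's inequality to pass from $L^q(\cI_k)$ to $L^2(\cI_k)$, with the exponents combining to give exactly $aq=d(1-q/2)$. Your remark about the symmetry of $\cI_k$ handling the reflection between $\cF^{-1}_\bbR$ and $\widehat{\;\cdot\;}$ is a careful detail the paper leaves implicit.
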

Here, and in what follows, the notation $\lc$  indicates that in the inequality an unspecified constant is involved which may depend on $d,p,q$.
Since the space $B^2_{1/2, p}$ contains unbounded functions for $p>1$ the
corollary does not extend to the endpoint $p=q=2d/(d+1)$.

\noi{\it This paper.}
In \S\ref{prel} we gather various facts on
Bessel functions, Littlewood-Paley inequalities,
interpolation
and elementary convolution inequalities on weighted spaces, needed later in the paper.
In \S\ref{kernelestimates}
we derive some pointwise bounds for the
kernels  of  multiplier transformations, assuming that
 the multipliers are compactly supported in $(1/2,2)$.
In \S\ref{easy} we prove the necessity of the conditions, namely the
implications ${\rm(i)}\!\implies\!{\rm(ii)}
\!\implies\!{\rm(iii)} \!\implies\!{\rm(iv)}$ of Theorem
\ref{mainhankel}. The proof of the main implication
${\rm(iv)}\!\implies\!{\rm(i)}$ is contained in
sections \ref{sufficiency}-\ref{interpolsect}.
In \S\ref{sufficiency} we discuss the basic decomposition into Hardy type and singular integral operators.
The crucial estimate for the main Hardy-type operator is proved in
\S\ref{Hardybounds}, and \S\ref{sing-hardy} contains estimates for
better behaved operators (in particular singular integrals) for which we do not need the full strength of assumption \eqref{global}.
In \S\ref{Lp2est} we give the straightforward proof of the
$L^p\to L^2$ bounds and then conclude in
\S\ref{interpolsect}
the proof of  the implication
${\rm(iv)}\!\implies\!{\rm(i)}$ by an interpolation.
In \S\ref{compmult} we
 give the  short proofs of the  Corollaries and briefly discuss a
further result on real interpolation and an improved   version
of our results for
multipliers which are compactly
supported away from the origin.
 Some open problems are mentioned in \S\ref{open}. An appendix
(\S\ref{appendix}) is included with the above mentioned non-interpolation results for
Fourier multipliers.

\section{Preliminaries} \label{prel}


\medskip
\noi{\bf Asymptotics for Bessel functions.} In order to relate the
Hankel transforms of multipliers to the one-dimensional Fourier
transform we need to use standard  asymptotics for Bessel
functions (see \cite{erdelyi}, 7.13.1(3)), namely for $|x|\ge 1$,
\begin{multline*}
B_d (x)=\sum_{\nu=0}^M c_{\nu,d}
\cos (x-\tfrac{d-1}{4} \pi) x^{-2\nu- \frac{d-1}2}
\\+\sum_{\nu=0}^M \widetilde c_{\nu,d}
\sin(x-\tfrac{d-1}{4} \pi) x^{-2\nu- \frac{d+1}2}+ x^{-M}
\widetilde E_{M,d}(x)
\end{multline*}
with $c_{0,d}=(2/\pi)^{1/2}$,
and the derivatives of
$\widetilde E_{M,d}$ are bounded.
Thus one may also write down expansions for the derivatives and, after
writing the cosine and sine terms as combinations of
exponentials and applying the previous formula with  $M$ replaced by
$M+k$ one  also gets, for $|x|\ge 1$,
\Be
\label{besselasympt}
B_d^{(k)} (x)=\sum_{\nu=0}^M (c_{\nu,k,d}^+ e^{ix}+ c_{\nu,k,d}^- e^{-ix})
 x^{-\nu- \frac{d-1}2}
+ x^{-M} E_{M,k,d}(x)
\Ee where $c_{0,0,d}^\pm= (2\pi)^{-1/2} e^{\mp i\frac{d-1}{4}\pi}$
and the  $E_{M,k,d}$ have bounded derivatives:
\Be \label{derivativesofE}
 |E_{M,k,d}^{(k_1)}(x)|\le C(M,k,k_1,d).
\Ee

\noi{\bf Littlewood-Paley inequalities.}
Let $\eta\in C^\infty(\bbR_+)$ with compact support away from $0$.
Let $L_j f= \cB_d[\eta(2^{-j}\cdot) \cB_d f]$.
Then for $1<p<\infty$ there are  the inequalities
\begin{align}
\label{LP1}
&\Big\|\Big(\sum_{j\in \bbZ}|L_j f|^2 \Big)^{1/2}\Big\|_{L^{p}(\mu_d)}
\le C_p \|f\|_{L^{p}(\mu_d)},
\\
\label{LP2}
&\Big\|\sum_{j\in \bbZ}L_j f_j\Big\|_{L^p(\mu_d)} \le C_{p}' 
\Big\|\Big(\sum_{j\in \bbZ}|f_j|^2\Big)^{1/2}\Big\|_{L^{p}(\mu_d)};
\end{align}
indeed \eqref{LP1} and \eqref{LP2} are dual to each other with
$C_{p}'=C_{p'}$, $1/p+1/p'=1$.
By the real
(Lions-Peetre) interpolation method the spaces $L^p(\mu_d)$ can be replaced by
$L^{p,\sigma}(\mu_d)$, for any  $\sigma$.

For the proof of \eqref{LP1}, \eqref{LP2} we note that the operators
\[ f\mapsto  \sum_j \pm L_j f\]
are bounded on $L^p(\mu_d)$, $1<p<\infty$, with operator norm independent of the choice of signs $\pm$. This follows for example by (a non-sharp version of)
the H\"ormander type multiplier criterion for modified Hankel transforms in
Gasper and Trebels
\cite{gt-ma}; for the case of integer $d$ one could simply use standard results in $\Bbb R^d$ specialized to radial functions (\cite{stein-si}).
Now the inequalities  \eqref{LP1}, \eqref{LP2}
follow by the usual
 averaging argument using Rademacher functions
(see \cite{stein-si}, ch. IV, \S5.2), and a duality argument.

\medskip

\noi{\bf Remarks on Lorentz spaces.} We assume that $\Omega$ is a measure
space with given $\sigma$-algebra and underlying measure $\mu$.
We refer to a thorough discussion of Lorentz spaces to
\cite{stw}. There the definition of $L^{q,\sigma}$  is given in terms of rearrangements of
$f$ and it is shown that this definition is
equivalent to a norm when
$1<q<\infty$, $1\le \sigma\le \infty$.
Instead of the rearrangement function one can also use the distribution function and it is easy to check (on simple functions) that an equivalent
quasi-norm on $L^{q,\sigma}$ is given by
\Be\label{quasinormlor}
\|f\|_{L^{q,\sigma}}\approx \Big(\sum_{\ell=-\infty}^\infty 2^{\ell\sigma}
\big[ \mu\big(\{ x\in\Omega: |f(x)|>2^\ell\}\big)\big]^{\sigma/q}\Big)^{1/\sigma}
\Ee
(with the natural $\ell^\infty$ analogue for $\sigma=\infty$).
For the manipulation of vector-valued functions we shall need the following inequality.
\begin{lemma}\label{vectorlor}  Let $1<q<r$, $1\le \sigma\le \infty$ and
let $\{F_j\}$ be a sequence of measurable functions on $\Omega$. Then
\Be \Big\|\Big(\sum_j|F_j|^r\Big)^{1/r}\Big\|_{L^{q,\sigma}}
\le C(q,\sigma,r) \Big(\sum_j \big\|F_j\big\|^\om_{L^{q,\sigma}}\Big)^{1/\om},
\quad \omega=\min\{\sigma, q\}.
\Ee
\end{lemma}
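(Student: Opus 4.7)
Plan: Since $\omega = \min\{\sigma,q\} \le q < r$, the pointwise scalar inequality $(\sum_j a_j^r)^{1/r} \le (\sum_j a_j^\omega)^{1/\omega}$ (valid since $r \ge \omega$) lets us replace $r$ by $\omega$ on the left-hand side. It thus suffices to establish the $\omega$-convexity estimate
\[ \Big\|\Big(\sum_j|F_j|^\omega\Big)^{1/\omega}\Big\|_{L^{q,\sigma}} \le C \Big(\sum_j\|F_j\|_{L^{q,\sigma}}^\omega\Big)^{1/\omega} \]
of the Lorentz space. Raising both sides to the $\omega$-th power and using the identity $\|g\|_{L^{q,\sigma}}^\omega = \|g^\omega\|_{L^{q/\omega,\sigma/\omega}}$ (valid for $g \ge 0$ because $(g^\omega)^* = (g^*)^\omega$), this becomes the triangle-type inequality
\[ \Big\|\sum_j G_j\Big\|_{L^{q/\omega,\sigma/\omega}} \le C\sum_j\|G_j\|_{L^{q/\omega,\sigma/\omega}} \]
for the nonnegative functions $G_j = |F_j|^\omega$. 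Since $\omega = \min\{q,\sigma\}$, one of the indices $q/\omega$, $\sigma/\omega$ equals $1$.

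When $\omega = \sigma$, we have $\sigma/\omega = 1$ and $q/\omega \ge 1$, so $L^{q/\sigma,1}$ is a Banach space (via Calder\'on's $f^{**}$-norm when $q/\sigma>1$) and the ordinary triangle inequality completes the proof. The remaining case $\omega = q \le \sigma$ is the principal technical obstacle, since the target $L^{1,\sigma/q}$ is only quasi-normable when $\sigma>q$. I would handle this by real interpolation between the two endpoints $\sigma = q$ (the $L^q$-bound, immediate from the pointwise $\ell^r \hookrightarrow \ell^q$) and $\sigma = \infty$ (the weak-type bound, proved by estimating $\mu(\{\sum_j|F_j|^\omega > s\})$ via a dyadic decomposition of each $|F_j|$ combined with the hypothesis $r>q$); since $L^{q,\sigma}$ for $q \le \sigma \le \infty$ is the real interpolation family between $L^q$ and $L^{q,\infty}$, this yields the required bound for intermediate $\sigma$.
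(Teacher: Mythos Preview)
Your reduction via $\ell^r\hookrightarrow\ell^\omega$ and the power trick $\|g\|_{L^{q,\sigma}}^\omega=\|g^\omega\|_{L^{q/\omega,\sigma/\omega}}$ is clean, and the case $\omega=\sigma\le q$ is correctly dispatched by the triangle inequality in the Banach space $L^{q/\sigma,1}$.

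The gap is in the case $\omega=q<\sigma$. Your proposed fix relies on the assertion that ``$L^{q,\sigma}$ for $q\le\sigma\le\infty$ is the real interpolation family between $L^q$ and $L^{q,\infty}$.'' This is not correct: the standard identity $[L^{p_0,\sigma_0},L^{p_1,\sigma_1}]_{\theta,\tau}=L^{p,\tau}$ requires $p_0\neq p_1$, and when the primary indices coincide the $K$-functional for the pair $(L^q,L^{q,\infty})$ does not produce the intermediate Lorentz scale. So even granting your weak-type endpoint, the interpolation step does not go through, and the inequality for $q<\sigma<\infty$ remains unproved.

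The paper avoids this by passing to the product space $\Omega\times\bbZ$. It first shows
\[
\Big\|\Big(\sum_j|F_j|^r\Big)^{1/r}\Big\|_{L^{q,\sigma}(\Omega)}\lc\|H\|_{L^{q,\sigma}(\Omega\times\bbZ)},\qquad H(x,j)=F_j(x),
\]
by interpolating the trivial $L^{q_0}$ and $L^{q_1}$ bounds (with $q_0<q<q_1<r$, so the primary index \emph{does} vary). It then controls $\|H\|_{L^{q,\sigma}(\Omega\times\bbZ)}$ directly from the dyadic distribution-function quasinorm: for $\sigma\ge q$ one has
\[
\sum_\ell 2^{\ell\sigma}\Big(\sum_j\mu(\{|F_j|>2^\ell\})\Big)^{\sigma/q}
\le\Big(\sum_j\Big(\sum_\ell 2^{\ell\sigma}\mu(\{|F_j|>2^\ell\})^{\sigma/q}\Big)^{q/\sigma}\Big)^{\sigma/q}
\]
by Minkowski in $\ell^{\sigma/q}$, which is exactly $(\sum_j\|F_j\|_{L^{q,\sigma}}^q)^{\sigma/q}$. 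This two-line computation replaces your interpolation step and is what you are missing.
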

\begin{proof} Consider measurable functions $H$ on $\Omega \times \bbZ$.
We first claim that for
$1<q<r$, $1\le \sigma\le \infty$
\Be \label{productlor}
\Big\|\Big(\sum_j|H(\cdot, j)|^r\Big)^{1/r}
\Big\|_{L^{q,\sigma}(\Omega)}
\le c(q,\sigma,r) \|H\|_{L^{q,\sigma}(\Omega\times \bbZ)}.
\Ee
For the case $q=\sigma$ this follows by applying the imbedding
$\ell^q\hookrightarrow \ell^r$ and then Fubini's theorem
(interchanging a sum and an integral). For arbitrary $\sigma$ it
follows by applying the real
 method of interpolation.
Now we apply  \eqref{quasinormlor} to the right hand side of
\eqref{productlor} and estimate
for $\sigma\ge q$
\begin{align*}
&\|H\|_{L^{q,\sigma}(\Omega\times \bbZ)}
\lc
\Big(\sum_\ell 2^{\ell \sigma}
\Big(\sum_j \mu\big( \{x:|H(x,j)|>2^{\ell}\}\big)
\Big)^{\sigma/q} \Big)^{1/\sigma}\lc
\\&
\Big(\sum_j \Big(\sum_{\ell} 2^{\ell\sigma}
\mu\big( \{x:|H(x,j)|>2^{\ell}\}\big)^{\sigma/q} \Big)^{q/\sigma}\Big)^{1/q}
 \lc\Big(\sum_j\big\|H(\cdot,j)\big\|_{L^{q,\sigma}}^q\Big)^{1/q};
\end{align*}
here we have used Minkowski's  inequality for the sequence space
$\ell^{\sigma/q}$.
If $ \sigma<q$ we use instead the imbedding $\ell^{\sigma/q}\subset \ell^1$
and
estimate $ \|H\|_{L^{q,\sigma}(\Omega\times \bbZ)}$ by
\begin{align*}
\Big(\sum_\ell 2^{\ell \sigma}
\sum_j \big(\mu\big( \{x:|H(x,j)|>2^{\ell}\}\big)\big)^{\sigma/q}
\Big)^{1/\sigma}
\approx
 \Big(\sum_j\big\|H(\cdot,j)\big\|_{L^{q,\sigma}}^\sigma\Big)^{1/\sigma}.
\end{align*}
\end{proof}

\medskip

\noi{\bf Elementary inequalities for weighted norms.} To handle expressions such as
\eqref{global} we need some elementary
inequalities on convolutions and dilations.

\begin{lemma}\label{straightfconv}
Let $a\ge 0$, and
$\ga>a+1$. Suppose that $g$, $\zeta$ are Lebesgue measurable on $\bbR$
 and
$\zeta$
 satisfies
\Be |\zeta(x)|\le C_1 (1+|x|)^{-\gamma}.
\label{zetaest}\Ee
Then for $q_1\ge q\ge 1$
\Be\label{convwithSchw}
\Big(\int |g*\zeta(x)|^{q_1} (1+|x|)^{aq_1} dx\Big)^{1/q_1}\lc C_1
\Big(\int |g(x)|^q (1+|x|)^{aq} dx\Big)^{1/q}.
\Ee
Also
\Be\label{effectofdilation}
\Big(\int |g(tx)|^q (1+|x|)^{aq} dx\Big)^{1/q}
\le t^{-1/q}\max\{1, t^{-a}\}\Big(\int |g(x)|^q (1+|x|)^{aq} dx\Big)^{1/q}.
\Ee
\end{lemma}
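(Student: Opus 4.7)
For the convolution inequality \eqref{convwithSchw}, the plan is to use the sub-multiplicative estimate
\[
(1+|x|)^a \le (1+|y|)^a (1+|x-y|)^a, \qquad a\ge 0,
\]
which follows immediately from $(1+|y|)(1+|x-y|) \ge 1+|y|+|x-y| \ge 1+|x|$. Applying this inside the convolution integral gives
\[
(1+|x|)^a |g*\zeta(x)| \le \bigl([|g|(1+|\cdot|)^a] * [|\zeta|(1+|\cdot|)^a]\bigr)(x),
\]
so the left-hand side of \eqref{convwithSchw} is bounded by $\|G*Z\|_{L^{q_1}}$ with $G = |g|(1+|\cdot|)^a$ and $Z = |\zeta|(1+|\cdot|)^a$. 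Next I would invoke Young's inequality with the exponent $r$ defined by $1+1/q_1 = 1/q + 1/r$; the hypothesis $q_1\ge q\ge 1$ guarantees $r\ge 1$. It only remains to show $Z\in L^r$: from \eqref{zetaest} we have $Z(x) \le C_1 (1+|x|)^{a-\gamma}$, and since $r\ge 1$ and $\gamma - a > 1$, we get $r(\gamma - a) > 1$, hence $\|Z\|_{L^r}\lesssim C_1$.

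For \eqref{effectofdilation} the approach is a direct change of variables $y = tx$, giving
\[
\int |g(tx)|^q (1+|x|)^{aq} dx \;=\; \frac1t \int |g(y)|^q \Bigl(1+\tfrac{|y|}{t}\Bigr)^{aq} dy.
\]
Then I would treat the two regimes separately: if $t\ge 1$, then $1+|y|/t\le 1+|y|$, producing the factor $t^{-1/q}$; if $t<1$, then $1+|y|/t \le (1+|y|)/t$, and raising to the power $aq$ yields an extra factor $t^{-aq}$, which combines with $t^{-1}$ to give $t^{-1/q}t^{-a}$. Together these cases produce $t^{-1/q}\max\{1,t^{-a}\}$.

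Neither estimate presents any real obstacle; the only thing to watch is the admissibility range for Young's inequality in the first part and the careful constant-chasing with $(1+|y|/t)$ in the second. The whole lemma is essentially a bookkeeping exercise designed to handle the weighted norms appearing in \eqref{global} under the dilation $m \mapsto m(t\cdot)$ and under smoothing by Schwartz-tail kernels.
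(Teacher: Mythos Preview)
Your proof is correct. The key ingredient, the sub-multiplicative bound $(1+|x|)^a\le(1+|y|)^a(1+|x-y|)^a$, is exactly what the paper uses as well, and your treatment of \eqref{effectofdilation} is identical to the paper's.

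For \eqref{convwithSchw} there is a minor methodological difference worth noting. The paper proves the two endpoint cases separately --- $q_1=q$ via Minkowski's inequality in the $y$-variable, and $q_1=\infty$ via H\"older --- and then interpolates to cover the intermediate range. Your argument instead absorbs the weight into the convolution and applies Young's inequality $\|G*Z\|_{q_1}\le\|G\|_q\|Z\|_r$ in one stroke, handling all $q\le q_1$ simultaneously. This is slightly more streamlined; the only thing to check (which you did) is that $Z=|\zeta|(1+|\cdot|)^a$ lies in $L^r$ for the Young exponent $r$, and the hypothesis $\gamma>a+1$ together with $r\ge 1$ gives exactly that. Both routes are elementary and rest on the same weight inequality, so the difference is one of packaging rather than substance.
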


\begin{proof}
For $q=q_1$ the left hand side of \eqref{convwithSchw}
is dominated by a constant times 
\begin{align*}&\int(1+|y|)^{-\ga}\Big(\int|g(x-y)|^q (1+|x|)^{aq} dx\Big)^{1/q}dy
\\&\le\int(1+|y|)^{-\ga+a}dy\,\Big(\int|g(x)|^q (1+|x|)^{aq} dx\Big)^{1/q}
\end{align*}
where we have used $1+|x|\le (1+|x-y|) (1+|y|)$.
The integral is finite since $\ga>a+1$.

The analogue of \eqref{convwithSchw} for
 $q_1=\infty$  is also valid; we estimate (assuming momentarily $q>1$)
\begin{multline*}
|g*\zeta(x)|(1+|x|)^a\lc   \\
\Big(\int|g(x-y)(1+|x-y|)^{a}|^q dy\Big)^{1/q}
\Big(\int \frac{(1+|y|)^{-\gamma q'}(1+|x|)^{aq'}}
{(1+|x-y|)^{aq'} }dy\Big)^{1/q'}
\end{multline*}
where the first term is the desired expression on the right hand side of
\eqref{convwithSchw} and the second term is
$\lc(\int (1+|y|)^{(a-\gamma) q'}dy)^{1/q'}$, hence finite.
A similar argument holds for $q=1$.
We have now proved the asserted  bound for $q_1=\infty$  and $q_1=q$ and the intermediate cases follow by interpolation.

Inequality \eqref{effectofdilation} follows from
$(1+|x|/t)\le \max\{t^{-1},1\}(1+|x|)$ and
a change of variable.
\end{proof}

We shall need the following  Lorentz space variant
of Lemma  \ref{straightfconv}
which will be used repeatedly.

\begin{lemma}\label{straightfconvLor}
Let  $\alpha> \beta q\ge 0$,
$1< q<\infty$, $1\le \sigma\le \infty$, and
let $d\nu_\alpha= (1+|x|)^{\alpha}dx$ (as a measure on $\bbR$).
Suppose that
$\zeta$
 satisfies \eqref{zetaest} for some $\gamma> 1-\beta+\alpha /q $.
Then
\Be\label{convwithSchwLor}
\Big\|\frac{g*\zeta}{(1+|\cdot|)^\beta}\Big\|_{L^{q,\sigma}(\nu_\alpha)}
\lc \Big\|\frac{g}{(1+|\cdot|)^\beta}\Big\|_{L^{q,\sigma}(\nu_\alpha)}
\Ee
and
\Be\label{effectofdilationLor}
\Big\|\frac{g(t\cdot)}{(1+|\cdot|)^\beta}\Big\|_{L^{q,\sigma}(\nu_\alpha)}
\lc t^{-1/q}\max\{1, t^{-\alpha/q+\beta}
\Big\|\frac{g}{(1+|\cdot|)^\beta}
\Big\|_{L^{q,\sigma}(\nu_\alpha)}
\Ee
\end{lemma}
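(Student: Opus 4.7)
The plan is to prove both inequalities first in the Lebesgue case $\sigma=q$ by reducing them to Lemma \ref{straightfconv} and a change of variables, and then to upgrade to arbitrary $\sigma$ by real interpolation in the exponent $q$ with the measure $\nu_\alpha$ kept fixed. For $\sigma=r$ the weighted norm collapses to
\[\Big\|\frac{g}{(1+|\cdot|)^\beta}\Big\|_{L^r(\nu_\alpha)}^r=\int|g(x)|^r(1+|x|)^{\alpha-\beta r}\,dx,\]
so \eqref{convwithSchwLor} with $\sigma=r$ is exactly Lemma \ref{straightfconv} applied with $a=\alpha/r-\beta\ge 0$; the hypothesis $\gamma>1-\beta+\alpha/r$ is precisely the condition $\gamma>a+1$ there. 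For \eqref{effectofdilationLor} with $\sigma=r$, I would substitute $y=tx$ in the resulting integral and use $1+|y|/t\le\max\{1,t^{-1}\}(1+|y|)$ (legitimate because $\alpha-\beta r>0$) to arrive at the factor $t^{-1/r}\max\{1,t^{\beta-\alpha/r}\}$. Both bounds are thus available for every $r$ in a small open neighborhood of $q$.

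To pass to general $\sigma$, I would view the inequalities as bounds on the linear operators
\[\mathcal{S}_\zeta F(x)=\frac{1}{(1+|x|)^\beta}\int(1+|y|)^\beta F(y)\zeta(x-y)\,dy,\qquad \mathcal{R}_tF(x)=\Big(\frac{1+|tx|}{1+|x|}\Big)^\beta F(tx),\]
defined so that, taking $F=g/(1+|\cdot|)^\beta$, one has $\mathcal{S}_\zeta F=(g*\zeta)/(1+|\cdot|)^\beta$ and $\mathcal{R}_tF=g(t\cdot)/(1+|\cdot|)^\beta$. Step 1, rewritten in these terms, shows that $\mathcal{S}_\zeta$ and $\mathcal{R}_t$ map $L^r(\nu_\alpha)$ into itself with the stated norms for every $r$ in a neighborhood of $q$. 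Choosing $q_0<q<q_1$ in that neighborhood so that both $\alpha>\beta q_1$ and $\gamma>1-\beta+\alpha/q_0$ still hold (possible by continuity), the standard interpolation identity
\[L^{q,\sigma}(\nu_\alpha)=\bigl(L^{q_0}(\nu_\alpha),L^{q_1}(\nu_\alpha)\bigr)_{\theta,\sigma},\qquad \frac1q=\frac{1-\theta}{q_0}+\frac{\theta}{q_1},\]
upgrades these endpoint bounds to the required $L^{q,\sigma}(\nu_\alpha)$ bounds, with operator norm dominated by $C_{q_0}^{1-\theta}C_{q_1}^\theta$.

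The only nontrivial bookkeeping is verifying that the interpolated constant for $\mathcal{R}_t$ equals the stated $t^{-1/q}\max\{1,t^{\beta-\alpha/q}\}$. Writing $C_{q_i}(t)=t^{-1/q_i}\max\{1,t^{\beta-\alpha/q_i}\}$, for $t\ge 1$ one has $\beta-\alpha/q_i<0$ (by the choice $\alpha>\beta q_1$), hence $C_{q_i}(t)=t^{-1/q_i}$ and $C_{q_0}^{1-\theta}C_{q_1}^\theta=t^{-1/q}$; for $t<1$ one has $C_{q_i}(t)=t^{\beta-(1+\alpha)/q_i}$, whose interpolated product is $t^{\beta-(1+\alpha)/q}=t^{-1/q}t^{\beta-\alpha/q}$. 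Both cases match. For $\mathcal{S}_\zeta$ the endpoint norms are bounded uniformly in $t$, so no such check is needed. The entire substantive content sits in Lemma \ref{straightfconv}; the rest is interpolation and exponent algebra.
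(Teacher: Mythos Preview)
Your proposal is correct and follows essentially the same approach as the paper: reduce to the case $\sigma=q$ via the identification $a=\alpha/q-\beta$ in Lemma \ref{straightfconv}, then upgrade to general $\sigma$ by real interpolation, exploiting the strict inequalities on $\gamma$ and on $\alpha-\beta q$ to choose nearby exponents $q_0,q_1$. The paper encodes your $\mathcal{S}_\zeta$ as $\cM_{-\beta}S_\zeta\cM_\beta$ and dismisses the dilation estimate with ``is similar''; your explicit bookkeeping of the interpolated constant for $\mathcal{R}_t$ is more careful than the paper but not substantively different.
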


\begin{proof}  Define
$\cM_\beta f:=(1+|x|)^{\beta} f(x)$ and let $S_\zeta f(x)=\zeta*f$. Then the assertion is equivalent with the claim that
$\cM_{-\beta} S_\zeta
\cM_\beta$
is bounded on  $L^{q,\sigma}(\nu_\alpha)$. Since $1<q<\infty$ and restriction on $\gamma$ also involves a strict inequality the general Lorentz space estimate follows from the case $q=\sigma$ by real interpolation.
The $L^q(\nu_\alpha)$ boundedness of
$\cM_{-\beta} S_\zeta
\cM_\beta$
 is in turn equivalent to the inequality \eqref{convwithSchw}
for the choice $q=q_1$ and $aq= \alpha-\beta q$. We may apply
\eqref{convwithSchw}  since
$\gamma>a+1=1-\beta+\alpha/q$. The proof of \eqref{effectofdilationLor}
is similar.
\end{proof}

\noi{\bf Independence of the localizing function.}
Let $a\ge 0$,  $b\in \bbR$, $1\le p\le 2$.
Let $\phi$ be a smooth function supported on a compact subinterval of $(0,\infty)$, and assume that $\phi$ is not identically zero.
It will be
 convenient to denote by  $\floc(p,a,b)$ the space of all $m$
which are  integrable  over every compact subinterval of $(0,\infty)$
and satisfy the condition
\Be\label{floc}
\sup_{t>0} t^b \Big(\int_{-\infty}^\infty \big|\cF_\bbR^{-1}[\phi m(t\cdot)](x)\big|^p (1+|x|)^{ap} dx\Big)^{1/p}\le A
\Ee for some finite $A$. Here $\floc$
refers to localization and to the Fourier transform.

We use Lemma \ref{straightfconv} and
Lemma \ref{straightfconvLor}
to prove that the
choice   of the  cutoff function $\phi$ in \eqref{floc} and
 \eqref{global}  does not matter. Moreover we wish,
for suitable $\phi$,  use discrete conditions where  the sup is taken
 over dyadic $t$. To formulate these
choose  $\vphi\in C^\infty_c(\frac 12, 2)$
with the property that
\Be\label{speccutoff}
\sum_{j\in \bbZ} \vphi^2(2^{-j}s)=1, \quad s>0.\Ee

\begin{lemma}\label{indepofphi}
Let $1<q<\infty$, $1\le \sigma\le \infty$.

(i)
 Suppose
\Be\label{floclor} \sup_{t>0} t^b \Big\|\frac{|\cF_\bbR^{-1}[\phi
m(t\cdot)]}
{(1+|\cdot|)^\beta}\Big\|_{L^{q,\sigma}(\nu_\alpha)}\le A<\infty
\Ee  holds for some $\phi\in C^\infty_c(\bbR^+)$ which is not
identically zero. Let $\eta\in C^\infty_c(\bbR^+)$. Then the
expression analogous to \eqref{floc}, but with $\phi$ replaced by
$\eta$, is bounded by $CA$, where $C$ does not depend on $m$.

(ii)  With  $\vphi\in C^\infty_c(\frac 12, 2)$
satisfying \eqref{speccutoff}  the left hand side of
\eqref{floclor} is bounded by
\Be\label{floclordisc}
C\sup_j 2^{jb}
\Big\|\frac{|\cF_\bbR^{-1}[\vphi m(2^j\cdot)]}
{(1+|\cdot|)^\beta}\Big\|_{L^{q,\sigma}(\nu_\alpha)}.
\Ee
\end{lemma}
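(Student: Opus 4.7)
The plan is to prove both parts by combining a smooth partition-of-unity argument with the convolution and dilation inequalities from Lemma \ref{straightfconvLor}.

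For part (i), given $\phi,\eta\in C^\infty_c(\bbR^+)$ with $\phi\not\equiv 0$, I would first choose finitely many scalars $\lambda_1,\dots,\lambda_N>0$ such that $\sum_{k=1}^N \phi(\lambda_k s)^2$ is bounded below on a neighborhood of $\supp\eta$. Dividing and multiplying by this positive smooth function yields a decomposition
\[
\eta(s)=\sum_{k=1}^N \psi_k(s)\,\phi(\lambda_k s),\qquad \psi_k\in C^\infty_c(\bbR^+).
\]
Substituting into $\cF^{-1}_s[\eta\, m(t\cdot)]$ and performing the change of variable $u=\lambda_k s$ in the factor $\phi(\lambda_k\cdot)\, m(t\cdot)$ gives
\[
\cF^{-1}[\eta\, m(t\cdot)](x)=\sum_k \cF^{-1}[\psi_k]\ast \Big[\lambda_k^{-1}\,\cF^{-1}[\phi\, m((t/\lambda_k)\cdot)](\cdot/\lambda_k)\Big](x).
\]
Each $\cF^{-1}[\psi_k]$ is Schwartz, so \eqref{convwithSchwLor} handles the outer convolution and \eqref{effectofdilationLor} handles the dilation by $\lambda_k^{-1}$ (which ranges over a compact subset of $(0,\infty)$). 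Substituting $t'=t/\lambda_k$ into the hypothesis \eqref{floclor}, together with $(t')^b\asymp t^b$ uniformly in $k$, gives the required bound for $\eta$.

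For part (ii), by part (i) it is enough to estimate the left side of \eqref{floclor} with $\phi$ replaced by $\vphi$. Given $t>0$, I write $t=2^j\tau$ with $j\in\bbZ$ and $\tau\in[1,2)$. The change of variable $u=\tau s$ yields
\[
\cF^{-1}[\vphi\, m(t\cdot)](x)=\tau^{-1}\,\cF^{-1}[\vphi(\cdot/\tau)\, m(2^j\cdot)](x/\tau).
\]
Since $\vphi(\cdot/\tau)$ is supported in $[1/2,4]$ uniformly in $\tau\in[1,2)$, the partition \eqref{speccutoff} gives
\[
\vphi(s/\tau)=\sum_{|k|\le K}\vphi(s/\tau)\,\vphi^2(2^{-k}s),
\]
where only a uniformly bounded number of summands are nonzero and $K$ is independent of $\tau$. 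Each summand may be rewritten as the product $\big[\vphi(\cdot/\tau)\vphi(2^{-k}\cdot)\big]\cdot\big[\vphi(2^{-k}\cdot)\, m(2^j\cdot)\big]$; changing variable $v=2^{-k}s$ inside the second factor reduces the summand to a convolution of a Schwartz function (whose seminorms are uniform in $\tau$ and $k$) with a dilation of $\cF^{-1}[\vphi\, m(2^{j+k}\cdot)]$. A final application of \eqref{convwithSchwLor} and \eqref{effectofdilationLor}, combined with $t^b\asymp 2^{jb}$, delivers the bound by \eqref{floclordisc}.

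The main obstacle is bookkeeping the uniformity of constants. The Schwartz seminorms of the cutoffs $\psi_k$ and $\vphi(\cdot/\tau)\vphi(2^{-k}\cdot)$, the dilation factors $\lambda_k^{-1}$ and $\tau$, and the number $K$ of relevant indices in part (ii) must all remain bounded independently of the dyadic index $j$ and of $\tau\in[1,2)$. Since the $\lambda_k$ are finitely many and $\tau$ varies over a compact interval this is automatic, but the partition-of-unity reductions must be performed carefully so that no implicit dependence on $t$ or $j$ creeps into the Schwartz constants feeding Lemma \ref{straightfconvLor}.
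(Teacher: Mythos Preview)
Your proposal is correct and follows essentially the same approach as the paper: decompose $\eta$ as a superposition of dilates of $\phi$ times smooth cutoffs, then apply the convolution and dilation estimates \eqref{convwithSchwLor}, \eqref{effectofdilationLor} from Lemma~\ref{straightfconvLor}. The only cosmetic difference is that the paper uses a continuous decomposition $\eta(s)m(ts)=c^{-1}\int_\eps^{1/\eps}\phi(\tau s)\eta(s)\cdot\phi(\tau s)m(ts)\,\tfrac{d\tau}{\tau}$ (via $\int_0^\infty\phi^2(\tau s)\tfrac{d\tau}{\tau}=c$) in place of your finite sum $\sum_k\psi_k(s)\phi(\lambda_k s)$, and then leaves part (ii) to the reader; your discrete version and your detailed treatment of (ii) are entirely adequate substitutes.
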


\begin{proof}
We begin by observing that
$\int_0^\infty \phi^2(\tau s) \frac{d\tau}\tau= c>0$ independent of $s$.
Hence $\eta(s) m(ts)=c^{-1}\int_0^\infty
 \phi^2(\tau s)\eta(s) m(ts)\tau^{-1}d\tau$
and since  if $s$ is taken from a compact subset  of $(0,\infty)$
the integral reduces to an integral over $[\eps, \eps^{-1}]$ for some $\eps\in (0,1)$. Thus
\[\cF^{-1}[\eta(s) m(ts)]=
\int_{\eps}^{1/\eps} \Phi_\tau * \big(\tau^{-1}k_{t/\tau}(\tau^{-1}
\cdot)\big) \frac{d\tau}{\tau}
\]
where $\Phi_\tau=\cF^{-1}[
 \phi(\tau \cdot) \eta]$
and $k_t=\cF^{-1}[
 \phi m(t\cdot)]$.
Now the assertion (i) follows
immediately from
\eqref{convwithSchwLor}
and \eqref{effectofdilationLor}.
(ii) is proved similarly; the details are left to the reader.
\end{proof}


\medskip
\noi{\bf Interpolation.}
Interpolation results for the spaces $\floc(p,a,b)$
are analogous to those for localized potential spaces in
\cite{connschw},
\cite{cgt}, with a very similar proof; therefore we only give a sketch.
We denote by $[\cdot,\cdot]_\vth$, $[\cdot,\cdot]^\vth$
the complex interpolation methods of Calder\'on
(see \cite{cald}, and  also ch. 4 in \cite{BL}).

\begin{lemma} \label{interpolfloc}
Let $a_0, a_1\ge 0$, $b_0, b_1\in \bbR$ and
$1\le p_0, p_1, \le 2$. Suppose that
$(a,b,p^{-1})=(1-\vth)(a_0,b_0,p_0^{-1})+\vth
(a_1,b_1,p_1^{-1})$ and $0<\vth<1$. Then
\Be \label{interpolfloceq}
\big[ \floc(p_0,a_0,b_0), \floc(p_1,a_1,b_1)\big]^\vth=
\floc(p,a,b).
\Ee
\end{lemma}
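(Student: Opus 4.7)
The plan is to realize $\floc(p,a,b)$ as a retract of a weighted $\ell^\infty$-valued sequence space and then invoke the standard retraction lemma for interpolation functors together with the known behaviour of the second Calder\'on method on $\ell^\infty$-valued spaces and on weighted $L^p$ spaces.

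Fix $\vphi\in C^\infty_c(\tfrac12,2)$ satisfying \eqref{speccutoff}. By Lemma \ref{indepofphi}(ii) an equivalent quasi-norm on $\floc(p,a,b)$ is $\sup_{j\in\bbZ} 2^{jb}\|g_j(m)\|_{E^p_a}$, where $g_j(m):=\cF^{-1}_\bbR[\vphi\, m(2^j\cdot)]$ and $E^p_a:=L^p(\bbR,(1+|x|)^{ap}\,dx)$. Writing $\ell^\infty_b(X)$ for the space of sequences indexed by $\bbZ$ with norm $\sup_j 2^{jb}\|x_j\|_X$, I introduce
\[
J(m):=(g_j(m))_{j\in\bbZ},\qquad R(\{f_j\})(s):=\sum_j \vphi(2^{-j}s)\,\cF_\bbR[f_j](2^{-j}s).
\]
The resolution $\sum_j\vphi^2(2^{-j}s)=1$ yields $R\circ J=\mathrm{Id}$, so $\floc(p,a,b)$ is a retract of $\ell^\infty_b(E^p_a)$ once boundedness of $J$ and $R$ is verified.

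Boundedness of $J$ is exactly the discrete characterization of Lemma \ref{indepofphi}(ii). For $R$, I compute $g_k(R(\{f_j\}))$ and observe that $\vphi(u)\,\vphi(2^{k-j}u)$ vanishes unless $|j-k|\le 1$, so only three indices contribute. Each contribution is, after a change of variables, a convolution of a fixed Schwartz function (with bounds uniform in $j-k\in\{-1,0,1\}$) against a bounded dilate of $f_j$. The estimates \eqref{convwithSchw} and \eqref{effectofdilation} of Lemma \ref{straightfconv} then give $\|g_k(R(\{f_j\}))\|_{E^p_a}\lc \sum_{|j-k|\le 1}\|f_j\|_{E^p_a}$, and multiplying by $2^{kb}$ before taking the supremum over $k$ delivers $\|R(\{f_j\})\|_{\floc(p,a,b)}\lc \|(f_j)\|_{\ell^\infty_b(E^p_a)}$.

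With the retract structure in hand, the standard retraction lemma for interpolation functors yields
\[
\|m\|_{[\floc(p_0,a_0,b_0),\floc(p_1,a_1,b_1)]^\vth}\approx \|J(m)\|_{[\ell^\infty_{b_0}(E^{p_0}_{a_0}),\ell^\infty_{b_1}(E^{p_1}_{a_1})]^\vth}.
\]
Two classical ingredients then close the proof: the commutation relation $[\ell^\infty_{b_0}(X_0),\ell^\infty_{b_1}(X_1)]^\vth=\ell^\infty_b([X_0,X_1]^\vth)$ available for the second Calder\'on method, and the Stein--Weiss interpolation $[E^{p_0}_{a_0},E^{p_1}_{a_1}]^\vth=E^p_a$ (straightforward since all $p_i$ are finite). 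Substituting and applying Lemma \ref{indepofphi}(ii) in the reverse direction yields $\floc(p,a,b)$. The main obstacle is precisely the $\ell^\infty$ commutation step; the first Calder\'on method is known to fail here, and this is why the statement is formulated for $[\cdot,\cdot]^\vth$ rather than for $[\cdot,\cdot]_\vth$.
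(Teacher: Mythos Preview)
Your proposal is correct and follows essentially the same route as the paper's own proof: the maps $J$ and $R$ coincide with the paper's $\fA$ and $\fB$, the retract structure is established in the same way (support considerations plus Lemma~\ref{straightfconv}), and the conclusion rests on the same two ingredients, namely Calder\'on's sequence-space formula for $[\,\cdot\,,\,\cdot\,]^\vth$ applied to $\ell^\infty_b$-spaces and Stein--Weiss interpolation of the weighted $L^p$ spaces. Your remark on why $[\,\cdot\,,\,\cdot\,]^\vth$ is needed rather than $[\,\cdot\,,\,\cdot\,]_\vth$ matches the paper's observation as well.
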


\begin{proof}[Sketch of proof]
Let
$\|K\|_{L(p,a)}:=(\int_{-\infty}^\infty|K(t)|^p (1+|t|)^{ap} dt)^{1/p}$
and denote by  $\ell^\infty_b(L(p,a))$ be the space of sequences of $L(p,a)$ functions $\{G_j\}_{j\in \bbZ}$ for which
$\sup_j 2^{jb} \|G_j\|_{L(p,a)}<\infty$.
Weighted $L^p$ spaces can be interpolated by the complex method
(see \cite{BL}, ch. 5) and we have
$$L(p,a)=[L(p_0,a_0), L(p_1,a_1)]_\vth.$$
By a result of Calder\'on  (\cite{cald}, \S 13.6)
\Be \label{seqinterpol}
\ell_{b}^\infty(L(p,a))
=[\ell_{b_0}^\infty(L(p_0,a_0)),
\ell_{b_1}^\infty(L(p_1,a_1))]^\vth
\Ee
and one has to show that $\floc (p,a,b)$ is a retract of
$\ell_{b}^\infty(L(p,a))$; {\it i.e.} there are bounded linear  operators
\begin{align*}
&\fA: \floc(p,a,b)\to \ell_b^\infty(L(p,a))\, ,
\\
&\fB: \ell_b^\infty(L(p,a)) \to \floc(p,a,b)\, ,
\end{align*}
so that $\fB\circ\fA$ is the identity on $\floc(p,a,b)$.
These maps are given by
\begin{align}
\big[\fA m\big]_j &= \cF^{-1}[\vphi m(2^j\cdot)]\,, \label{Adef}
\\
\fB G &= \sum_{k\in \bbZ} \vphi(2^{-k}\cdot) \widehat{G_k} (2^{-k}\cdot)\,.
\label{Bdef}
\end{align}
$\fA$ is bounded by definition of the $\floc(p,a,b)$ norm and the
boundedness  of $\fB$ is straightforward; one uses Lemma \ref{straightfconv}.
Also
$\fB\circ\fA$ is the identity on $\floc(a,b,p)$, by \eqref{speccutoff}.
This shows \eqref{interpolfloceq},  the details are left to the reader.
\end{proof}

\noi{\it Remark.} The analogues of these theorems
for localized potential spaces are proved by Connett and Schwartz in
\cite{connschw}, see also \cite{cgt}.
In \cite{connschw} it is also noted that the analogue of
\eqref{interpolfloceq} fails for the $[\cdot,\cdot]_\vth$ method (and their argument applies here as well).
In addition, if
$\floc_o(p,a,b)$
denotes the closed subspace of functions
for which
the expressions $2^{jb}
\|\cF_\bbR^{-1}[\vphi m(2^j\cdot)]\|_{L^p((1+|x|)^{ap} dx)}$
tend to $0$ as $|j|\to \infty$,
then one also has
$\big[ \floc_o(p_0,a_0,b_0), \floc_o(p_1,a_1,b_1)\big]_\vth=
\floc_o(p,a,b)$. This is analogous to a result in \cite{connschw} on localized potential spaces.

%

\section{Kernel estimates}\label{kernelestimates}
Assume that
 the multiplier $m$ has compact support in $[\frac 12,2]$. Here we give pointwise estimates for the
kernel of multiplier transformations involving two Bessel
transforms $\cB_a$, $\cB_b$ of possibly different orders; however
the main interesting case is of course $a=b=d$. We can write for
$a,b>0$
\Be \label{integralrep}\cB_a[m\cB_b f](r)= \int \cK_{a,b}[m](r,s)
s^{b-1} f(s) ds \Ee where the kernel is given by
\Be\label{kernelintegral} \cK_{a,b}(r,s)\equiv
\cK_{a,b}[m](r,s)=\int_0^\infty m(\rho) B_a(\rho r)B_b(\rho s)
\rho^{a-1} d\rho. \Ee

\begin{proposition}\label{kernest}
Let $a\ge1$, $b\ge 1$, $N >1$ and let  $m$ be integrable and supported
 in $[\frac 12,2]$.
Then  for $\beta, \gamma=0,1,2,\dots$
\begin{multline} \label{kernestim}
\big|
\partial_r^{\beta} \partial_s^\gamma \cK_{a,b}[m](r,s)\big| \le
\\
C_{N,\beta,\gamma} \sum_{(\pm,\pm)} (1+r)^{-\frac{a-1}{2}}
(1+s)^{-\frac{b-1}{2}} \int\frac {|\cF_\bbR^{-1}[m](\pm r\pm
s-u)|}{(1+|u|)^N} du.
\end{multline}
\end{proposition}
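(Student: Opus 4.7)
The starting point is
$$\partial_r^{\beta}\partial_s^{\gamma}\cK_{a,b}[m](r,s)=\int_0^\infty m(\rho)\,\rho^{\,a-1+\beta+\gamma}\,B_a^{(\beta)}(\rho r)\,B_b^{(\gamma)}(\rho s)\,d\rho,$$
obtained from \eqref{kernelintegral} and $\partial_r B_a(\rho r)=\rho\,B_a'(\rho r)$. The strategy is to substitute the Bessel asymptotic \eqref{besselasympt} for $B_a^{(\beta)}$ and $B_b^{(\gamma)}$ and then read off the four oscillatory modes $e^{i\rho(\pm r\pm s)}$ that produce the four summands in \eqref{kernestim}. Fix $N$ and an integer $M$ (to be chosen large), and pick $\chi\in C^\infty_c(0,\infty)$ with $\chi\equiv 1$ on $[\tfrac12,2]$, so that $m=m\chi$.

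In the principal region $r,s\ge 2$ the expansion \eqref{besselasympt} applies uniformly for $\rho\in\supp m$. Expanding the product of the two asymptotic series produces four families of leading terms of the form
$$c^{\pm,\pm}_{\nu,\mu}\,r^{-\nu-\frac{a-1}{2}}\,s^{-\mu-\frac{b-1}{2}}\int m(\rho)\,\psi_{\nu,\mu}(\rho)\,e^{i\rho(\pm r\pm s)}\,d\rho,$$
where $\psi_{\nu,\mu}(\rho):=\chi(\rho)\rho^{\beta+\gamma-\nu-\mu+(a-b)/2}$ is smooth and compactly supported. The inner integral equals $2\pi\,\cF^{-1}_\bbR[m\psi_{\nu,\mu}](\pm r\pm s)=2\pi\,(\cF^{-1}_\bbR[m]*\cF^{-1}_\bbR[\psi_{\nu,\mu}])(\pm r\pm s)$, and the Schwartz decay $|\cF^{-1}_\bbR[\psi_{\nu,\mu}](u)|\le C_{N,\nu,\mu}(1+|u|)^{-N}$ converts it into the desired convolution expression. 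Since $\nu,\mu\ge 0$ the prefactor is bounded by a constant times $(1+r)^{-(a-1)/2}(1+s)^{-(b-1)/2}$. The cross-terms of type main$\times$error or error$\times$error contain a factor $(\rho r)^{-M}E_{M,\beta,a}(\rho r)$ (or its $s$-analogue); by \eqref{derivativesofE} each $\rho$-derivative of $\chi(\rho)\rho^{\text{pow}}E_{M,\beta,a}(\rho r)$ costs at most a factor of $r$, so the convolution estimate gains at most $r^N$, and choosing $M>N+(a-1)/2+\beta+\gamma$ (and similarly in $s$) lets the prefactor $r^{-M}s^{-M}$ absorb this growth into $(1+r)^{-(a-1)/2}(1+s)^{-(b-1)/2}$.

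In the complementary region where $r\le 2$ or $s\le 2$ the corresponding weight is $\asymp 1$. If $r\le 2$ and $s\ge 2$, the asymptotic is used only for $B_b^{(\gamma)}(\rho s)$, while $B_a^{(\beta)}(\rho r)$ is smooth in $\rho\in[\tfrac12,2]$ with derivatives bounded uniformly in $r\in[0,2]$ (from the entire power series of the Bessel function); the previous argument then yields a bound of the form $(1+s)^{-(b-1)/2}\sum_\pm\int|\cF^{-1}_\bbR[m](\pm s-u)|(1+|u|)^{-N}du$, and the elementary inequality $(1+|u|)^{-N}\lesssim(1+|u\mp r|)^{-N}$ (valid for $|r|\le 2$) promotes this to the four-sign form \eqref{kernestim}. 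When both $r,s\le 2$, write $\cK_{a,b}(r,s)=\int\cF^{-1}_\bbR[m](u)\,\hat\Psi_{r,s}(u)\,du$ with $\Psi_{r,s}(\rho)=\chi(\rho)\rho^{a-1+\beta+\gamma}B_a^{(\beta)}(\rho r)B_b^{(\gamma)}(\rho s)$, note that $|\hat\Psi_{r,s}(u)|\lesssim(1+|u|)^{-N}$ uniformly in $r,s\in[0,2]$, and apply $(1+|v|)^{-N}\lesssim(1+|\pm r\pm s-v|)^{-N}$ (valid in this range) to match the asserted form.

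The main technical difficulty is the bookkeeping in the error-term step: one must verify that all Schwartz constants from the truncated remainders depend on $r,s$ only through controlled powers of $r$ and $s$, so that taking $M$ large enough makes every cross-term absorb cleanly into $(1+r)^{-(a-1)/2}(1+s)^{-(b-1)/2}$ while preserving the convolution structure with $\cF^{-1}_\bbR[m]$.
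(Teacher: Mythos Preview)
Your approach is essentially the paper's: the same case split on the sizes of $r$ and $s$, the same use of the Bessel asymptotics \eqref{besselasympt} to extract the four oscillatory modes, and the same Parseval/convolution argument to produce the integrals $\int|\cF^{-1}_\bbR[m](\pm r\pm s-u)|(1+|u|)^{-N}\,du$. The treatment of the small-variable regions is correct and matches the paper.

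There is, however, a genuine (though easily repaired) gap in your handling of the remainder terms for $r,s\ge 2$. Consider for instance the main$\times$error piece
\[
r^{-\frac{a-1}{2}-\nu}\,s^{-M}\int m(\rho)\,\chi(\rho)\rho^{\text{pow}}\,e^{\pm i\rho r}\,E_{M,\gamma,b}(\rho s)\,d\rho.
\]
Your argument correctly gives $|\cF^{-1}_\bbR[\chi\rho^{\text{pow}}E_{M,\gamma,b}(\cdot\,s)](u)|\lesssim s^N(1+|u|)^{-N}$, and hence a bound of the shape
\[
r^{-\frac{a-1}{2}}\,s^{\,N-M}\int\frac{|\cF^{-1}_\bbR[m](\mp r-u)|}{(1+|u|)^N}\,du.
\]
But this convolution is centered at $\mp r$, not at $\pm r\pm s$ as the proposition requires; the error$\times$error term is worse, centered at $0$. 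Recentering costs a further factor: from $(1+|v|)\le(1+|v\pm s|)(1+s)$ one gets
\[
\int\frac{|\kappa(\mp r-u)|}{(1+|u|)^N}\,du\;\lesssim\;(1+s)^N\int\frac{|\kappa(\mp r\mp s-u)|}{(1+|u|)^N}\,du,
\]
which is exactly the paper's preliminary inequality \eqref{elemconv}. This extra $(1+s)^N$ means your condition $M>N+(a-1)/2+\beta+\gamma$ is not enough; one needs roughly $M>2N+(a+b)/2$, as the paper takes. Once you insert this translation step and adjust $M$, the argument goes through.
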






\begin{proof}
We begin with a preliminary observation, which we shall use several times, namely the inequality
\Be \label{elemconv}
(1+R)^{-M}\int \frac{|g(u)|}{(1+|u|)^{N_1}}du
\le C (1+R)^{-M+N_1} \int \frac{|g(R+u)|}{(1+|u|)^{N_1}}du;
\Ee
this is (similar to the statement in Lemma \ref{straightfconv})
 a consequence of the triangle inequality and a translation in the integral.

 Let $\chi$ be a $C^\infty$ function so that
$\chi(s)=1$ for $s\in (1/2,2)$ and $\chi$ is supported in
$(1/4,4)$. If $r,s\le 1$ then the function
\[\rho\mapsto  h(\rho)=\chi(\rho)\rho^{a-1+\beta+\gamma} B_a^{(\beta)}(\rho
r) B_b^{(\gamma)}(\rho s)\] is smooth and has a rapidly decaying
Fourier transform, with bounds uniform in $r,s\le 1$. Denote the
Fourier transform by $u\mapsto \la(r,s,u)$. We may apply
duality for the Fourier transform 
and estimate (with  $\ka=\cF_\bbR^{-1}[m]$)
\Be
\label{plest}
|\partial_r^{\beta} \partial_s^\gamma \cK_{a,b}[m](r,s)|
=\Big|\int \kappa(u){\la(r,s,u)} du\Big|
 \le
C_{N_1,\beta,\gamma}  \int \frac{|\kappa(u)|}{(1+|u|)^{N_1}} du.
\Ee
Clearly this term is bounded by a suitable  constant times any  of the terms
on the right hand side of \eqref{kernestim}, as long as $r,s\le 1$.

Next we consider the case $s\le 1$, $ r\ge 1/2$ and use the
asymptotic expansion \eqref{besselasympt} for $B_a(\rho r)$ and
its derivatives. We assume that the parameter $M$ is chosen large,
in order to use \eqref{elemconv}, in fact we require
$M>2N+(a+b)/2$.

This yields
\begin{align*}
\partial_r^{\beta} \partial_s^\gamma 
\cK_{a,b}(r,s)&=\sum_{\pm}\sum_{\nu=0}^M r^{-\frac{a-1}{2}-\nu}
\int m(\rho) e^{\pm ir\rho}
 \eta^{\pm}_{\nu, \beta,a,b}(s,\rho) d\rho
\\&+r^{-M}
\int m(\rho) \om_{M,\beta,\gamma,a,b}(r,s,\rho) d\rho
\end{align*}
where
\begin{align*} \eta^{\pm}_{\nu, \beta,a,b}(s,\rho) &=c^\pm_{\nu, \beta,a}\chi(\rho)
\rho^{\frac{a-1}{2}+\beta+\gamma-\nu} B_b^{(\gamma)}(s\rho)\, ,
\\
\om_{M,\beta,\gamma,a,b}(r,s,\rho)&=\rho^{-M+\beta+\gamma+a-1}
E_{M,\beta,a}(r\rho) B_b^{(\gamma)}(s\rho)\, .
\end{align*}

The terms in the sum can be realized as convolutions of $\kappa$
with rapidly decaying functions, multiplied with
$r^{-\frac{a-1}{2}-\nu}$. These terms are bounded by
\[r^{-\frac{a-1}2}\int\frac {|\kappa(\mp r-u)|}{(1+|u|)^N}du\]
and since $s\lc 1$ this also implies the bound by the sum of terms on the right hand side of \eqref{kernestim}.
For the error term we argue as above, using duality  to estimate
\[
r^{-M}
\Big|\int \ka(u) {\widehat \om_{M,\beta,\gamma,a,b}(r,s,u)}
du\Big|
\lc r^{-M+N} \int\frac{|\kappa(u)|}{(1+|u|)^N}du\]
and the desired estimate  follows from using \eqref{elemconv},
recall $M>2N+(a-1)/2$.

The estimations for the case $r\lc 1$ and $s\gc 1$ are similar, the roles
 of $r$ and $s$ are reversed.

Finally, to handle the case $r, s\ge 1/2$  we use the asymptotic
expansion \eqref{besselasympt} for both $B_a(\rho s)$ and
$B_b(\rho r)$, again with large $M$. We then write
\[\partial^\beta_r\partial^\gamma_s \cK_{a,b}(r,s) =\int_0^\infty m(\rho)
\rho^{a-1+\beta+\gamma} B_a^{(\beta)}(r\rho) B_b^{(\gamma)}(s\rho)
d\rho\] as
\begin{align}
\label{fourterms} & \sum_{\nu,\nu'}\sum_{\pm,\pm} c_{\nu,
\beta,a}^\pm c_{\nu', \gamma,b}^\pm r^{-\frac{a-1}{2}-\nu}
s^{-\frac{b-1}{2}-\nu'} \int  m(\rho)
\rho^{\frac{a-b}{2}+\beta+\gamma-\nu-\nu'} e^{i\rho(\pm r\pm
s)}d\rho
\\&+
\sum_{\nu}\sum_{\pm} c_{\nu, \beta,a}^\pm
r^{-\frac{a-1}{2}-\nu}s^{-M} \int  m(\rho)
\rho^{\frac{a-1}{2}+\beta+\gamma-\nu-M} E_{M,\gamma,b}(\rho s)
 e^{\pm i\rho r}d\rho
\notag
\\&+
 \sum_{\nu'}\sum_{\pm}
c_{\nu', \gamma,b}^\pm s^{-\frac{b-1}{2}-\nu'}r^{-M} \int  m(\rho)
\rho^{a-\frac{b+1}{2}+\beta+\gamma-\nu'-M} E_{M,\beta,a }(\rho r)
e^{\pm i\rho s}d\rho \notag
\\&+ (rs)^{-M}
\int  m(\rho) \rho^{a-1+\beta+\gamma-2M}
 E_{M,\beta,a}(\rho r) E_{M,\gamma,b}(\rho s) d\rho.
\notag
\end{align}
The first (double) sum in \eqref{fourterms} is clearly bounded by the right hand side of \eqref{kernestim}.
The second, third and fourth terms are bounded, by the
previous arguments by a constant times
\[r^{-\frac{a-1}2}s^{N-M}\int\frac{|\ka(\mp r+u)|}{(1+|u|)^{N}}du, \quad
s^{-\frac{b-1}2}r^{N-M}\int\frac{|\ka(\mp s+u)|}{(1+|u|)^{N}}du,
\]
and $(rs)^{N-M}\int|\ka(u)|(1+|u|)^{-N}du,$ respectively. However
by using inequality \eqref{elemconv} and the condition
$M>2N+(a+b)/2$ these terms are seen to be also bounded by the
right hand side of \eqref{kernestim}.
\end{proof}

Proposition \ref{kernest} is mainly interesting as an estimate for
general multipliers. However for the proof of necessary conditions
we record a straightforward consequence for smooth multipliers, in
the
special case where $a=1$, $b=d$.

\begin{corollary} \label{babytranspl}
Let $d\ge 1$ and let $\chi\in C^\infty$ be supported in $[1/4,4]$.
Then for any  $M\ge 0$
\begin{equation*}
|\cB_1[\chi \cB_d f](r)|\le C_M \int_0^\infty \frac{|f(s)|
s^{d-1}}{(1+|r-s|)^M (1+s)^{\frac{d-1}{2}}} ds\,.
\end{equation*}
\end{corollary}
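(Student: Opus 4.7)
The plan is a direct specialization of Proposition \ref{kernest}. The first step is to write
\[
\cB_1[\chi \cB_d f](r) = \int_0^\infty \cK_{1,d}[\chi](r,s)\, s^{d-1} f(s)\, ds
\]
using the integral representation \eqref{integralrep}--\eqref{kernelintegral} with $a=1$, $b=d$; note that since $\chi$ is supported in $[1/4,4]$, we can rescale $m(\rho)=\chi(\rho)$ into the framework of the proposition (the support $[1/2,2]$ is a matter of convenience, and the argument in the proof of Proposition \ref{kernest} applies verbatim to any fixed compact subinterval of $(0,\infty)$).

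Next, I apply Proposition \ref{kernest} with $a=1$, $b=d$, $\beta=\gamma=0$. Since $(1+r)^{-(a-1)/2}=1$ and $(1+s)^{-(b-1)/2}=(1+s)^{-(d-1)/2}$, this yields for arbitrary $N\ge 1$
\[
|\cK_{1,d}[\chi](r,s)|\le C_N (1+s)^{-\frac{d-1}{2}} \sum_{(\pm,\pm)} \int \frac{|\cF_\bbR^{-1}[\chi](\pm r \pm s-u)|}{(1+|u|)^N}\, du.
\]
Because $\chi\in C^\infty_c$, the function $\cF_\bbR^{-1}[\chi]$ is Schwartz, so $|\cF_\bbR^{-1}[\chi](v)|\le C_{M'}(1+|v|)^{-M'}$ for every $M'$.

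The crucial elementary observation is that for $r,s\ge 0$ and any choice of signs, $|\pm r\pm s|\ge |r-s|$: the cross-sign terms give exactly $|r-s|$, and the same-sign terms give $r+s\ge|r-s|$. Thus, choosing $M'$ and $N$ large enough (say $M'=M+N+1$), the standard convolution inequality
\[
\int \frac{du}{(1+|v-u|)^{M'}(1+|u|)^N} \lesssim (1+|v|)^{-M}
\]
gives $\int \frac{|\cF_\bbR^{-1}[\chi](\pm r\pm s-u)|}{(1+|u|)^N}du \le C_M (1+|r-s|)^{-M}$ for every sign combination. Combining,
\[
|\cK_{1,d}[\chi](r,s)|\le C_M\, (1+s)^{-\frac{d-1}{2}}(1+|r-s|)^{-M},
\]
and inserting this pointwise bound into the integral representation yields the claimed inequality.

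There is no real obstacle here: the corollary is just a repackaging of Proposition \ref{kernest} in the case $(a,b)=(1,d)$. The only small care needed is the sign bookkeeping (verifying $|\pm r \pm s|\ge |r-s|$ when $r,s\ge 0$), and taking the decay parameters $M'$, $N$ in the kernel estimate large enough to absorb the convolution with the Schwartz tail of $\cF_\bbR^{-1}[\chi]$ while still producing decay $(1+|r-s|)^{-M}$ with the prescribed $M$.
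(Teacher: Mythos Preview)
Your proof is correct and follows exactly the approach indicated in the paper: specialize Proposition \ref{kernest} to $(a,b)=(1,d)$, then use the rapid decay of $\cF_\bbR^{-1}[\chi]$ together with a simple convolution inequality to convert the four $(\pm,\pm)$ terms into the single bound $(1+|r-s|)^{-M}$. The paper's proof is a one-line sketch of precisely this, and your write-up just fills in the details (including the sign observation $|\pm r\pm s|\ge |r-s|$ and the harmless enlargement of the support interval from $[1/2,2]$ to $[1/4,4]$).
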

\begin{proof}
  We use the estimate of Proposition \ref{kernest} in conjunction with a simple convolution inequality which is based on the rapid decay of
$\cF^{-1}[\chi]$.
\end{proof}

\section{The  implications
{\rm (i)$\implies$(ii)$\implies$ (iii)$\implies$(iv)}
of Theorem \ref{mainhankel}}
\label{easy}
%
\begin{proof}[Proof of $(i)\implies (ii)$] This follows from
$L^{p,1}(\mu_d)\subset L^{p,\sigma}(\mu_d)$, for $\sigma\ge 1$,
with continuous imbedding.
\end{proof}

\begin{proof}[Proof of $(ii)\implies (iii)$]
We use the dilation formula
\Be\label{dil}
\cB_d[g(t^{-1}\cdot)](r)=t^d \cB_d[g](tr).
\Ee
If $\phi\in C^\infty_c(\bbR_+)$ then the function $f_1:=\cB_d\phi$ belongs to
$L^{p,1}(\mu_d)$ for all $p$
and has positive norm. Now set
$f_t=t^{-d(1-1/p)}\cB_d[\phi(t^{-1}\cdot)]$; then the $L^{p,1}(\mu_d)$
norm of $f_t$ is independent of $t$. Let $\|m\|$ denote the
$L^{p,1}(\mu_d)\to L^{q,\sigma}(\mu_d)$ operator norm of $T_m$.
We may estimate
\begin{align*}
&\|f_1\|_{L^{p,1}(\mu_d)}\|m\|=
\|f_t\|_{L^{p,1}(\mu_d)}\|m\|
\ge \|\cB_d[m \cB_d f_t]\|_{L^{q,\sigma}(\mu_d)}\\&=t^{-d(1-1/p)}
\|\cB_d[\phi(t^{-1}\cdot)m]\|_{L^{q,\sigma}(\mu_d)}
= t^{d(1/p-1/q)} \big\|\cB_d[m(t\cdot)\phi]\big\|_{L^{q,\sigma}(\mu_d)}
\end{align*}
which proves the implication.
\end{proof}

\begin{proof}[Proof of $(iii)\implies (iv)$]
Let $u_\ev(t,\rho)$ the even extension of $\phi(\rho)m(t\rho)$ to
$\bbR$. Let $h_t:=\cF^{-1}_\bbR[u_{\ev}(t,\cdot)]$. We claim that
it suffices to show
\Be\label{ueven} \big\|(1+|\cdot|)^{-\frac{d-1}{2}}
h_t\big\|_{L^{q,\sigma}(\nu)} \lc  \big\|\cB_d [\phi
m(t\cdot)]\big\|_{L^{q,\sigma}(\mu_d)}, q<2, \Ee
where $d\nu(x)= (1+|x|)^{d-1} dx$.
Indeed if \eqref{ueven} holds let
 $\zeta\in \cS(\bbR)$ so that $\widehat \zeta  $ is
supported in $(1/4,4)$ and  $\widehat \zeta(\rho)= 1$ on
$[1/2,2]$. Then $k_t=\zeta* h_t$ and an application of
\eqref{convwithSchwLor} shows that we can replace $h_t$ by $k_t$
in \eqref{ueven}.

We proceed to show \eqref{ueven}. 
First observe that $\phi m(t\cdot)$ is an $L^2$ function;
namely by assumption (iii)  and the Hausdorff-Young inequality it belongs to the dual space of $L^{q,\sigma}(\mu_d)$ and in view of its support to 
$L^2$. 
Since $\cB_1$ is the cosine
transform and since $\cB_d^2$ is the identity on $L^2$ functions the inequality \eqref{ueven} follows
from 
 \Be\label{transplt}
\big\|(1+(\cdot))^{-\frac{d-1}{2}} \cB_1[\chi \cB_d g]
\big\|_{L^{q,\sigma}((1+r)^{d-1} dr)} \lc
\|g\|_{L^{q,\sigma}(\mu_d)}, \quad q\le 2, \Ee applied to $g= \cB_d[\phi
m(t\cdot)]$. Here the function $\chi$ is assumed to be smooth and
supported in $(1/4,4)$ and equal to one on the support of $\phi$.
This inequality is
related to and could be derived from
the more sophisticated transplantation theorems
of  Stempak \cite{stempak} and Nowak and Stempak \cite{nst}
on the composition of
 nonmodified Hankel transforms, but \eqref{transplt}
has an easy direct proof:
We first note that \eqref{transplt} follows by real interpolation
from the $L^q$ inequalities, i.e. the case $q=\sigma$. Thus it
suffices to show \Be\label{transpltLq} \big\|\cB_1[\chi \cB_d g]
\big\|_{L^{q}((1+r)^{(d-1)(1-q/2)}dr)}\lc
\|g\|_{L^{q }(\mu_d)}. \Ee This in turn follows easily from
Corollary \ref{babytranspl} and an estimate of Hardy type. Indeed
changing variables $s=r+u$ and an application of Minkowski's
inequality yields \begin{multline*}\big\|\cB_1[\chi \cB_d g]
\big\|_{L^{q}((1+r)^{(d-1)(1-q/2)}dr)}
\\
\lc \int_{-\infty}^\infty (1+|u|)^{-N} \Big(\int_{r=-u}^\infty
(1+r)^{(d-1)(1-q/2)} \frac{|f(r+u)|^q (r+u)^{(d-1)q}}
{(1+|r+u|)^{\frac{d-1}{2}q}} dr \Big)^{1/q} du.
\end{multline*}
We use the estimate $(1+r)^\alpha\lc (1+|r+u|)^{\alpha}
(1+|u|)^{\alpha}$ for $\alpha=(d-1)(1-q/2)$. Thus the last
displayed term is seen to be bounded by
\[
C\int_{-\infty}^\infty (1+|u|)^{-N+(d-1)(1-q/2)}
\Big(\int_{-u}^\infty |f(r+u)|^q (r+u)^{d-1} dr \Big)^{1/q} du
\]
which for large $N$ is $\lc\|f\|_{L^q(\mu_d)}.$
This shows \eqref{transpltLq} and finishes the proof of the
implication $(iii)\implies (iv)$.\end{proof}

\section{Sufficiency: The basic decomposition}
\label{sufficiency}

In this section we begin the proof of  the main implication
\rm(iv)$\implies$(i) of Theorem \ref{mainhankel}.
Let $\vphi\in C^\infty_c(\frac 12, 2)$ as in \eqref{speccutoff}.
Let $\ka_j(r) =\cF_\bbR^{-1}[\vphi m(2^j\cdot)]$,
let \Be \label{Ajdef}
A_j(q,\sigma)=
\big\| (1+|\cdot|)^{-\frac{d-1}2} \ka_j\big\|_{L^{q,\sigma}(\nu)}
\Ee
with $d\nu=(1+|x|)^{d-1} dx$,
and
 \Be\label{defofA}A\equiv A(p,q,\sigma):=
\sup_j 2^{jd(\frac 1p-\frac 1q)} A_j(q,\sigma).\Ee

Define \Be \label{Kjdef} K_j = \cK_{d,d}[\vphi m(2^j\cdot)]\Ee
({\it cf.} \eqref{kernelintegral}) and
\Be \label{Tjdef}T^j f(r)=
\int 2^{jd}
K_j(2^jr, 2^j s) f(s) s^{d-1} ds.\Ee

 Define Littlewood-Paley cutoffs $L_j$, $\widetilde L_j$ by
$\cB_d[L_j f](\rho)= \vphi(2^{-j}\rho)\cB_d f(\rho)$ and
$\cB_d[\widetilde L_j f](\rho)= \eta(2^{-j}\rho)\cB_d f(\rho)$
where $\eta $  is supported in $(1/4,4)$ and equal to $1$ on the support of $\vphi$.
Then $\cB_d[m\cB_d f]=\sum_j L_j T^j \widetilde L_j f$.
We apply (the Lorentz space analogues of) the
 Littlewood-Paley inequalities  \eqref{LP1}, \eqref{LP2}
(one with the $L_j$, the other one with the $\widetilde L_j$).
Using also Lemma  \ref{indepofphi} (which justifies
the use of the specific cutoff function $\varphi$ in \eqref{speccutoff})
 we see that Theorem \ref{mainhankel} follows from
the inequalities for
vector-valued functions $\{f_j\}_{j\in \bbZ}$,
\Be\label{vectval}
\Big\|\Big(\sum_j|T^j f_j|^2\Big)^{1/2}\Big\|_{L^{q,\sigma}(\mu_d)}
\lc A(p,q,\sigma)
\Big\|\Big(\sum_j|f_j|^2\Big)^{1/2}\Big\|_{L^{p,\om}(\mu_d)}.
\Ee

For a further decomposition we introduce the notation
$$ \chi_n(r)= \chi_{[2^n, 2^{n+1}]}(r)$$
and decompose a.e. into three parts
\Be\label{threeparts}
T^j f= \sum_{n\in \bbZ}\chi_n \Big(
\sum_{m<j+n-5}
+ \sum_{\substack{j+n-5\le m\\ \le j+n+5}}
+\sum_{m>j+n+5} T^j[ f\chi_{m-j}]\Big).
\Ee

The first term will contribute to a Hardy type (or Hilbert integral type)
operator whose estimate
needs the full strength of the assumption.
The second term will contribute to a
singular integral operator, for  vector-valued functions,
whose estimation however will not require
the full strength of our assumption.
We consider the third term as an ``error'' term which
contributes again to a better behaved Hardy type operator.

We let
\begin{align}\label{opdefsH}
H_{j,m}f &= \sum_{n>m-j+5}\chi_n T^j[\chi_{m-j}f],
\\ \label{opdefsS}
S_{j,n,i}f&=\chi_n T^j[\chi_{n+i}f],
\\ \label{opdefsE}
E_{j,m} f &= \sum_{n<m-j-5}\chi_n T^j[\chi_{m-j}f].
\end{align}

By \eqref{threeparts}
$$T^j= \sum_{m\in \bbZ} H_{j,m}+ \sum_{n\in \bbZ}
\sum_{i=-5}^5 S_{j,n,i} +\sum_{m\in \bbZ} E_{j,m} .$$

We now state the main estimates regarding these three terms.
The implicit constants may depend on the parameters $p,q,\sigma,\eps,d$.
For the main term
we have

\begin{proposition} \label{Hprop}
For $m\in \bbZ$,
$1<p\le q<2$, $1\le \sigma\le \infty$
\begin{multline}\label{Hest}
\big\|H_{j,m} f\|_{L^{q,\sigma}(\mu_d)} \\ \lc
\min
\{ 2^{-m (d(\frac 1p-\frac 12)-\frac 12)},2^{m\frac{d}{p'}}\}
2^{jd(\frac1p-\frac 1q)}A_j(q,\sigma) \|f\|_{L^{p,\infty}} (\mu_d).
\end{multline}
\end{proposition}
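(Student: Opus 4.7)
Since $H_{j,m}$ sends functions supported on $s \in [2^{m-j}, 2^{m-j+1}]$ to functions supported on $r > 2^{m-j+5}$, it is a far-field Hardy-type operator in which the distance between the two arguments (after multiplication by $2^j$) is forced to dominate. My strategy is to first extract the dilation factor $2^{jd(1/p-1/q)}$ by rescaling, then apply the pointwise kernel bound of Proposition~\ref{kernest}, combine Minkowski in Lorentz spaces with a Lorentz--H\"older estimate in the short variable, and finally identify the two regimes of $m$ to match the claimed $\min$.

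\emph{Rescaling.} Substituting $u = 2^j r$, $v = 2^j s$ and setting $g(v):=f(2^{-j}v)$ turns the action of $H_{j,m}$ into an integral operator with kernel $K_j(u,v)$ acting on $g$, with input restricted to $v \in [2^m, 2^{m+1}]$ and output supported on $u > 2^{m+5}$. The scaling identities $\|H_{j,m} f\|_{L^{q,\sigma}(\mu_d)} = 2^{-jd/q}\|\cdot\|_{L^{q,\sigma}(\mu_d)}$ and $\|g\|_{L^{p,\infty}(\mu_d)} = 2^{jd/p}\|f\|_{L^{p,\infty}(\mu_d)}$ produce the target factor $2^{jd(1/p-1/q)}$ automatically, reducing everything to the estimate at scale $j = 0$.

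\emph{Kernel bound, Minkowski, and H\"older.} Proposition~\ref{kernest} with $a = b = d$ gives, on our region,
\[
|K_j(u,v)| \,\lc\, (1+u)^{-\tfrac{d-1}{2}} (1+2^m)^{-\tfrac{d-1}{2}} \sum_{\pm,\pm} \ka_j^{\sharp}(\pm u \pm v),
\]
where $\ka_j^{\sharp} := |\ka_j| \ast (1+|\cdot|)^{-N}$ for large $N$; the factor $(1+2^m)^{-(d-1)/2}$ is what generates decay for $m \geq 0$. Applying Minkowski in $L^{q,\sigma}$ (valid since $1<q<\infty$ and $\sigma \geq 1$) in $u$ together with Lorentz--H\"older $L^{p,\infty}\times L^{p',1} \hookrightarrow L^1$ in $v$ reduces matters to checking, uniformly for $v \in [2^m, 2^{m+1}]$, that
\[
\bigl\| (1+u)^{-\tfrac{d-1}{2}} \ka_j^{\sharp}(\pm u \pm v)\, \chi_{\{u > 2^{m+5}\}} \bigr\|_{L^{q,\sigma}(\mu_d)} \,\lc\, A_j(q,\sigma),
\]
together with the elementary fact $\|\chi_{[2^m, 2^{m+1}]}\|_{L^{p',1}(\mu_d)} \lc 2^{md/p'}$.

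\emph{Translation step and the $\min$ envelope.} On $\{u > 2^{m+5}\}$ with $|v| \leq 2^{m+1}$ the displacement is relatively small ($|v|/u \leq 2^{-4}$), so the change of variables $u' = \pm u \pm v$ preserves the weight $(1+u)^{-(d-1)/2}$ and the measure $u^{d-1}\,du$ up to universal constants; after using $u^{d-1} \leq (1+u)^{d-1}$ to pass from $\mu_d$ to $\nu$, Lemma~\ref{straightfconvLor} absorbs the smoothing convolution with $(1+|\cdot|)^{-N}$ and yields the bound by $A_j(q,\sigma)$. Assembling the pieces, the prefactor becomes $(1+2^m)^{-(d-1)/2} 2^{md/p'}$, which for $m \leq 0$ is $\asymp 2^{md/p'}$ (the second term of the $\min$) and for $m \geq 0$ equals $2^{-m(d(1/p-1/2) - 1/2)}$ via the arithmetic identity $d/p' - (d-1)/2 = -(d(1/p-1/2) - 1/2)$ (the first term); the assumption $p < 2d/(d+1)$ makes $d(1/p-1/2) - 1/2 > 0$, so both branches are genuinely decaying and the $\min$ is the correct envelope. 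The only delicate point is the translation step: it relies on the separation $u \geq 2^5 v$ built into the definition of $H_{j,m}$, without which the shift by $\pm v$ would spoil the weighted $L^{q,\sigma}$ norm.
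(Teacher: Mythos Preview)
Your argument is correct and follows essentially the same route as the paper: rescale to extract $2^{jd(1/p-1/q)}$, apply the pointwise kernel bound of Proposition~\ref{kernest}, use Minkowski in $L^{q,\sigma}$, exploit the separation $u\gg v$ to replace $(1+u)$ by $(1+|\pm u\pm v|)$ and reduce to $A_j(q,\sigma)$ via Lemma~\ref{straightfconvLor}, and finish with the $L^{p,\infty}\times L^{p',1}$ duality in the short variable. The only cosmetic difference is that you pull the factor $(1+v)^{-(d-1)/2}\approx(1+2^m)^{-(d-1)/2}$ out as a constant before applying H\"older, whereas the paper keeps it inside and computes $\|\chi_{I_m}(1+\cdot)^{-(d-1)/2}\|_{L^{p',1}(\mu_d)}$ directly; the two are equivalent since $v\in[2^m,2^{m+1}]$.
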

Note that in the range of interest,
$1<p<\frac{2d}{d+1}$, these
 estimates can be summed in $m$.

The estimation of the remaining two terms \eqref{opdefsS}, \eqref{opdefsE}
does  not need the full strength of our
 assumptions. To formulate the appropriate weaker hypotheses
let, for
$\eps\ge 0$, $1\le u<2$
\begin{align}\label{defofBj} B_j(\eps,u)&=
\Big(\int_{-\infty}^\infty|\kappa_j(x)|^u(1+|x|)^{u\eps} dx\Big)^{1/u},
\\ \label{defofB}
B(\eps,p,q)&=\sup_j 2^{jd(1/p-1/q)} B_j(\eps,u(p,q)), \text{ where }
\frac{1}{u(p,q)}=\frac{\frac 1p+\frac 1q-1}{\frac 2p -1}.
\end{align}

\begin{proposition} \label{Eprop}
Let $\eps>0$, $1< p\le q< 2$, $1\le \sigma\le \infty$, and let
$\theta\equiv \theta(p,q)=(\frac 1p-\frac 1q)/(\frac 1p-\frac 12)$.
For $m\in \bbZ$,
\begin{multline}\label{Eest}
\big\|E_{j,m} f\|_{L^{q,\sigma}(\mu_d)}
\\ \lc B(4\eps(1-\theta),p,q) \min
\{ 2^{-m(1-\theta)\eps},2^{m(1-\theta)(d-1)}\}
\|f\|_{L^{p,\sigma}(\mu_d)}.
\end{multline}
\end{proposition}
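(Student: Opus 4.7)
The plan is to prove Proposition \ref{Eprop} by real interpolation between two endpoint estimates: an $L^p\to L^p$ bound ($\theta=0$) and an $L^p\to L^2$ bound ($\theta=1$).

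\textit{Geometry of the kernel.} On the support of $E_{j,m}$ we have $s\in[2^{m-j},2^{m-j+1}]$ and $r\le 2^{m-j-5}$, so $2^js\sim 2^m$ dominates $2^jr\le 2^{m-5}$; each of the four quantities $\pm 2^jr\pm 2^js$ therefore has magnitude comparable to $2^m$. Proposition \ref{kernest} then yields
\begin{equation*}
|2^{jd}K_j(2^jr,2^js)|\lc 2^{jd}(1+2^jr)^{-(d-1)/2}\,2^{-m(d-1)/2}\sum_{\pm,\pm}(|\kappa_j|*\zeta_N)(\pm 2^jr\pm 2^js),
\end{equation*}
with $\zeta_N(u)=(1+|u|)^{-N}$. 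Splitting the convolution at $|v-w|\ge |v|/2$ and using the polynomial weight in $B_j(4\eps,1)$ gives the pointwise estimate $\sup_{|v|\sim 2^m}(|\kappa_j|*\zeta_N)(v)\lc 2^{-4m\eps}B_j(4\eps,1)$.

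\textit{Endpoint at $\theta=0$.} The target is $\|E_{j,m}f\|_{L^p(\mu_d)}\lc B_j(4\eps,1)\min\{2^{-m\eps},2^{m(d-1)}\}\|f\|_{L^p(\mu_d)}$. For very negative $m$, both $2^jr$ and $2^js$ remain bounded, so Bessel factors are harmless and a Schur test using the trivial bound $\|K_j\|_\infty\lc\|\kappa_j\|_1\lc B_j(4\eps,1)$ together with the $\mu_d$-volumes of the input and output annuli produces the factor $2^{m(d-1)}$ (in fact $2^{md}$). For very positive $m$, the pointwise decay of $|\kappa_j|*\zeta_N$ on $\{|v|\sim 2^m\}$ feeds through the kernel bound above; a careful Schur-type analysis, tracking all powers of $2^j,2^m$ and the weights $(1+2^jr)^{-(d-1)/2}$, $r^{d-1}$, $s^{d-1}$, produces net decay of $2^{-m\eps}$ (the weight exponent $4\eps$ in $B_j(4\eps,1)$ is chosen with margin to absorb the losses in this computation and in the subsequent interpolation). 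Upgrading to $L^{p,\sigma}(\mu_d)\to L^{p,\sigma}(\mu_d)$ is handled by the weighted Lorentz convolution estimates of Lemma \ref{straightfconvLor}.

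\textit{Endpoint at $\theta=1$.} The target simplifies to $\|E_{j,m}f\|_{L^2(\mu_d)}\lc 2^{jd(1/p-1/2)}B_j(0,2)\|f\|_{L^p(\mu_d)}$, with no $m$-dependence (consistent with the $\min$ factor raised to power $1-\theta=0$). Since $E_{j,m}$ is a spatial truncation of $T^j$, this bound is inherited directly from the $L^p\to L^2$ estimate for $T^j$ proved in \S\ref{Lp2est}, which by Plancherel equals $2^{jd(1/p-1/2)}\|\kappa_j\|_2$.

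\textit{Interpolation and Lorentz upgrade.} Since $1/u(p,q)=1-\theta/2=(1-\theta)/1+\theta/2$, Lyapunov's inequality applied to $|\kappa_j|$ with weights gives $B_j(4\eps(1-\theta),u(p,q))\leq B_j(4\eps,1)^{1-\theta}B_j(0,2)^{\theta}$, and combining with $d(1/p-1/q)=\theta d(1/p-1/2)$ yields $B(4\eps(1-\theta),p,q)\leq B(4\eps,p,p)^{1-\theta}B(0,p,2)^{\theta}$. The min factor interpolates as $\min\{2^{-m\eps},2^{m(d-1)}\}^{1-\theta}=\min\{2^{-m\eps(1-\theta)},2^{m(d-1)(1-\theta)}\}$. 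Real interpolation between the two endpoint bounds at $\theta=\theta(p,q)=(1/p-1/q)/(1/p-1/2)$, combined with the Lorentz space tools in \S\ref{prel} (Lemmas \ref{vectorlor} and \ref{straightfconvLor}), produces the claimed $L^{p,\sigma}(\mu_d)\to L^{q,\sigma}(\mu_d)$ bound.

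\textit{Main obstacle.} The principal technical point is the Schur-type analysis at the $\theta=0$ endpoint, where the many polynomial factors ($2^j$, $2^m$, $(1+2^jr)^{-(d-1)/2}$, $r^{d-1}$, $s^{d-1}$) must be aligned so that the pointwise decay $2^{-4m\eps}$ of $|\kappa_j|*\zeta_N$ on $\{|v|\sim 2^m\}$ yields net decay of $2^{-m\eps}$ for large positive $m$ while the trivial kernel bound gives $2^{m(d-1)}$ for large negative $m$. A secondary difficulty is the clean upgrade from $L^p$ source to $L^{p,\sigma}$ source in the final statement, since both endpoints share the source exponent $p$ and thus require the specific Lorentz-space machinery of Section \ref{prel} rather than an abstract interpolation of pairs with distinct source indices.
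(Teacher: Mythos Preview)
Your overall strategy --- prove an $L^p\to L^p$ endpoint and an $L^p\to L^2$ endpoint, then interpolate --- matches the paper's. But two steps in your proposal do not go through as written.

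\medskip
\noindent\textbf{The $\theta=0$ endpoint.} The specific mechanism you describe (pointwise bound $\sup_{|v|\sim 2^m}W_j(v)\lc 2^{-4m\eps}B_j(4\eps,1)$ followed by a Schur test) fails. A Schur computation with the sup bound yields
\[
\|E_{j,m}\|_{L^p\to L^p}\lc 2^{m[(d+1)/(2p')+1/p]-4m\eps}B_j(4\eps,1),
\]
and the exponent $(d+1)/(2p')+1/p$ is a fixed positive number independent of $\eps$; no choice of weight exponent absorbs it. The point is that the sup of $W_j$ on $\{|v|\sim 2^m\}$ is far too crude --- what matters is the \emph{weighted $L^1$ norm} of $W_j$. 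The paper's argument (in \S\ref{sing-hardy}) uses Minkowski's inequality to pull the $s$- (equivalently $y$-) integral outside, checks that all the polynomial factors $(1+y)^{-(d-1)/2}$, $2^{m(d-1)/p'}$, $r^{(d-1)/p}(1+r)^{-(d-1)/2}$ combine to $O(1)$ on the relevant ranges, and only then extracts $2^{-m\eps}$ from the condition $|y|\sim 2^m$ via $|W_j(y)|\le (1+|y|)^{-\eps}\cdot|W_j(y)|(1+|y|)^\eps$. The remaining $y$-integral is $\int|W_j(y)|(1+|y|)^\eps\,dy\lc B_j(\eps,1)$, and the $r$-integral gives $\|f\chi_{m-j}\|_{L^p(\mu_d)}$. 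In particular the paper obtains the endpoint with $B(\eps)$, not $B(4\eps)$; the factor $4$ in the statement is a cushion for later interpolations (Lemma~\ref{Brevinclusion}), not for losses at this endpoint.

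\medskip
\noindent\textbf{The Lorentz upgrade.} Real interpolation between $T:L^p\to L^p$ and $T:L^p\to L^2$ gives $T:L^p\to L^{q,\sigma}$, not $T:L^{p,\sigma}\to L^{q,\sigma}$, because the source space is the same at both endpoints (and $[L^p,L^p]_{\theta,\sigma}=L^p$). Lemmas~\ref{vectorlor} and~\ref{straightfconvLor} do not address this. The paper's route is two-step: first obtain $L^{\tilde p}\to L^{\tilde q}$ for \emph{all} $\tilde p$ near $p$ (with $\tilde p^{-1}-\tilde q^{-1}=p^{-1}-q^{-1}$) via complex/bilinear interpolation and Lemma~\ref{interpolfloc}; then real-interpolate between two such nearby pairs, using Lemma~\ref{Brevinclusion} to control the constants uniformly. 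That is why the statement carries $B(4\eps(1-\theta),p,q)$ rather than the sharper constant one gets at $p=q$.
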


The square-function estimates  associated to $\{S_{j,n,i}\}_{j\in \bbZ}$
can be seen  as estimates for vector-valued singular integrals under the assumption $B(\eps,p,q)<\infty$, for small $\eps>0$.

\begin{proposition} \label{Sprop}
For $n\in \bbZ$,  $-5\le i\le 5$,
$1<p< 2$,
\Be\label{Sest}
\Big\|\Big(\sum_j|S_{j,n,i} f_j|^2\Big)^{1/2}\Big\|_{L^{q,\sigma}(\mu_d)}
\lc B(\eps, p,q)
\Big\|\Big(\sum_j|f_j|^2\Big)^{1/2}\Big\|_{L^{p,\sigma}(\mu_d)}
\Ee
\end{proposition}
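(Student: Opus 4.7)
My plan is to treat $\{S_{j,n,i}\}_j$ (for fixed $n,i$) as a vector-valued operator of essentially convolution type on the annulus $r\sim s\sim 2^n$, and deduce the square-function bound via complex interpolation between a Plancherel-based endpoint and a Calder\'on-Zygmund endpoint.

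First I would carry out the kernel reduction. On the support of $\chi_n(r)\chi_{n+i}(s)$, Proposition~\ref{kernest} gives nearly constant weight factors $(1+r)^{-(d-1)/2}(1+s)^{-(d-1)/2}\sim 2^{-n(d-1)}$, and the sign combinations containing $r+s$ contribute only a rapidly decaying tail since $r+s\sim 2^{n+1}$ is absorbed by the $(1+|u|)^{-N}$ factor. Up to this admissible error, $S_{j,n,i}$ acts as a convolution in $r-s$ with a smoothing of $\kappa_j$ at scale $2^{-j}$ (modulated by $\chi_n,\chi_{n+i}$ and a tame scalar prefactor).

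Next I would set up the two endpoints. At $q=p$ (so $u(p,p)=1$) the reduced kernel lies in $L^1$ with norm dominated by $B_j(\eps,1)$ (the factor $(1+|x|)^\eps\ge 1$ is harmless), and the vector-valued extension of Calder\'on-Zygmund theory to diagonal $\ell^2$-valued operators (Benedek-Calder\'on-Panzone) yields the $L^p(\ell^2)\to L^p(\ell^2)$ bound with constant $B(\eps,p,p)$. At $q=2$ (so $u(p,2)=2$), one uses Plancherel on the Hankel side, exploiting the bounded overlap in $j$ of the multiplier symbols $\vphi(2^{-j}\rho)m(\rho)$ of the $T^j$, together with a weighted $L^p(\mu_d)\to L^2(\mu_d)$ estimate on each $T^j$ with constant $\lc 2^{jd(1/p-1/2)}B_j(\eps,2)$, to obtain the vector-valued $L^p(\ell^2)\to L^2(\ell^2)$ bound with constant $B(\eps,p,2)$.

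The intermediate cases $p<q<2$ then follow by Stein's complex interpolation of an analytic family whose parameter tracks the $L^u$-exponent of $(1+|x|)^{z\eps}\kappa_j$ between $u=1$ and $u=2$; the resulting exponent satisfies $1/u(p,q)=1-\theta/2$ with $\theta=(1/p-1/q)/(1/p-1/2)$, which matches the formula $1/u=(1/p+1/q-1)/(2/p-1)$ of the statement by direct computation, and the scaling factor $2^{jd(1/p-1/q)}$ comes out of the interpolation between the scalings at the two endpoints. The Lorentz-space version ($\sigma\ne q$) then follows from the $\sigma=q$ case by real (Lions-Peetre) interpolation, exactly as for the Littlewood-Paley inequalities \eqref{LP1}-\eqref{LP2}.

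The main obstacle is the $q=2$ endpoint: isolating the correct $L^p(\mu_d)\to L^2(\mu_d)$ bound for the individual $T^j$ with constant $\lc 2^{jd(1/p-1/2)}B_j(\eps,2)$, as opposed to the weaker $\|T^j\|_{L^2\to L^2}\lc\|\kappa_j\|_{L^1}$, requires a careful exploitation of the $d$-dimensional Hankel structure through the kernel bounds of \S\ref{kernelestimates}, in a way compatible with the subsequent complex interpolation, and a compatible construction of the analytic family that preserves the $\ell^2$-vector-valued structure.
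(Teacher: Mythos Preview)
Your outline is essentially the paper's approach: the $p=q$ endpoint via vector-valued Calder\'on--Zygmund theory (the paper carries out the weak-type $(1,1)$ estimate by hand in \S\ref{sing-hardy} rather than citing Benedek--Calder\'on--Panzone, but it is the same argument), the $L^p\to L^2$ endpoint via the scalar bound \eqref{fixedTjL2} of \S\ref{Lp2est} combined with Minkowski's inequality in $\ell^2$, and then bilinear complex interpolation (\S\ref{interpolsect}) for intermediate $q$, followed by real interpolation for the Lorentz exponent.

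One correction to your kernel reduction: the $\pm r\pm s$ combinations with sign $r+s$ are \emph{not} ``absorbed by the $(1+|u|)^{-N}$ factor''---that weight sits on the convolution variable $u$ in \eqref{defofWj}, not on the argument of $\kappa_j$, so $W_j(2^j(r+s))$ need not be small. The paper does not discard these terms; it treats all four sign combinations on the same footing, and in the H\"ormander-condition step the $r+s$ terms are controlled because $r+s\ge 2^n\gtrsim 2^{L_\nu}$ (recall $J_\nu\subset I_{n+i}$ forces $L_\nu\le n+O(1)$), which is enough to place the integration in the region $|x|\gtrsim 2^{j+L_\nu}$. This does not affect your overall strategy.
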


\medskip
To see that the conditions of Propositions \ref{Eprop} and \ref{Sprop}
are less restrictive than the condition \eqref{global} we note
\begin{lemma}\label{AversB}
Suppose $p< \frac{2d}{d+1}$, $p\le q<2$, and
$\frac 1u=\frac{p^{-1}+q^{-1}-1}{2p^{-1}-1}$. Then there is $\eps=\eps(p,q)>0$ so that
$B_j(\eps,u)\lc A_j(q,\sigma)$, for all $\sigma\le \infty$.
\end{lemma}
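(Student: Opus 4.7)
The plan is to bound $B_j(\eps,u)$ in terms of $A_j(q,\infty)$, since the standard embedding $L^{q,\sigma}(\nu)\hookrightarrow L^{q,\infty}(\nu)$ gives $A_j(q,\infty)\lc A_j(q,\sigma)$ for every $\sigma\le\infty$. Setting $h(x)=(1+|x|)^{-(d-1)/2}\ka_j(x)$ and $d\nu=(1+|x|)^{d-1}\,dx$, I would rewrite the integrand of $B_j(\eps,u)^u$ as
\begin{equation*}
|\ka_j(x)|^u (1+|x|)^{u\eps}\,dx \;=\; |h(x)|^u\, w(x)\, d\nu(x), \qquad w(x):=(1+|x|)^{\alpha},
\end{equation*}
with $\alpha=u(d-1)/2+u\eps-(d-1)$, so that $A_j(q,\sigma)=\|h\|_{L^{q,\sigma}(\nu)}$ is the natural norm to bring out via Hölder.

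Next I would record two arithmetic identities attached to the interpolation parameter $\theta=(1/p-1/q)/(1/p-1/2)\in[0,1)$, namely $1/u=1-\theta/2$ and $1/u-1/q=(1-\theta)/p'$. The second identity is strictly positive ($p>1$, $q<2$), so $u<q$ and in particular $q/u>1$. Lorentz-Hölder on $(\bbR,\nu)$ with Lebesgue exponents $(q/u,(q/u)')$ and Lorentz secondary exponents $(\infty,1)$ then yields
\begin{equation*}
B_j(\eps,u)^u \;\lc\; \||h|^u\|_{L^{q/u,\infty}(\nu)}\, \|w\|_{L^{(q/u)',1}(\nu)} \;=\; A_j(q,\infty)^u\, \|w\|_{L^{(q/u)',1}(\nu)}.
\end{equation*}

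The remaining step is to ensure the weight norm is finite for a suitable $\eps>0$. Since $w$ is a pure power of $1+|x|$, its distribution function with respect to $\nu$ is easily computed (or, equivalently, $\|w\|_{L^{(q/u)',1}(\nu)}$ is comparable to $\|w\|_{L^{(q/u)'}(\nu)}$), and finiteness reduces to the sharp pointwise condition $\alpha<-d/(q/u)'=-d(q-u)/q$. Substituting the definition of $\alpha$ and invoking the two identities above, this inequality simplifies to
\begin{equation*}
\eps \;<\; (1-\theta)\,\frac{2d-(d+1)p}{2p},
\end{equation*}
whose right-hand side is strictly positive precisely when $p<2d/(d+1)$ and $q<2$, both of which are part of the hypotheses. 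Choosing any such $\eps=\eps(p,q)$ delivers the claim.

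The main (though modest) obstacle is purely algebraic: keeping careful track of the relations between $p,q,u,\theta$ so that the integrability threshold supplied by Lorentz-Hölder matches the Stein-Tomas-type restriction $p<2d/(d+1)$ exactly. No structural information about $\ka_j$ beyond its pointwise size enters the argument.
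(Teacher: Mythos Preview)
Your proposal is correct and follows essentially the same approach as the paper: rewrite $B_j(\eps,u)^u$ as an integral of $|h|^u$ against a power weight with respect to $d\nu$, apply the Lorentz--H\"older pairing $L^{q/u,\infty}(\nu)\times L^{(q/u)',1}(\nu)$, and reduce to the integrability of the weight, which holds for small $\eps$ precisely when $p<2d/(d+1)$. Your introduction of $\theta$ and the explicit threshold $\eps<(1-\theta)\tfrac{2d-(d+1)p}{2p}$ is a bit more detailed than the paper's version, but the argument is the same.
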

\begin{proof}
We begin by observing that
$(1+|x|)^{-\alpha}$ belongs to the Lorentz space $L^{\rho,1}(\nu)$
if and only if $\alpha\rho>d$.
Now  write
\[
B_j(\eps,u)= \Big(\int \frac{|\kappa(x)|^u}{(1+|x|)^{u\frac{d-1}{2}}}
(1+|x|)^{\eps u+
\frac{d-1}{2} u +1-d}
 d\nu(x)\Big)^{1/u}
\]
with $d\nu(x)= (1+|x|)^{d-1}$.
Note that by assumption  the $L^{q/u,\infty}(\nu)$ norm of
$|\kappa_j|^u(1+|x|)^{-u(d-1)/2}$ is bounded by
$A_j(q,\infty)^u$. Thus it suffices to check that for sufficiently small
$\eps$ the function
$$V_\eps(x)=(1+|x|)^{\eps u+
\frac{d-1}{2} u +1-d} $$
belongs to $ L^{(q/u)',1}(\nu)$.
This holds under the condition $(d-1)(1-u/2)>d(1-u/q)$.
Since
$ u^{-1}=\frac{p^{-1}+q^{-1}-1}{2p^{-1}-1}$
a straightforward computation shows that the condition is equivalent
to an inequality
which is
independent of $q\in [p,2)$, namely just
$p<2d/(d+1)$.
\end{proof}

For later use let us also  observe that $B(\eps)\equiv B(\eps,p,p)$ is
independent of $p$, namely
\Be \label{Beps}
B(\eps)= \sup_j \|\ka_j\|_{L^1((1+|x|)^\eps dx)}.\Ee
Moreover for   some real interpolations, we shall  need the
following locally uniform control on the constants $B(\eps,p,q)$.

\begin{lemma} \label{Brevinclusion}
Let $1<p\leq q<2$ and $\eps>0$. Then there
exist constants $C,\eta>0$ (depending on $\eps,p,q$) so that for all
$\tp\in(p-\eta,p+\eta)$ and $\tq$ satisfying
$\frac1{\tp}-\tfrac1{\tq}=\frac1p-\frac1q$ we have \[
B(\eps/2,\tp,\tq)\leq \,C\,B(\eps,p,q).
\]
\end{lemma}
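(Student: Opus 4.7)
The plan is to reduce the claim to a $j$-uniform comparison of the kernel norms and then split the argument into two cases according to the relative sizes of $\tilde u=u(\tp,\tq)$ and $u=u(p,q)$. Since the hypothesis $\frac1{\tp}-\frac1{\tq}=\frac1p-\frac1q$ forces the dilation factors $2^{jd(1/\tp-1/\tq)}$ and $2^{jd(1/p-1/q)}$ in \eqref{defofB} to coincide, it suffices to establish $B_j(\eps/2,\tilde u)\le C\,B_j(\eps,u)$ with $C$ independent of $j\in\bbZ$. Inspection of the defining formula $1/u=(1/p+1/q-1)/(2/p-1)$ shows that $u(\cdot,\cdot)$ is continuous and takes values in $[1,2)$ throughout the regime $1<p\le q<2$, so by shrinking $\eta$ I may place $\tilde u$ in any prescribed small neighborhood of $u$.

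In the first subcase $\tilde u<u$ I would apply H\"older's inequality with dual exponents $u/\tilde u$ and $u/(u-\tilde u)$ to the factorization
\[|\kappa_j|^{\tilde u}(1+|x|)^{\tilde u\eps/2}=\big[|\kappa_j|^{u}(1+|x|)^{u\eps}\big]^{\tilde u/u}\cdot(1+|x|)^{-\tilde u\eps/2}.\]
This yields $B_j(\eps/2,\tilde u)\le B_j(\eps,u)\cdot C_1^{(u-\tilde u)/(u\tilde u)}$, where $C_1=\int_\bbR (1+|x|)^{-\tilde u\eps u/(2(u-\tilde u))}\,dx$. The integral is finite precisely when its exponent exceeds $1$, i.e.\ $u-\tilde u<u\tilde u\eps/2$; since the right side tends to $u^2\eps/2>0$ as $\tilde u\to u$, any sufficiently small $\eta$ enforces this, and an elementary computation shows that $C_1^{(u-\tilde u)/(u\tilde u)}$ then stays locally uniformly bounded.

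In the complementary subcase $\tilde u\ge u$ I would first use the pointwise inequality $(1+|x|)^{\eps/2}\le(1+|x|)^\eps$ to reduce matters to $B_j(\eps,\tilde u)\lesssim B_j(\eps,u)$, and then exploit that $\kappa_j=\cF_\bbR^{-1}[\vphi m(2^j\cdot)]$ has one-dimensional Fourier support in the fixed compact set $\supp\vphi$. Picking $\tilde\psi\in\cS(\bbR)$ with $\widehat{\tilde\psi}\equiv 1$ on $\supp\vphi$ produces the reproducing formula $\kappa_j=\kappa_j*\tilde\psi$, and the submultiplicative bound $(1+|x|)^\eps\le(1+|y|)^\eps(1+|x-y|)^\eps$ yields the pointwise estimate
\[|\kappa_j(x)|(1+|x|)^\eps \le \big([|\kappa_j|(1+|\cdot|)^\eps]*[|\tilde\psi|(1+|\cdot|)^\eps]\big)(x).\]
Young's inequality with the triple $(u,s,\tilde u)$ determined by $1+\frac1{\tilde u}=\frac1u+\frac1s$ is applicable since $s\ge 1$ when $\tilde u\ge u$, and $\tilde\psi(1+|\cdot|)^\eps$ being Schwartz has finite $L^s$ norm depending continuously on $s$, which closes the estimate.

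Reassembling the two cases and taking the supremum over $j$ after reinstating the common dilation factor gives the lemma. The main technical point to check will be the locally uniform control of the auxiliary exponents, namely the H\"older exponent $u/(u-\tilde u)$ in the first case and the Young exponent $s$ in the second: both degenerate only in the limit $\tilde u\to u$, and the smallness of $\eta=\eta(\eps,p,q)$ is precisely what keeps the corresponding constants bounded.
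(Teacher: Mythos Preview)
Your proof is correct and follows essentially the same approach as the paper: both split into the cases $\tilde u<u$ (handled by H\"older's inequality, with the smallness of $\eta$ ensuring integrability of the remaining weight) and $\tilde u\ge u$ (handled by the reproducing convolution $\kappa_j=\kappa_j*\tilde\psi$ coming from the fixed Fourier support of $\kappa_j$). The paper invokes Lemma~\ref{straightfconv} for the second case where you use Young's inequality directly, and it phrases the H\"older condition as $\frac{1}{\tilde u}<\frac{1}{u}+\frac{\eps}{2}$, but these are the same arguments.
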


\begin{proof} We first observe that when $u_1\ge
u$\Be\label{Bjinclusion} B_j(\eps, u_1) \leq\,C_\eps\,B_j(\eps,
u).\Ee Indeed, this follows from the fact that the Fourier
transform of $\kappa_j$ is compactly supported and therefore can
be written as a convolution
 with a Schwartz function; we then apply Lemma \ref{straightfconv}.

On the other hand, if $u_1< u$, by H\"older's inequality we have
\Be\label{Bjinclusion_reverse} B_j(\eps/2, u_1)
\leq\,C_{\eps,u,u_1}\,B_j(\eps, u),\Ee provided we choose
$\frac1{u_1}<\frac1u+\frac\eps2$. Now let $p-\eta<\tp<p+\eta$ and
define $\tq$ so that $\tp^{-1}-\tq^{-1}=p^{-1}-q^{-1}$, where
$\eta=\eta(\eps,p,q)>0$ is chosen so that
$|u(\tp,\tq)^{-1}-u(p,q)^{-1}|<\eps/4$. Then using either
\eqref{Bjinclusion} or \eqref{Bjinclusion_reverse} and
$\tp^{-1}-\tq^{-1}=p^{-1}-q^{-1}$ 
the asserted estimate follows. 
\end{proof}


\noi
{\bf Proof of
Theorem \ref{mainhankel},  given Propositions
\ref{Hprop}, \ref{Eprop}, \ref{Sprop}.}
We need to estimate  the square-function on the left hand side
of \eqref{vectval}  with $T^j$ replaced by one of the terms
$\sum_m H_{j,m}$,
$\sum_{n\in\bbZ}\sum_{i=-5}^5 S_{j,n,i}$, and
$\sum_m E_{j,m}$.

Observe that $H_{j,m} f_j = H_{j,m} [f_j\chi_{m-j}] $ and we bound
\begin{align*}
&\Big\|\Big( \sum_j\Big|\sum_m H_{j,m}f_j
\Big|^2\Big)^{1/2}\Big\|_{L^{q,\sigma}(\mu_d)}
\le
\sum_{m}\Big\|\Big( \sum_j\big|H_{j,m}[f_j\chi_{m-j}]\big|^2\Big)^{1/2}
\Big\|_{L^{q,\sigma}(\mu_d)}
\\
&\le
\sum_{m}\Big(\sum_j\big\|H_{j,m}[f_j\chi_{m-j}]
\big\|_{L^{q,\sigma}(\mu_d)}^\om\Big)^{1/\om},\quad \om=\min\{q,\sigma\}.
\end{align*}
Here we have used Minkowski's inequality for the $m$-summation,
followed by Lemma \ref{vectorlor}.
Let $\delta(p)=\min\{d/p', d(1/p-1/2)-1/2\}$ then $\delta(p)>0$ for $1<p<\frac{2d}{d+1}$ and by Proposition \ref{Hprop}
the last expression in the displayed formula is bounded by
$C A(p,q,\sigma)$ times
\begin{align*}
&\sum_{m\in \bbZ} 2^{-|m|\delta(p)}
\Big(\sum_j\big\|f_j\chi_{m-j}\big\|_{L^{p,\infty}(\mu_d)}^\om\Big)^{1/\om}
\\&
\lc\sum_{m\in \bbZ} 2^{-|m|\delta(p)}
\Big(\sum_j\big\|f_j\chi_{m-j}\big\|_{L^{p,\om}(\mu_d)}^\om\Big)^{1/\om}
\\&\lc\sum_{m\in \bbZ} 2^{-|m|\delta(p)}
\Big\|\sup_{j}|f_j\chi_{m-j}|\Big\|_{L^{p,\om}(\mu_d)}
\\&\lc \Big\|\sup_j |f_j|\Big\|_{L^{p,\om}(\mu_d)}
\lc  \Big\|\Big(\sum_j |f_j|^2\Big)^{1/2}\Big\|_{L^{p,\om}(\mu_d)} .
\end{align*}
Here, in order to bound the second expression, we have used
\eqref{quasinormlor}, and the assumption that $\om\ge p$, together with the
disjointness of the intervals $[2^{m-j}, 2^{m-j+1})$.
This  completes the proof of the $L^{p,\om}(\ell^2, \mu_d)\to
L^{q,\sigma}(\ell^2, \mu_d)$ bound for
$\{\sum_m H_{j,m}f_j\}_{j\in \bbZ}$.
The  terms $\{\sum_m E_{j,m}f_j\}_{j\in \bbZ}$ are estimated similarly,
given Proposition \ref{Eprop} and
Lemma \ref{AversB}.

Concerning the
terms $S_{j,n,i}$, let us consider the $L^p\to L^q$ estimates.
We recall
$S_{j,n,i}f_j
 = \chi_n S_{j,n,i} [f_j\chi_{n+i}]$ and use Proposition \ref{Sprop}, for fixed $i$, and $n$.
In view of the cutoffs $\chi_n(r)$, $\chi_{n+i}(s)$, $-5\le i\le 5$
the uniform Lebesgue space  estimate of Proposition
\ref{Sprop} also gives an $L^p(\mu_d)$ estimate for the sum,
\[
\Big\|\Big(\sum_j
\Big|\sum_{n} S_{j,n,i} f_j\Big|^2\Big)^{1/2}
\Big\|_{L^q(\mu_d)} \le C_{\eps,p} B(\eps,p,q)
\Big\|\Big(\sum_j|f_j|^2\Big)^{1/2}
\Big\|_{L^p(\mu_d)}.
\]
We sum in $i\in\{-5,\dots, 5\}$ and by
Lemma \ref{AversB} we obtain the desired $L^p\to L^q$
estimate for the
singular integral part  in the range $1<p<2d/(d+1)$.
By real interpolation (and Lemma \ref{Brevinclusion}) this extends to the
$L^{p,\sigma}\to L^{q,\sigma}$  estimates.



\section{ Proof of  Proposition \ref{Hprop}}
\label{Hardybounds}
Let $I_n=[2^n, 2^{n+1}]$,  and $ \cR_{n}=[2^n,\infty)$.
We estimate
\begin{align}
&\Big\|\sum_{n>m-j+5} \chi_n T^j[f\chi_{m-j}]\Big\|_{L^{q,\sigma}(\mu_d)}\notag
\\&\le
\Big\| \chi_{\cR_{m-j+5}} \int 2^{jd} |\cK_j(2^j\cdot, 2^js)| |f(s)|
\chi_{m-j}(s) s^{d-1} ds \Big\|_{L^{q,\sigma}(\mu_d)}
\notag
\\&= 2^{-jd/q}
\Big\| \chi_{\cR_{m+5}} \int_{I_m}  |\cK_j(\cdot, s)| |f(2^{-j}s)|
 s^{d-1} ds \Big\|_{L^{q,\sigma}(\mu_d)}
\label{afterscaling}
\end{align}
by changes of variables in $s$ and $r$.

We now use the kernel estimate of Proposition
\ref{kernest} and set
\Be\label{defofWj} W_j(x)= \int \frac{|\ka_j(x-u)|}{(1+|u|)^N} du.\Ee
We apply Minkowski's inequality
(i.e. the continuous form of the triangle inequality in the Lorentz space
$L^{q,\sigma}$ which is a Banach space) and see that the expression
\eqref{afterscaling} is controlled by
\[
2^{-jd/q}
\int_{I_m}|f(2^{-j}s)| \frac{s^{d-1}}{(1+s)^{\frac{d-1}{2}}}
 \sum_{(\pm,\pm)}
\Big\| \chi_{\cR_{m+5}}
\frac{W_j(\pm \cdot \pm s)}{(1+\cdot )^{\frac{d-1}{2}}} \Big\|_{L^{q,\sigma}(\mu_d)} ds.
\]
It is now crucial that in the inner norm the functions are restricted to
the set where $r\ge 2^{m+5}$
while $s\le 2^{m+1}$. We may therefore change variables and use the bound
$(1+|r-s|)\ge c (1+r)$ in this range, so that
\[
\Big\| \chi_{\cR_{m+5}}
\frac{W_j(\pm \cdot \pm s)}{(1+\cdot )^{\frac{d-1}{2}}} \Big\|_{L^{q,\sigma}(\mu_d)}
\lc \Big\|
\frac{W_j}{(1+|\cdot| )^{\frac{d-1}{2}}} \Big\|_{L^{q,\sigma}(\nu)},\quad
s\le2^{m+1},
\]
where
$d\nu= (1+|x|)^{d-1} dx$.
By Lemma
\ref{straightfconvLor}
the term on the right hand side is also controlled by
$\big\|\ka_j(1+|\cdot| )^{-\frac{d-1}{2}} \big\|_{L^{q,\sigma}(\nu)}$, which is
 $A_j(q,\sigma)$.

Thus we see that the expression \eqref{afterscaling} is bounded by
\[C 2^{j d(1/p-1/q)} A_j(q,\sigma)
\int_{I_m} 2^{-jd/p}|f(2^{-j}s) |
(1+s)^{-(d-1)/2} s^{d-1}ds.\]
It remains to bound the $s$-integral. It is easy to check that the restriction
of
$\Omega(s)= (1+s)^{-(d-1)/2}$ to the interval $I_m$ belongs to
$L^{p',1}(I_m, \mu_d)$ and satisfies the bounds
\[\big\|\chi_m \Omega\big\|_{L^{p',1}(\mu_d)}\lc
\begin{cases} 2^{-m (d(1/p-1/2)-1/2)}
&\text{ if } m\ge 0,
\\ 2^{md/p'}&\text{ if } m\le 0,
\end{cases}
\]
and thus, by duality
\begin{align*}
&\int_{I_m} 2^{-jd/p}|f(2^{-j}s) |  \frac{s^{d-1}}{(1+s)^{\frac{d-1}2}} ds
\le \|\chi_m \Omega \|_{L^{p',1}(\mu_d)} \|2^{-jd/p}f(2^{-j}\cdot)
\|_{L^{p,\infty}(\mu_d)}
\\
&\lc \min\{ 2^{-m (d(1/p-1/2)-1/2)}, 2^{md/p'}\} \|f\|_{L^{p,\infty}(\mu_d)}.
\end{align*}
This finishes the proof.\qed
\section{More $L^p$ estimates}
\label{sing-hardy}

In this section we consider the case $p=q$ of Propositions
\ref{Eprop} and \ref{Sprop}; the  general case will be handled
in \S\ref{interpolsect}. The results of this section together with the previous section complete the proof of Theorem \ref{mainhankel} in the case $p=q$.
In what follows we shall  assume $p=\sigma$
in the proof of Proposition \ref{Sprop}
 since the $L^{p,\sigma}$ boundedness results follow then  by interpolation and replacing
$\eps$ with $\eps/2$. Moreover we prove the statement of Proposition 
\ref{Eprop} for the case 
$p=q=\sigma$
with the constant
with the constant $B(\eps)$ (rather than $B(4\eps)$), and  the factor $4$
is included in the statement of the proposition 
to account 
for  interpolations needed for  the general case (\cf. also Lemma 
\ref{Brevinclusion}).

\begin{proof} [\bf Proof of  Proposition
\ref{Eprop}, $p=q=\sigma$]
We begin  with the estimate \eqref{afterscaling}
which is still valid but continue differently since now $n+j\le m-5$, thus $r\ll s$. Let $I_m^*=[2^{m-1}, 2^{m+2}]$.
Set $ h_{p,j}(s) = 2^{-jd/p} f(2^{-j }s) s^{\frac{d-1}{p}}.$ Then the right hand side of \eqref{afterscaling} is estimated by
\begin{align*}
&\sum_{(\pm,\pm)}
\Big(\sum_{n\le m-j-5}\int_{I_{n+j}}\Big|\int_{I_m}
\frac{ |W_j(\pm r\pm s)|}{(1+s)^{\frac{d-1}{2}}}
\frac{h_{p,j}(s)s^{(d-1)/p'}} {(1+r)^{\frac{d-1}{2}}}
ds \Big|^p r^{d-1}dr\Big)^{1/p}
\\
&\lc\sum_{(\pm,\pm)}
\Big(\int_0^{2^{m-3}}\Big|\int_{I_m^*}
\frac{ |W_j(\pm y)|}{(1+y)^{\frac{d-1}{2}}}
[\chi_m h_{p,j}](y\pm r)
dy \Big|^p \frac{2^{m(d-1)\frac {p}{p'}}    r^{d-1}}
{(1+r)^{\frac{d-1}{2}p}}dr\Big)^{1/p}.
\end{align*}
If $m>0$ this is dominated by
\begin{align*}&C\sum_{\pm}
 2^{-m\eps}
\int|W_j(y)|(1+|y|)^\eps
\Big(\int\big|[\chi_m h_{p,j}](y\pm r) \big|^p dr\Big)^{1/p}dy
\\&\lc  2^{-m\eps}
\|\ka_j\|_{L^1((1+|\cdot|)^\eps dy)}
\|f\chi_{m-j}\|_{L^p(\mu_d)}.
\end{align*}
If $m<0$ we may instead estimate $2^{m(d-1)p/p'}r^{d-1}\le 2^{m(d-1)p}$; this yields the bound
\[2^{m(d-1)}
\|\ka_j\|_1
\|f\chi_{m-j}\|_{L^p(\mu_d)}\]
instead. This finishes the proof.
\end{proof}

\begin{proof} [\bf Proof of  Proposition
\ref{Sprop}, $p=q=\sigma$]
We use standard arguments for singular integrals for $\ell^2$-valued
kernels and functions. First, by orthogonality,
\begin{align*}
&\Big\|\Big(\sum_j|S_{j,n,i} f_j|^2\Big)^{1/2}
\Big\|_{L^2(\mu_d)}
\le \Big\|\Big(\sum_j\big|T^j[ f_j\chi_{n-i}]
\big|^2\Big)^{1/2}\Big\|_{L^2(\mu_d)}
\\
&\lc \sup \|\widehat \kappa_j\|_\infty
\Big\|\Big(\sum_j| f_j|^2\Big)^{1/2}\Big\|_{L^2(\mu_d)}.
\end{align*}
To prove the $L^p(\mu_d)$ bounds for $1<p<2$ it suffices,
by the Marcinkiewicz interpolation theorem,  to prove
the weak type $(1,1)$ inequality
\Be\label{weaktype}\mu_d\Big(\Big\{r:
\Big(\sum_j|S_{j,n,i} f_j|^2\Big)^{1/2}>\la\Big\}\Big)\lc B\la^{-1}
\Big\|\Big(\sum_j|f_j|^2\Big)^{1/2}\Big\|_{L^1(\mu_d)};
\Ee
here $B=B(\eps)$ as in \eqref{Beps}.

Set $h_j(s)=f_j(s) (2^{-n} s)^{d-1}\chi_{n+i}(s)$, so that $|h_j|$ and $|f_j|$ are of comparable size on $I_{n+i}$.
For fixed $\la>0$ we make a Calder\'on-Zygmund decomposition of the $\ell^2$ valued function $\{h_j\}$, at height
$\la/B$  (see \cite{stein-si}). We thus decompose  $h_j= g_j+b_j$
where  $\|\vec g\|_{L^\infty(\ell^2)} \le \la/B$,
  $\|\vec g\|_{L^1(\ell^2, ds)} +
\|\vec b\|_{L^1(\ell^2,ds)} \lc
\|\vec h\|_{L^1(\ell^2, ds)} $.
Furthermore  $b_j= \sum_\nu
b_{j,\nu}$ so that $b_{j, \nu} $ is supported in a dyadic subinterval
$J_\nu$
 of $I_{n+i}$, with center $s_\nu$ and length $2^{L_\nu}$. The interiors of the intervals  $J_\nu$ are disjoint, and we have
$|J_\nu|^{-1}\int_{J_{\nu}}|\vec b_{\nu}(s)|_{\ell^2}ds\lc \la/B$
and
$\sum_\nu |J_\nu|\le B\la^{-1}\|\vec h\|_{L^1(\ell^2,ds)}$.
Finally $\int b_{j,\nu}ds=0$ for all $j,\nu$.

Note that $S_{j,n,i} f_j=
S_{j,n,i} \vec \gmod_j+ \sum_\nu S_{j,n,i}
\vec \bmod_{j,\nu}$ where
$\gmod_j(s)=g_j(s)(2^n/s)^{d-1}$ and
$\bmod_{j,\nu}(s)= b_{j,\nu}(s) (2^n/s)^{d-1}$.
We estimate
\begin{align}
&\mu_d(\{r\in I_{n+i}: |\{S_{j,n,i} \gmod_j(r)\}|_{\ell^2} >\la/2\})
\lc \la^{-2} B^2 \|\vec \gmod\|_{L^2(\ell^2,\mu_d)}^2
\notag
\\&\lc \la^{-1} B \|\vec \gmod\|_{L^1(\ell^2,\mu_d)}
\lc \la^{-1} B \|\vec f\|_{L^1(\ell^2,\mu_d)}.
\label{goodfunctionest}
\end{align}

For each interval $J_\nu$ let $J_\nu^*$ denote the interval with
same center and tenfold length. Also let $\Omega=\cup_\nu J_{\nu}^*$ then
\Be\label{measexcset}
\mu_d(\Omega) \lc 2^{n(d-1)}\sum|J_\nu|
\lc B\la^{-1}2^{n(d-1)}\|\vec h\|_{L^1(\ell^2,ds)}
\lc B\la^{-1}\|\vec h\|_{L^1(\ell^2,\mu_d)}.
\Ee

It remains to estimate

\begin{align}\label{offexcset}
&\mu_d\Big(\Big\{r\in I_n\setminus\Omega:
\Big(\sum_j\big|S_{j,n,i} \big[\sum_\nu \bmod_{j,\nu}\big]
\big|^2\Big)^{1/2}>\la/2\Big\}\Big)
\\
\notag
&\lc \la^{-1} \int_{I_n\setminus \Omega} \Big(\sum_j\Big|S_{j,n,i}
 \big[\sum_\nu
\bmod_{j,\nu}\big]\Big|^2\Big)^{1/2} r^{d-1} dr
\\
\label{badfunctionbd}
&\lc
\la^{-1} 2^{n(d-1)} \sum_\nu\sum_j \int_{I_n\setminus J_\nu^*}
\big|S_{j,n,i}\bmod_{j,\nu}(r)\big| dr.
\end{align}
Note that
\[S_{j,n,i}\bmod_{j,\nu}(r)= 2^{n(d-1)}\int 2^{jd} \cK_j(2^j r, 2^j s) b_{j,\nu}(s) ds.\]
To estimate the integral in \eqref{badfunctionbd}
we distinguish the cases $j\ge -L_\nu$,
$j\le -L_\nu$. Note that
 $L_\nu\le n+5$ as $J_\nu\subset I_{n+i}$.

If $j\ge -L_{\nu}$ ($\ge -n-5$) we use the kernel estimate of Proposition
\ref{kernest} and obtain, with  the notation  $W_j$ in \eqref{defofWj} and $r,s\approx 2^n$
\begin{align*}
|S_{j,n,i}\bmod_{j,\nu}(r)|&\lc \sum_{\pm,\pm}
\frac{2^{jd}2^{n(d-1)}}{(1+2^jr)^{\frac{d-1}{2}} (1+2^js)^{\frac{d-1}{2}}}
\int W_j(\pm 2^jr\pm 2^j s)
|b_{j,\nu}(s)| ds
\\&\lc \sum_{\pm,\pm} \int 2^{j} W_j(\pm 2^jr\pm 2^j s)
|b_{j,\nu}(s)| ds
\end{align*}
and if $r\notin J_\nu^*$ then $|r-s|\approx |r-s_\nu|>2^{L_\nu}$.
Consequently
\begin{align}\notag \int_{I_n\setminus J_\nu^*}
\big|S_{j,n,i}\bmod_{j,\nu}(r)\big| dr
&\lc \int_{|x|>2^{j+L_\nu}}|W_j|(x) dx \,\int|b_{j,\nu}(s)|ds
\\ \label{jlarge} &\lc 2^{-(j+L_\nu)\eps}B(\eps)
\big\| b_{j,\nu}\big\|_{L^1(ds)}.
\end{align}
If $j<-L_\nu$ we use the cancellation of the $b_{j,\nu}$
to write
\begin{align*}
\big|S_{j,n,i}\bmod_{j,\nu}(r)\big|
= 2^{n(d-1)}
\Big|\int 2^{jd} \big[
\cK_j(2^j r, 2^j s) -\cK_j(2^j r, 2^j s_\nu) \big]
b_{j,\nu}(s) ds\Big|&
\\ \lc 2^{n(d-1)}2^{j+L_\nu} \int_{\sigma=0}^1\int 2^{jd} \big|
\partial_s\cK_j(2^j r, 2^j(s_\nu+\sigma(s-s_\nu)))\big|
\, \big|b_{j,\nu}(s)|ds&.
\end{align*}
We now argue as before, but use Proposition \ref{kernest} to estimate
$\partial_s\cK_j$ and we obtain for $j\le -L_\nu$
\begin{align}
\notag
&\int_{I_n\setminus J_\nu^*}
\big|S_{j,n,i}\bmod_{j,\nu}(r)\big| dr
\\
\notag& \lc
\int\big|b_{j,\nu}(s)|ds\,2^{j+L_\nu} 2^{n(d-1)}\sum_{(\pm,\pm)}
\sup_a\int \frac{2^{jd} W_j(\pm2^j r\pm2^j a)}
{1+ 2^{(j+n)(d-1)}} dr
\\
\notag& \lc
\int\big|b_{j,\nu}(s)|ds\,2^{j+L_\nu} \sum_{(\pm,\pm)}
\sup_a\int 2^j W_j(\pm2^j r\pm2^j a) dr
\\& \label{jsmall} \lc B(0) 2^{j+L_\nu}\|b_{j,\nu} \|_{L^1(ds)}.
\end{align}

We can sum the  terms  \eqref{jlarge} and \eqref{jsmall}
in $j$ and obtain
\begin{align*}
&\sum_j\int_{I_n\setminus J_\nu^*}
\big|S_{j,n,i}\bmod_{j,\nu}(r)\big| dr
\\&\lc B(\eps) \sum_{j} \min\{  2^{j+L_\nu}, 2^{-(j+L_\nu)\eps}\}
\big\| b_{j,\nu}\big\|_{L^1(ds)}
\lc B(\eps)  \big\| \vec b_{\nu}\big\|_{L^1(\ell^2,ds)}.
\end{align*}
Now we sum in $\nu$ and get the required $L^1(\mu_d)$ bound off $\Omega$.
The expression \eqref{badfunctionbd} is thus dominated by
\begin{align*}
&\la^{-1}B(\eps) \sum_\nu 2^{n(d-1)}\big\| \vec b_{\nu}\big\|_{L^1(\ell^2,ds)}
\\&\lc \sum_\nu |J_\nu|2^{n(d-1)} \lc
\la^{-1}B(\eps) 2^{n(d-1)} \int_{I_n}|\vec h(s) |_{\ell^2}ds
\\&\lc
\la^{-1}B(\eps)  \int_{I_n}|\vec h(s) |_{\ell^2}s^{d-1}ds.
\end{align*}
This bounds the expression \eqref{offexcset} by
$C B(\eps)\la^{-1}
\| \vec f\|_{L^1(\ell^2,\mu_d)}$. Combining this bound with
\eqref{goodfunctionest} and \eqref{measexcset} yields
the desired  weak type $(1,1)$ bound \eqref{weaktype}.
\end{proof}

\section{$L^p\to L^2$ estimates}\label{Lp2est}
In this section we prove some sharp $L^p\to L^2$ bounds for Hankel multipliers.

\begin{theorem}
\label{Lp2thm}
Let $d>1$.

(i) Suppose  $1<p<\frac{2d}{d+1}$. Then  $m\in \fM_d^{p,2}$ if and only if
\Be \label{LpL2cond}
\sup_{t>0} \, t^{d(\frac 1p-\frac 12)}
\Big(\int_t^{2t}|m(\rho)|^2 \frac{d\rho}{\rho}\Big)^{1/2}<\infty.
\Ee

(ii) Let $p_d=\frac{2d}{d+1}$. Then the operator  $T: f\mapsto
\cB_d[m\cB_d f]$ maps the Lorentz space
$L^{p_d,1}(\mu_d)$ to $L^2(\mu_d)$ if and only if
\eqref {LpL2cond} holds for $p=p_d$.
\end{theorem}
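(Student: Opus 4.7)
Write $A$ for the finite supremum in \eqref{LpL2cond}. For necessity in either part, I test on $f_t(r)=t^{-d/p}(\cB_d\psi)(r/t)$ for a fixed nontrivial $\psi\in C_c^\infty((1/2,2))$. The dilation formula \eqref{dil} together with $\cB_d^2=\Id$ gives $\cB_d f_t(\rho)=t^{d/p'}\psi(t\rho)$, while scale-invariance makes $\|f_t\|_{L^{p,1}(\mu_d)}$ a finite constant independent of $t$. Plancherel on the Hankel side then yields
\[
\|T_m f_t\|_{L^2(\mu_d)}=t^{d/p'}\|m\,\psi(t\cdot)\|_{L^2(\mu_d)}\approx t^{d(1/2-1/p)}\Bigl(\int_{1/(2t)}^{2/t}|m|^2\frac{d\rho}{\rho}\Bigr)^{1/2},
\]
which after the substitution $s=1/t$ matches \eqref{LpL2cond} exactly.

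\textbf{Sufficiency: main reduction.} Using the partition $\sum_j\vphi^2(2^{-j}\cdot)=1$ from \eqref{speccutoff} and Plancherel,
\[
\|T_m f\|_{L^2(\mu_d)}^2=\|m\,\cB_d f\|_{L^2(\mu_d)}^2=\sum_j\|T_{n_j}\tL_j f\|_{L^2(\mu_d)}^2,
\]
where $n_j:=\vphi(2^{-j}\cdot)m$ and $\tL_j$ is a Littlewood-Paley projection whose cutoff equals $1$ on $\supp\vphi(2^{-j}\cdot)$. The proof reduces to the uniform estimate
\Be\label{planMj}
\|T_{n_j}g\|_{L^2(\mu_d)}\lc A\,\|g\|_{L^{p,\om}(\mu_d)},
\Ee
with $\om=p$ in (i) and $\om=1$ in (ii). Combined with the Lorentz Littlewood-Paley inequality \eqref{LP1} in $L^{p,\om}$ and the standard $2$-concavity of $L^{p,\om}$ (valid for $p<2$ and $\om\le 2$), \eqref{planMj} gives $\|T_m f\|_{L^2}^2\lc A^2\sum_j\|\tL_j f\|_{L^{p,\om}}^2\lc A^2\|f\|_{L^{p,\om}}^2$, which is the desired bound.

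\textbf{Reduction to a compactly supported block.} The dilation identity $T_{m(\cdot/t)}=D_t\,T_m\,D_{1/t}$ with $t=2^j$ conjugates $T_{n_j}$ to $T_{\tilde n_j}$, where $\tilde n_j(\sigma):=\vphi(\sigma)m(2^j\sigma)$ is supported in $[1/2,2]$; a direct change of variable shows that \eqref{LpL2cond} yields $\|\tilde n_j\|_{L^2(\bbR)}\lc 2^{-jd(1/p-1/2)}A$. Tracking the scaling factors $2^{-jd/2}$ (from $D_{2^j}$ on $L^2(\mu_d)$) and $2^{jd/p}$ (from $D_{2^{-j}}$ on $L^{p,\om}(\mu_d)$) reduces \eqref{planMj} to the single building-block estimate
\Be\label{planbblock}
\|T_{\tilde m}\|_{L^{p,\om}(\mu_d)\to L^2(\mu_d)}\lc\|\tilde m\|_{L^2(\bbR)}
\Ee
for every $\tilde m$ supported in $[1/2,2]$. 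For this, Proposition \ref{kernest}, combined with $r^{d-1}\le(1+r)^{d-1}$, gives
\[
|T_{\tilde m}f(r)|\,(1+r)^{(d-1)/2}\lc\sum_{\pm,\pm}\int_0^\infty|f(s)|(1+s)^{-(d-1)/2}s^{d-1}\,W(\pm r\pm s)\,ds,
\]
with $W=|\kappa|*(1+|\cdot|)^{-N}$ and $\kappa=\cF_\bbR^{-1}\tilde m$, so that $\|W\|_{L^2(\bbR)}\lc\|\kappa\|_{L^2(\bbR)}=\|\tilde m\|_{L^2}$ by Plancherel on $\bbR$. Young's inequality then bounds $\|T_{\tilde m}f\|_{L^2(\mu_d)}\lc\|\tilde m\|_{L^2}\|f(1+\cdot)^{-(d-1)/2}\|_{L^1(\mu_d)}$, and H\"older closes the argument: the weight $(1+s)^{-(d-1)/2}$ lies in $L^{p'}(\mu_d)$ iff $(d-1)p'/2>d$ iff $p<p_d=2d/(d+1)$, handling (i); at the critical exponent the weight lies exactly in $L^{p_d',\infty}(\mu_d)$, which pairs with $L^{p_d,1}(\mu_d)$ by the Lorentz H\"older inequality, handling (ii). The delicate point is precisely this endpoint: the borderline integrability of $(1+s)^{-(d-1)/2}$ against $\mu_d$ forces the source space in (ii) to be the Lorentz space $L^{p_d,1}$ rather than $L^{p_d}$, and invoking the $2$-concavity of $L^{p_d,1}$ to assemble the Littlewood-Paley pieces is the main technical step.
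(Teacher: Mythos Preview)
Your argument is correct and follows essentially the same route as the paper: the single-block estimate is obtained from the kernel bound of Proposition~\ref{kernest} via Minkowski/Young, then paired with the weight $(1+s)^{-(d-1)/2}$ by H\"older (respectively Lorentz--H\"older at the endpoint), and the pieces are reassembled by orthogonality on the $L^2$ side together with a Littlewood--Paley inequality on the $L^{p,\omega}$ side. Your explicit appeal to the $2$-concavity of $L^{p_d,1}$ in part~(ii) makes precise a step the paper only sketches (there the reduction to characteristic functions and the reference to \eqref{LPapplication} hide exactly this point).
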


\noi{\it Remark.} It is easy to see that the condition \eqref{LpL2cond} is equivalent to
\Be \label{LpL2condmod}
\sup_{t>0}t^{d(\frac 1p-\frac 12)}\|\phi m(t\cdot)\|_2 <\infty
\Ee
for some nontrivial, smooth $\phi$ with compact support in $(0,\infty)$.
\begin{proof}[Proof of Theorem  \ref{Lp2thm}]
We first prove (i). The necessity of the condition has already
been established  in \S\ref{easy}. For the proof of the sufficiency let
 $T^j$ be as in \eqref{Tjdef}. We  then show the  estimate
\Be \label{fixedTjL2}\big\|T^j f\big\|_{L^2(\mu_d)} \lc A_j(p,2) \|f\|_{L^p(\mu_d)}
\Ee
where $A_j(p,2)= 2^{jd(\frac 1p-\frac 12)}\|\vphi m(2^j\cdot)\|_2 $.
Note that by Plancherel's theorem and the argument of Lemma
\ref{indepofphi} the condition
$\sup_j A_j(p,2)<\infty$ is equivalent with \eqref{LpL2cond}
(and also with \eqref{LpL2condmod}).
Now,
\begin{align*}
&\big\|T^j f\big\|_{L^2(\mu_d)}
= \Big(\int
\Big[\int 2^{jd} \cK_j(2^jr, 2^j s) f(s) s^{d-1} ds\Big]^2 r^{d-1}dr\Big)^{1/2}
\\
&= 2^{-jd/2}
\Big(\int
\Big[\int  \cK_j(r,  s) f(2^{-j}s) s^{d-1} ds\Big]^2r^{d-1} dr\Big)^{1/2}
\\
&\lc  2^{-jd/2} \sum_{(\pm,\pm)}\int_0^\infty |f(2^{-j}s)
|\frac{s^{d-1}}{(1+s)^{\frac{d-1}{2}}} \Big(\int_0^\infty
\Big|
\frac{W_j(\pm r\pm s)}{(1+r)^{\frac{d-1}{2}}}\Big|^2 r^{d-1}dr\Big)^{1/2}
\, ds
\end{align*}
where
for the last bound we used Minkowski's inequality and the kernel estimate from Proposition \ref{kernest}. The last expression is controlled by
\begin{align}\label{duality}
&2^{-j d/2}\|\ka_j\|_2 \int|f(2^{-j}s)| \frac{s^{d-1}}{(1+s)^{\frac{d-1}{2}}} ds
\\ \notag
&\le
2^{-j d/2}\|\ka_j\|_2
\Big(\int|f(2^{-j}s)|^p s^{d-1} ds\Big)^{1/p}
\Big(\int \frac{s^{d-1}}{(1+s)^{\frac{d-1}{2}p'}} ds\Big)^{1/p'}
\end{align} and the second  integral in the last line is finite for  $p<\frac{2d}{d+1}$.
Changing variables  we  obtain
\[\big\|T^j f\big\|_{L^2(\mu_d)}\lc
2^{jd(1/p- 1/2)}\|\ka_j\|_2  \|f\|_{L^p(\mu_d)}.\]

We now use orthogonality and Littlewood-Paley theory, writing
$L_j f= \cB_d[\chi(2^{-j} \cdot)\cB_d f]$ and $T^j= L_jT^j L_j$ to get
\begin{align*}\|Tf\|_{L^2(\mu_d)}
&\lc\Big(\sum_j\|T^j L_j f\|_{L^2(\mu_d)}^2\Big)^{1/2}
\\&\lc \sup_j 2^{jd(1/p- 1/2)}\|\ka_j\|_2
\Big(\sum_k\| L_k f\|_{L^p(\mu_d)}^2\Big)^{1/2}
\end{align*}
and the argument is concluded by observing that for $1<p\le 2$
\Be\label{LPapplication}
\Big(\sum_k\| L_k f\|_{L^p(\mu_d)}^2\Big)^{1/2}\le
\Big\|\Big(\sum_k|L_k f|^2\Big)^{1/2}\Big\|_{L^p(\mu_d)} \le C_p
\|f\|_{L^p(\mu_d)}.
\Ee

The proof of (ii) is largely analogous. We may assume that $f$ is the
characteristic function of a measurable set $E$. The difference is
the estimate \eqref{duality}. We now observe that the function
$\om_d(s)=(1+s)^{-\frac{d-1}{2}}$ belongs to the space
$L^{p_d',\infty}(\mu_d)$ and by the duality between $L^{p_d,1}$ and
$L^{p_d',\infty}$
we use instead
\[ \int|\chi_E(2^{-j}s)| \frac{s^{d-1}}{(1+s)^{\frac{d-1}{2}}} ds
\lc
\big\|\chi_E(2^{-j}\cdot)\big\|_{L^{p_d,1}(\mu_d)}
\big\|\om_d\big\|_{L^{p_d',\infty}(\mu_d)} \]
which is $\lc [2^{jd} \mu_d(E)]^{1/p}$.
The subsequent Littlewood-Paley argument is the same; we use $f=\chi_E$ in \eqref{LPapplication}.
\end{proof}

\noi{\it Sharpness.} The restricted strong type $(p_d,2)$-estimate is sharp,
as the
Lorentz space $L^{p_d,1}$ cannot be replaced by $L^{p_d,\sigma}$ for $\sigma>1$.
To see this let $m_N(\rho)=\sqrt N \chi_{[1,1+cN^{-1}]}$ so that the condition
\eqref{LpL2cond} is satisfied uniformly in $N$.
Let $f_N(s)= s^{-(d+1)/2} e^{-is}\chi_{[1,N]}(s)$. Then one computes that
\[\|f_N\|_{L^{p_d,\sigma}(\mu_d)} \lc (\log N)^{1/\sigma}\] and
using the asymptotic expansion \eqref{besselasympt} one computes that
\[
\cB_d f_N(\rho) = c\int_1^N e^{i(\rho-1) s} \frac{ds}s +O(1)
\]
for $\rho$ near $1$ (observe that the corresponding integral with phase $-(\rho+1)s$ is bounded near $\rho=1$, by an integration by parts). Thus $|\cB_d f_N(\rho)|\gc \log N$ for
$|\rho-1|\le cN^{-1}$ (if $c$ is sufficiently small). Consequently
\[\|\cB_d[m_N \cB_d f_N]\|_{L^2(\mu_d)}
\approx \|m_N \cB_d f_N\|_{2} \gc \log N\]
which implies the assertion.

\medskip

\noi{\it Analogue for radial Fourier multipliers.}
We also note that an analogue of Theorem \eqref{Lp2thm} holds for radial Fourier multipliers acting on general $L^p(\bbR^d)$ functions, namely there is the 
\lq folk' result
\begin{observation} Suppose that $1<p\le \frac{2(d+1)}{d+3}$.
Then the operator $f\mapsto \cF^{-1}[m(|\cdot|)\widehat f]$
extends to a bounded operator from $L^p(\bbR^d)$ to $L^2(\bbR^d)$ if
and only if \eqref{LpL2cond} holds.
\end{observation}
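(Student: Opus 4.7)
The plan is to reduce necessity to the radial case via Theorem~\ref{Lp2thm}, and to obtain sufficiency by combining the Stein--Tomas restriction theorem with Fourier orthogonality and a dual Littlewood--Paley inequality. Necessity is immediate: if $T_m$ is bounded from $L^p(\bbR^d)$ to $L^2(\bbR^d)$, its restriction to radial $L^p$ functions yields a bounded Hankel multiplier from $L^p(\mu_d)$ to $L^2(\mu_d)$, and Theorem~\ref{Lp2thm}(i) (together with the equivalent form \eqref{LpL2condmod}) forces \eqref{LpL2cond}.

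For sufficiency, I would fix $\vphi\in C^\infty_c(\tfrac12,2)$ with $\sum_j \vphi(\rho/2^j)=1$ on $(0,\infty)$ and set $m_j(\rho):=\vphi(\rho/2^j)m(\rho)$, so that $m=\sum_j m_j$. Let $\tilde P_j$ denote a smooth frequency cutoff supported in $\{|\xi|\sim 2^j\}$ and equal to $1$ on $\supp m_j(|\cdot|)$; then $T_{m_j}f=T_{m_j}\tilde P_j f$ and $\widehat{T_{m_j}f}$ is supported in $\{|\xi|\in[2^{j-1},2^{j+1}]\}$. The key step is the annular building block
$$\|T_{m_j}f\|_{L^2(\bbR^d)}\;\lc\; A\,\|\tilde P_jf\|_{L^p(\bbR^d)},$$
where $A$ denotes the supremum in \eqref{LpL2cond}. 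To prove it, expand $\|T_{m_j}f\|_{L^2}^2$ via Plancherel in polar coordinates as $c_d\int|m_j(\rho)|^2\rho^{d-1}\|\hat f(\rho\,\cdot)\|_{L^2(S^{d-1})}^2\,d\rho$, and use the Stein--Tomas estimate $\|\hat g(\rho\,\cdot)\|_{L^2(S^{d-1})}\lc \rho^{-d/p'}\|g\|_{L^p}$ (valid in the range $p\le 2(d+1)/(d+3)$) with $g=\tilde P_jf$. Combined with the bound $\int_{2^{j-1}}^{2^{j+1}}|m|^2\rho^{d-1}d\rho\lc A^2 2^{2jd/p'}$ that follows directly from \eqref{LpL2cond}, the $2^j$--powers cancel exactly, giving the annular estimate.

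To conclude I would invoke Fourier orthogonality---the spectra of $T_{m_j}f$ have bounded overlap in $j$---to obtain $\|T_mf\|_{L^2}^2\lc \sum_j\|T_{m_j}f\|_{L^2}^2\lc A^2\sum_j\|\tilde P_jf\|_{L^p}^2$, and then the dual Littlewood--Paley inequality for $1<p\le 2$,
$$\Big(\sum_j \|\tilde P_j f\|_{L^p}^2\Big)^{1/2}\lc\|f\|_{L^p}.$$
This follows by duality from $\|\sum_j g_j\|_{L^{p'}}\lc (\sum_j\|g_j\|_{L^{p'}}^2)^{1/2}$ for $p'\ge 2$ and frequency-localized $g_j$, which in turn combines the classical Littlewood--Paley characterization on $L^{p'}$ with Minkowski's inequality in $L^{p'/2}$ (valid since $p'/2\ge 1$). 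Putting everything together yields $\|T_mf\|_{L^2}\lc A\|f\|_{L^p}$.

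The heart of the matter is the annular estimate: it is essentially the Stein--Tomas restriction theorem repackaged as a bound on a single-scale multiplier, and the delicate point is the exact cancellation of scale factors when pairing the restriction estimate with the hypothesis \eqref{LpL2cond}. The two remaining ingredients---Fourier orthogonality and dual Littlewood--Paley---are routine, but both crucially require $p\le 2$, which is automatic in the stated range.
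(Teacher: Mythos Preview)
Your proof is correct and follows essentially the same route as the paper's: the annular estimate via Plancherel in polar coordinates plus Stein--Tomas, then orthogonality on the $L^2$ side and the Littlewood--Paley inequality $(\sum_j\|\tilde P_jf\|_{L^p}^2)^{1/2}\lc\|f\|_{L^p}$ (which the paper obtains directly via Minkowski for $p\le 2$ rather than by your duality detour, but the content is the same).

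One small caveat on necessity: you invoke Theorem~\ref{Lp2thm}(i), but that is stated only for $1<p<\tfrac{2d}{d+1}$, which is strictly smaller than the range $1<p\le\tfrac{2(d+1)}{d+3}$ of the Observation (for $d\ge 2$). This is harmless, since the necessity direction you actually need---boundedness $L^p_\rad\to L^2$ implies \eqref{LpL2cond}---is just the testing argument of \S\ref{easy} and holds for every $1<p\le 2$; the restriction $p<\tfrac{2d}{d+1}$ in Theorem~\ref{Lp2thm}(i) is only needed for sufficiency. So the gap is purely one of citation.
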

\begin{proof}
The necessity has been observed in \S\ref{easy}.
If $m_t$ is supported in $\{\xi:t\le |\xi| \le 2t\}$ then it
follows by a well known argument of Fefferman \cite{F} from the
Stein-Tomas restriction theorem
(\cite{steinharmonic}, ch.IX-2) that
\begin{align*}&\|\cF^{-1}[m_t(|\cdot|)\widehat f]\|_2
\lc \Big(\int_{t}^{2t} |m_t(r)|^2\int_{S^{d-1}}|\widehat f(r\xi')|^2 d\sigma(\xi') r^{d-1} dr\Big)^{1/2}
\\&\lc
\Big(\int_t^{2t} |m_t(r)|^2
\|\tfrac{1}{r^d}f( \tfrac{\cdot}{r})
\|_p^2 r^{d-1} dr\Big)^{\frac 12}
= \|f\|_p
\Big(\int_{t}^{2t} |m_t(r)|^2 r^{2(\frac dp-\frac d2)} \frac {dr}{r}
\Big)^{\frac 12}.
\end{align*}
For global multipliers the result follows now by Littlewood-Paley theory exactly as in the proof of
Theorem \ref{Lp2thm}.
\end{proof}
We note that the restriction $p\le \frac{2(d+1)}{d+3}$
for the result  on general $L^p$ functions is optimal
as follows from the usual Knapp counterexamples for the
restriction theorem.

\section{Conclusion of the proof}\label{interpolsect}
In order to finish the  proof of Theorem \ref{mainhankel}
it just remains  to establish the  $L^p(\mu_d)\to L^q(\mu_d)$ estimates
in Propositions \ref{Eprop} and \ref{Sprop} for $p<q<2$. The appropriate
$L^{p,\sigma}(\mu_d)\to L^{q,\sigma}(\mu_d)$ follow then
 by the real interpolation method, if we take into account
Lemma 
\ref{Brevinclusion}.

The interpolations follow
 results on bilinear interpolation with the complex methods
(i.e. in disguise versions of
Stein's interpolation theorem  for analytic families),
see Theorems 4.4.1 and 4.4.2 in \cite{BL}. Using the first
(and more elementary)  of these results
we interpolate the inequalities
\begin{align*}
&\big\|E_{j,m} f\|_{L^{p}(\mu_d)}\lc
\min
\{ 2^{-m\eps},2^{m(d-1)}\}
 \|\ka_j\|_{L^1((1+|x|)^\eps dx)}
\|f\|_{L^p(\mu_d)},
\\
&\big\|E_{j,m} f\|_{L^{2}(\mu_d)}\lc
2^{jd(1/p-1/2)}
 \|\ka_j\|_{L^2} \|f\|_{L^p(\mu_d)},
\end{align*}
where the first bound has been already been established in \S\ref{sing-hardy}
and
the second is immediate from
\eqref{fixedTjL2}.
Similarly for the singular integrals we interpolate
\begin{align*}
\Big\|\Big(\sum_j|S_{j,n,i} f_j|^2\Big)^{1/2}\Big\|_{L^{p}(\mu_d)}
&\lc \sup_j\|\ka_j\|_{L^1((1+|x|)^\eps dx)}
\Big\|\Big(\sum_j|f_j|^2\Big)^{1/2}\Big\|_{L^{p}(\mu_d)},
\\
\Big\|\Big(\sum_j|S_{j,n,i} f_j|^2\Big)^{1/2}\Big\|_{L^{2}(\mu_d)}
&\lc \sup_j2^{jd(\frac 1p-\frac 12)}\|\ka_j\|_{2}
\Big\|\Big(\sum_j|f_j|^2\Big)^{1/2}\Big\|_{L^{p}(\mu_d)},
\end{align*}
where again the first inequality
has been proved in \S\ref{sing-hardy} and
the second follows from
\eqref{fixedTjL2} and Minkowski's inequality.
In order to obtain
the interpolated $L^p(\mu_d)\to L^q(\mu_d)$ statements we use
Lemma \ref{interpolfloc},
and Theorem 4.4.2  in \cite{BL}
(which involves the $[\cdot,\cdot]^\vth$ functor on one of the entries).
The proof is complete.\qed

We remark that for the interpolation of the singular operators
one  could have also based  the proof on
the more elementary Theorem 4.4.1 in \cite{BL}
which only involves the $[\cdot,\cdot]_\vth$ method; one then has to use
 the fact
that the
 space of  $L^p(\mu_d)$ functions $f$ for which
 $\cB_d f $ has compact support in $(0,\infty)$ is dense in $L^p(\mu_d)$,
see \cite{st-tr}. Thus one can reduce matters to uniform estimates for
compactly supported multipliers and apply  the interpolation result on the spaces $LF_o(p,a,b)$ mentioned in the remark following Lemma
\ref{interpolfloc}.

\section{Miscellanea}
\label{compmult}

\begin{proof}[{\bf Proof of Corollary \ref{besovcor}}]
The
$L^q((1+|r|)^{(d-1)(1-q/2)}dr)$ norm of a function  $\ka$ is dominated
using H\"older's inequality by
\[\Big(\sum_{j=0}^\infty \|\ka\|_{L^q(\cI_j)}^q
2^{j(d-1)(\frac 1q-\frac 12)q} \Big)^{1/q}
\lc \Big(\sum_{j=0}^\infty \|\ka\|_{L^2(\cI_j)}^q
2^{j d(\frac 1q-\frac 12)q} \Big)^{1/q}.
\]
This is applied to
$\ka=\cF^{-1}[\phi m(t\cdot)]$ and
the result follows from the definition of the Besov space.
\end{proof}

\begin{proof}[{\bf Proof of Corollary \ref{interpolcor}}]
This is an immediate consequence of theorem \ref{mainhankel} and the
interpolation formula of Lemma \ref{interpolfloc},
with varying $a$, $b$ (we set $a_i= (d_i-1)(1/q_i-1/2)$ and
 $b_i= d(1/p_i-1/q_i)$ for $i=0,1$).
\end{proof}

\noi{\bf Real interpolation.}
We can also prove some interpolation results using the real method, in view of
the nature of our conditions these are limited to the $K_{\vth,\infty}$ method with a number of restrictions (see \cite{BL} for general references about real interpolation).

Define
$\fM_{d}^{p,q,\sigma}$
as  the space  of all locally  integrable
functions $m$ on $\bbR_+$ for which $T_m$ extends to a
bounded operator from
$L^p(\mu_d)$ to $L^{q,\sigma}(\mu_d)$;  the norm is given by the
operator norm of $T_m$. Thus  $\fM_{d}^{p,q} =\fM_{d}^{p,q,q}.$

Theorem \ref{main} is used to prove
that for fixed $d$ the  weak type multiplier spaces
$\fM^{p,p}_{d,\infty}$, $1<p<2d/(d+1)$,  are stable under real interpolation,
with respect to the
$K_{\vth,\infty}$ method.

\begin{corollary} \label{interpolcorreal}
Suppose
$1<d<\infty$, $1<p_i<\frac{2d}{d+1}$,
 $p_i\le q_i\le 2$, for $i=0,1$, moreover
$p_0\neq p_1$, $p_0^{-1}-q_0^{-1}=p_1^{-1}-q_1^{-1}$.
 Then
\Be\label{interpolreal} [\fM^{p_0,q_0,\sigma_0}_{d}, \fM^{p_1,q_1,\sigma_1}_{d}]_{\vth, \infty} =
\fM^{p,q,\infty}_{d},
\Ee
for $(1/p,1/q)=(1-\vth)(1/p_0,1/q_0)+\vth(1/p_1,1/q_1)$ with
 $0<\vth<1$.
\end{corollary}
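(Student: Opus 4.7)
The plan is to reduce the interpolation of the multiplier spaces to interpolation of (weighted) sequence-valued Lorentz spaces, by using Theorem \ref{mainhankel} as an identification of $\fM^{p,q,\sigma}_d$ with a retract of such a sequence space. Observe first that the hypothesis $p_0^{-1}-q_0^{-1}=p_1^{-1}-q_1^{-1}$ ensures the scaling exponent $b:=d(\tfrac 1p-\tfrac 1q)$ is the same at both endpoints and along the interpolation line. With $\varphi$ as in \eqref{speccutoff}, set $\kappa_j[m]=\cF_\bbR^{-1}[\varphi m(2^j\cdot)]$ and let
\[
X_{q,\sigma}:=L^{q,\sigma}(\nu), \qquad d\nu=(1+|x|)^{d-1}dx,
\]
equipped with the weight $w(x)=(1+|x|)^{-(d-1)/2}$; that is, $\|g\|_{X_{q,\sigma}^w}:=\|wg\|_{L^{q,\sigma}(\nu)}$. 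By Theorem \ref{mainhankel} and Lemma \ref{indepofphi}(ii), one has
\[
\|m\|_{\fM^{p_i,q_i,\sigma_i}_d}\,\approx\,\sup_{j\in\bbZ} 2^{jb}\bigl\|\kappa_j[m]\bigr\|_{X^w_{q_i,\sigma_i}}, \qquad i=0,1,
\]
so $\fM^{p_i,q_i,\sigma_i}_d$ is a retract of $\ell^\infty_b(X^w_{q_i,\sigma_i})$ via the maps $\fA,\fB$ of \eqref{Adef}, \eqref{Bdef}; indeed the boundedness of $\fB$ on the Lorentz-weighted spaces is a straightforward variant of the argument in Lemma \ref{interpolfloc}, using Lemma \ref{straightfconvLor} in place of Lemma \ref{straightfconv}.

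Next I would apply the $K_{\vartheta,\infty}$ real interpolation method. Since retracts commute with interpolation, it suffices to show
\begin{equation}\label{eq:interpseq}
\bigl(\ell^\infty_b(X^w_{q_0,\sigma_0}),\ell^\infty_b(X^w_{q_1,\sigma_1})\bigr)_{\vartheta,\infty}=\ell^\infty_b\bigl(X^w_{q,\infty}\bigr).
\end{equation}
For the sequence part, a direct computation of the $K$-functional gives $K(t,(g_j);\ell^\infty(A_0),\ell^\infty(A_1))=\sup_j K(t,g_j;A_0,A_1)$, so that
\[
\bigl(\ell^\infty_b(A_0),\ell^\infty_b(A_1)\bigr)_{\vartheta,\infty}=\ell^\infty_b\bigl((A_0,A_1)_{\vartheta,\infty}\bigr),
\]
the identity of the weights being essential. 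For the Lorentz part, one has the classical formula (\cite{BL}, Thm.~5.3.1, adapted to weighted measures)
\[
\bigl(L^{q_0,\sigma_0}(\nu),L^{q_1,\sigma_1}(\nu)\bigr)_{\vartheta,\infty}=L^{q,\infty}(\nu),\qquad \tfrac 1q=\tfrac{1-\vartheta}{q_0}+\tfrac{\vartheta}{q_1},
\]
valid since $q_0\neq q_1$ (forced by $p_0\neq p_1$ together with the equal-gap hypothesis). Multiplying by the fixed weight $w$ preserves these identifications and yields \eqref{eq:interpseq}. A final application of Theorem \ref{mainhankel} with $\sigma=\infty$ identifies $\ell^\infty_b(X^w_{q,\infty})$ (as a retract) with $\fM^{p,q,\infty}_d$, completing the proof.

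The main obstacle I anticipate is verifying the retract structure rigorously in the Lorentz setting: one must check that the operator $\fB$ of \eqref{Bdef} is bounded from $\ell^\infty_b(X^w_{q,\sigma})$ into $\fM^{p,q,\sigma}_d$ uniformly in $(q,\sigma)$ as these parameters range over the relevant interval, so that the interpolation passes through. This reduces to a Lorentz-space version of the pointwise convolution estimate of Lemma \ref{straightfconvLor} applied to $\Phi_\tau\ast(\tau^{-1}\kappa_j(\tau^{-1}\cdot))$, where the constants can be taken locally uniform by Lemma \ref{Brevinclusion} (or its $A_j(q,\sigma)$ analogue). Once this uniform retract property is in place, \eqref{interpolreal} follows mechanically from \eqref{eq:interpseq}.
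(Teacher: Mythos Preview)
Your approach is essentially the same as the paper's: identify $\fM^{p,q,\sigma}_d$ (via Theorem \ref{mainhankel}) with a retract of $\ell^\infty_b(L^{q,\sigma}(w,d\nu))$ using the maps $\fA,\fB$ of \eqref{Adef}--\eqref{Bdef}, then combine the elementary identity $(\ell^\infty_b(A_0),\ell^\infty_b(A_1))_{\vartheta,\infty}=\ell^\infty_b((A_0,A_1)_{\vartheta,\infty})$ with standard real interpolation of Lorentz spaces (for which $q_0\neq q_1$ is needed). One small comment: your ``main obstacle'' paragraph overstates the difficulty --- for a retract argument one only needs $\fA,\fB$ bounded at the two \emph{endpoints} $(q_0,\sigma_0)$ and $(q_1,\sigma_1)$, not uniformly over a range, and this is exactly the Lorentz-space variant of the computation in Lemma \ref{interpolfloc} (using Lemma \ref{straightfconvLor}); Lemma \ref{Brevinclusion} plays no role here.
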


\begin{proof}[{Proof of Corollary \ref{interpolcorreal}}]

We first observe that for a compatible pair of Banach spaces $A_0, A_1$ we have the formula
\begin{equation}\label{vectinterpol}
[\ell^\infty_b (A_0), \ell^\infty_b (A_1)]_{\vth, \infty}
= \ell^\infty_b ([A_0,A_1]_{\vth,\infty})
\end{equation}
This follows quickly from the definition of the $K_{\vth,\infty}$ method
(and interchanging two suprema).

We now set $w(r)=(1+|r|)^{-(d-1)/2}$, $d\nu(r)=(1+|r|)^{d-1}$, and let
$L^{q,\sigma}(w, d\nu)$
be the space of functions $f$ for which
$fw$ belongs to Lorentz space $L^{q,\sigma}(d\nu)$
(and the norm is given by $\|fw\|_{L^{q,\sigma}(d\nu)}  $ where we work with a suitable norm on the Lorentz space).
The standard interpolation formulas for Lorentz spaces apply
and by \eqref{vectinterpol} we have
for $q_0\neq q_1$ and $1/q=(1-\vth)/q_0+\vth/q_1$,
\begin{equation*}
[\ell^\infty_b (L^{q_0,\sigma_0}(w, d\nu)),
\ell^\infty_b (L^{q_1,\sigma_1}(w, d\nu))]_{\vth, \infty}
= \ell^\infty_b (L^{q,\infty}(w, d\nu)).
\end{equation*}
Now let
$\floc^{q,\sigma}_b(w,d\nu)$ be the space of all $m$
which are
integrable  over every compact subinterval of $(0,\infty)$
and satisfy the condition
\[
\sup_{t>0} t^b  \big\|\cF_\bbR^{-1}[\phi m(t\cdot)]
\big\|_{L^{q,\sigma}(w, d\nu)} <\infty.
\]
Then the arguments in the proof of Lemma \ref{interpolfloc} show that
the maps $\fA$, $\fB$ defined in \eqref{Adef}, \eqref{Bdef}
can be used to
show that
$\floc^{q,\sigma}_b(w,d\nu)$ is a retract of
$\ell^\infty_b (L^{q,\sigma}(w, d\nu))$.
One deduces quickly that for $q_0\neq q_1$
\begin{equation*}
[\floc^{q_0,\sigma_0}_b(w,d\nu),
\floc^{q_1,\sigma_1}_b(w,d\nu)]_{\vth,\infty}=
\floc^{q,\infty}_b(w,d\nu)
\end{equation*}
and the asserted result follows from Theorem \ref{mainhankel}
 if we apply the last formula to  the spaces  $\fM^{p,q,\sigma}_d$
with fixed $d$ and fixed
$b=d(1/p-1/q)$.
\end{proof}

\noi{\bf Remarks on compactly supported multipliers.} The proofs show that for multipliers which are compactly supported away from the origin the result of Theorem \ref{mainhankel} can be sharpened.

\begin{theorem}\label{mainhankelcomp}
Let $m$ be compactly supported and integrable in $(0,\infty)$.
Suppose
$1<d<\infty$, $1<p<\frac{2d}{d+1}$, $p\le q< 2$ and
$1\le \sigma\le \infty$.
Then the following statements are equivalent.

(i) $T_m$ maps $L^{p,\sigma}(\mu_d)$
boundedly to $L^{q,\sigma}(\mu_d)$.

(ii) $T_m$ maps $L^{p,1}(\mu_d)$
boundedly to $L^{q,\sigma}(\mu_d)$.

(iii)  $\|\cB_d[m]\|_{L^{q,\sigma}(\mu_d)}<\infty.$

(iv)
$\big\| (1+|\cdot|)^{-\frac{d-1}{2} } \cF^{-1}_\bbR[m]
\big\|_{L^{q,\sigma}((1+|x|)^{d-1} dx)} <\infty$.
\end{theorem}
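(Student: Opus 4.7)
The strategy is to deduce Theorem \ref{mainhankelcomp} from Theorem \ref{mainhankel} by exploiting two simplifications afforded by the compact support of $m$. First, with $\supp m\subset[a,b]\subset(0,\infty)$ and $\phi\in C^\infty_c(\bbR_+)$ equal to $1$ on $[a,b]$, the scaling identity $\cB_d[m(t\cdot)](\rho)=t^{-d}\cB_d[m](\rho/t)$, combined with the analogous identity for the one-dimensional Fourier transform and Lemma \ref{straightfconvLor}, gives
\[
\sup_{t>0}\, t^{d(1/p-1/q)}\|\cB_d[\phi m(t\cdot)]\|_{L^{q,\sigma}(\mu_d)}\,\approx\,\|\cB_d[m]\|_{L^{q,\sigma}(\mu_d)},
\]
with an analogous equivalence for condition (iv). Consequently conditions (iii), (iv) here are equivalent to their counterparts in Theorem \ref{mainhankel}, and the equivalence (iii)$\iff$(iv) together with the necessity direction (i)$\implies$(ii)$\implies$(iii),(iv) follow verbatim from \S\ref{easy}, whose arguments require only $\sigma\ge 1$.

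For the sufficiency (iv)$\implies$(i), the second simplification is that $\vphi(2^{-j}\rho)m(\rho)$ vanishes except for $j$ in a finite set $F$, so $T_m=\sum_{j\in F}L_j T^j\widetilde L_j$. Since each $L_j,\widetilde L_j$ is bounded on every $L^{p,\sigma}(\mu_d)$ with $1<p<\infty$, $1\le\sigma\le\infty$, it suffices to show $T^j:L^{p,\sigma}(\mu_d)\to L^{q,\sigma}(\mu_d)$ for each $j\in F$. Decomposing $T^j=\sum_m H_{j,m}+\sum_{n,i}S_{j,n,i}+\sum_m E_{j,m}$ as in \S\ref{sufficiency}, the spatial summation in the Hardy part is handled by the crude bound $\|f\chi_{m-j}\|_{L^{p,\infty}}\le\|f\|_{L^{p,\infty}}$ together with the convergence of $\sum_m\min\{2^{-m\delta(p)},2^{md/p'}\}$ from Proposition \ref{Hprop} (the factor is summable in the range $1<p<2d/(d+1)$), yielding an $L^{p,\infty}\to L^{q,\sigma}$ bound and hence, using $L^{p,\sigma}\subset L^{p,\infty}$, an $L^{p,\sigma}\to L^{q,\sigma}$ bound. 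The error and singular-integral pieces give the $L^{p,\sigma}\to L^{q,\sigma}$ bound directly from Propositions \ref{Eprop}, \ref{Sprop}, whose proofs (\S\ref{sing-hardy}--\S\ref{interpolsect}) produce the one-scale Lorentz estimates for all $\sigma\in[1,\infty]$.

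The main obstacle is to confirm that the restriction $\sigma\ge p$ in Theorem \ref{mainhankel} is only an artifact of the infinite spatial-scale summation in \S5---where it is used to bound $\sum_m\|f_j\chi_{m-j}\|_{L^{p,\omega}}^\omega$ by $\|f_j\|_{L^{p,\omega}}^\omega$ via the quasi-norm formula \eqref{quasinormlor} and the disjointness of the spatial intervals---and not of the one-scale estimates themselves. In the present scalar, finite-$F$ setting that step is replaced by the elementary inequality $\|f\chi_E\|_{L^{p,\infty}}\le\|f\|_{L^{p,\infty}}$, valid for every measurable $E$ and all $\sigma$, with the summability absorbed into the convergent $\min$-factor. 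With this adjustment the entire argument goes through uniformly for $1\le\sigma\le\infty$, giving the claimed equivalence in the full Lorentz range.
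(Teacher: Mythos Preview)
Your proposal is correct and follows essentially the same approach as the paper's own (very terse) sketch. You correctly identify the two crucial points: only finitely many dyadic frequency pieces $j\in F$ occur, and the extended range $1\le\sigma\le\infty$ in (i) is possible precisely because Proposition~\ref{Hprop} already has $L^{p,\infty}(\mu_d)$ on the input side of \eqref{Hest}, so the restriction $\sigma\ge p$ --- which in \S\ref{sufficiency} enters only through the vector-valued disjointness argument after Lemma~\ref{vectorlor} --- disappears in the scalar single-$j$ setting.

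One small remark on phrasing. The paper says that in this compactly supported case ``no Littlewood-Paley theory and singular integral estimates are needed.'' This should be read as: the square-function inequalities \eqref{LP1}, \eqref{LP2} are replaced by the elementary boundedness of each individual $L_j$, $\widetilde L_j$ on $L^{p,\sigma}(\mu_d)$ (exactly as you say), and the \emph{vector-valued} Calder\'on--Zygmund argument behind Proposition~\ref{Sprop} is no longer required in its full strength, since the $\ell^2$-sum over $j$ collapses. You still invoke Propositions~\ref{Eprop} and~\ref{Sprop} (in their scalar, single-$j$ form) for the $E$ and $S$ pieces; this is perfectly valid, and those propositions do give $L^{p,\sigma}\to L^{q,\sigma}$ bounds for all $1\le\sigma\le\infty$. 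So your argument is slightly more explicit than the paper's sketch but not different in substance.
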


A similar  statement can be formulated for the analogue of Theorem
\ref{main} (again for $m$ supported in $(1/2,2)$).
 In particular for the case $\sigma=\infty$ we see that then  the  restricted
weak type $(p,p)$ inequality, the weak type $(p,p)$ inequality and the
stronger $L^{p,\infty}_\rad\to L^{p,\infty}_\rad$ bound are all equivalent in the range
$1<p<\frac{2d}{d+1}$.
We note that for the case of Bochner-Riesz multipliers
such  endpoint $L^{p,\infty}_\rad$ bounds had been obtained by Colzani, Travaglini and Vignati \cite{ctv}, extending earlier weak type endpoint
bounds by Chanillo and Muckenhoupt \cite{chanmucken}. The result for Bochner-Riesz means follows from the above theorem
(after separately dealing with the irrelevant part of the multiplier near $0$).
 This phenomenon
has no analogue for Fourier multipliers on $\bbR^d$ since
$L^{p,\infty}\to L^{p,\infty}$ boundedness for translation invariant
operators on $\bbR^d$ already implies $L^p\to L^p$ boundedness
(\cite{colzani}, \cite{steinberg}).

The proof of Theorem \ref{mainhankelcomp} is essentially  the same
as the proof of Theorem \ref{mainhankel}, but more elementary
since only a finite number of dyadic scales on the multiplier side
are involved hence  no Littlewood-Paley theory and singular
integral estimates are needed. The  difference (and improvement)
in  condition (i), and the extended range of $\sigma$  come from
Proposition \ref{Hprop} which involves only one dyadic scale and
the space $L^{p,\infty} (\mu_d)$ on the right hand side of
\eqref{Hest}.

\section{Open problems}\label{open}

\subsection{\it Radial Fourier multipliers}
Let $K$ be a radial convolution kernel on $\bbR^d$, $d\ge 2$.

\noi{\it Question:} Is there a $p>1$ for which the condition
\eqref{LpscondonK} (with $\sigma=p$)
 implies that the convolution operator $f\mapsto K*f$ is bounded on $L^p(\bbR^d)$?

The local version of this is open as well:

\noi{\it Question:}
Suppose that $K$ is radial and $\widehat K$ is compactly supported in
$\bbR^d\setminus\{0\}$.
Is there a $p>1$ for which the condition $K\in L^p(\bbR^d)$
implies that the convolution operator $f\mapsto K*f$ is bounded on
$L^p(\bbR^d)$?

It is known (\cf. \cite{ms-adv}) that under a  slightly weaker condition
than \eqref{LpscondonK},
namely the finiteness of $\sup_{t>0}\|\Phi* K_t\|_{L^p((1+|x|)^\eps)}$
for some $\eps>0$ implies $L^p$ boundedness for certain $p>1$. The condition on $p$ is that for
the dual exponent $p'$ the  local smoothing problem for
the wave equation in $\Bbb R^{d+1}$ can be solved up to endpoint estimates.
Wolff \cite{wolff} proved
 such estimates for $d=2$ and large $p'$; for corresponding results in higher dimensions see \cite{LW},
and for the currently known ranges
of Wolff's inequality  see \cite{gss}.

It is likely that in order to prove or come closer to a characterization
one needs to prove an endpoint version of Wolff's inequality.
The currently known method of proof (by induction on scales)
fails to give such sharp bounds.

\subsection{\it Localized Besov conditions.}
Short of a characterization one can ask whether for some $p>1$
the $L^p$ condition of Corollary \ref{besovcor}
\[\sup_{t>0} \|\varphi m(t\cdot)\|_{B^2_{d(\frac 1p-\frac 12),p}}<\infty
\]
implies that $m(|\cdot|)$ is a multiplier of $\cF L^p(\bbR^d)$. Again
the analogous question for $m$ supported in $(1/2,2)$ is also open.
 A result which comes close is in \cite{se-ind}.
There a scale of spaces $R^p_{\alpha,s}$
is introduced with $B^p_{\alpha,1} \subset R^p_{\alpha,s}\subset
B^p_{\alpha,p}$ for $1\le s\le p$ and $L^p(\bbR^d)$ boundedness is proved
under the condition
$\sup_{t>0} \|\varphi m(t\cdot)\|_{R^2_{d(1/p-1/2),p}}<\infty$,
for $1<p\le\frac{2(d+1)}{d+3}$.

\subsection{\it Localized multiplier conditions} Does the analogue of
Corollary \ref{gl-loccorr} hold for radial Fourier multipliers, acting on general functions in $L^p(\bbR^d)$,  some $p>1$?

\subsection{\it Hankel multipliers in the complementary range.}
No nontrivial characterization just
in terms of the convolution kernel seems to be  known
(and perhaps  may not be expected) for the range $\frac{2d}{d+1}\le p<2$.

\medskip

{\it Addendum, January 2008.} Very recently, after the submission of this paper, F. Nazarov and the second author made some progress concerning the 
problems  on radial Fourier multipliers.
The manuscript \cite{nase} contains characterizations for given 
$p$, provided  that the dimension $d$ is large enough. Presently,
there are no optimal results concerning the range of $p$;  moreover, in 
dimensions $2,3,4$, the problems are still open for any $p\in (1,2)$.

\appendix
\section{On Zafran's result}
\label{appendix}
Recall that for a compatible couple of Banach spaces $(A_0,A_1)$ a
space
$X\subset A_0+A_1$ is called an interpolation space for $(A_0,A_1)$ if
there is a  constant $C$ so that for every $T:A_0+A_1\to A_0+A_1$ which is bounded on $A_0$ and bounded on $A_1$ we have
\Be \|T\|_{X\to X} \le C \max \big\{
\|T\|_{A_0\to A_0}, \|T\|_{A_1\to A_1}\big\}.
\Ee
Zafran \cite{z} showed that the space $M^p(\bbR)$ is not an interpolation space for the pair $M^1(\bbR)$ (the Fourier transforms of bounded Borel measures)
and $M^2(\bbR)=L^\infty(\bbR)$. His arguments in conjunction
 with Bourgain's theorem on $\Lambda(p)$ sets
can be extended to show

\begin{proposition}
\label{appprop}
Let $1\le p_0< p< p_1\le 2$. Then $M^p(\bbR)$ is not an
interpolation space between $M^{p_0}(\bbR)$ and $M^{p_1}(\bbR)$.
\end{proposition}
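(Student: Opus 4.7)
The plan is to extend Zafran's original argument for the pair $(M^1(\bbR),M^2(\bbR))$ to the general range $1\le p_0<p<p_1\le 2$, replacing the Rudin $B_{2k}$-sets used by Zafran with sets arising from Bourgain's $\Lambda(p)$ theorem. I would argue by contradiction: assuming $M^p$ is an interpolation space for $(M^{p_0},M^{p_1})$, there exists $C$ with
$$\|T\|_{M^p\to M^p}\le C\,\max\{\|T\|_{M^{p_0}\to M^{p_0}},\|T\|_{M^{p_1}\to M^{p_1}}\}$$
for every linear $T$ on multipliers, so it suffices to produce a sequence $\{T_N\}$ violating this bound. By de Leeuw transference I may work on $\bbZ$ in place of $\bbR$.

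Setting $q_i=p_i'$ so that $2\le q_1<q<q_0\le\infty$, Bourgain's theorem produces, for each $N$, a set $E_N\subset\{1,\dots,N\}$ with $|E_N|\gtrsim N^{2/q_0}$ that is $\Lambda(q_0)$ with constant independent of $N$. By duality $\chi_{E_N}$ is a Fourier multiplier on $L^{p_0}(\bbT)$ of uniformly bounded norm, and interpolation with the trivial $L^2$ bound gives uniform bounds on $L^{p_1}(\bbT)$ as well.

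Following Zafran's scheme I would next build the operators $T_N$ as Schur-multiplier/convolution type transformations acting on multipliers themselves, and crucially not as pointwise multiplications on $M^p$ (which would be tamed by Riesz--Thorin). A natural realization is $T_N(m)=\sum_{n\in E_N}\epsilon_n\,\tau_{a_n}(\phi_n\,m)$, where $\phi_n$ are disjointly supported smooth frequency localizers, $\tau_{a_n}$ are modulations associated to well-separated points $a_n$, and $\epsilon_n$ are Rademacher signs. Because translations and modulations act isometrically on every $M^q$, the $\Lambda(q_0)$ property of $E_N$ combined with Khintchine's inequality and a Littlewood--Paley decomposition yields uniform bounds for $\|T_N\|_{M^{p_i}}$, $i=0,1$.

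The decisive and hardest step will be proving $\|T_N\|_{M^p\to M^p}\to\infty$. I would test $T_N$ against a carefully designed multiplier $m^*$ and use Khintchine's inequality on the signs to reduce the $M^p$-norm to a square function whose size is governed by the number of active terms, namely $|E_N|\gtrsim N^{2/q_0}$. The fact that $E_N$ cannot be $\Lambda(q)$ with a uniform constant (since $|E_N|$ exceeds the critical size $N^{2/q}$) then forces this square function to diverge. The main obstacle lies in this last numerology: one must match the gap between $N^{2/q_0}$ and $N^{2/q}$ precisely with the expected gap between the $M^{p_0}$/$M^{p_1}$ and $M^p$ operator norms, which in turn requires identifying the right Knapp-type extremizer $m^*$ for which the failure of $\Lambda(q)$-behavior transfers from the scalar multiplier level to the level of the operator norm on multipliers.
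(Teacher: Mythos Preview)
Your proposal contains a decisive error in the numerology, and once it is corrected the approach collapses. You choose $E_N$ to be a $\Lambda(q_0)$ set with $q_0=p_0'>q=p'$. But $\Lambda$-classes are nested the other way: any $\Lambda(q_0)$ set is automatically $\Lambda(q)$ with comparable constant whenever $q\le q_0$ (this is just H\"older's inequality on $\bbT$). Hence your $E_N$ \emph{is} uniformly $\Lambda(q)$, and the lower bound you want---which you say is driven by the ``failure of $\Lambda(q)$-behavior''---cannot be extracted. Your stated inequality $|E_N|>N^{2/q}$ is also backwards: you have $|E_N|\approx N^{2/q_0}$ and $2/q_0<2/q$, so in fact $|E_N|\lesssim N^{2/q}$.

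The paper's argument (following Zafran) places the Bourgain set at the \emph{middle} exponent $p'$, not at $p_0'$, and uses it for the \emph{lower} bound on $\|L_N\|_{M^p\to M^p}$, not for the upper bounds. The operators are rank one, $L_N(m)=v_N(m)\,h_N$, where $h_N(\xi)=\chi(\xi)e^{iN|\xi|^2}$ satisfies $\|h_N\|_{M^q}\approx N^{1/q-1/2}$ by stationary phase, and $v_N(m)=N^{-1}\sum_{k\in S_N}\rho_k\int m(\xi)\eta(\xi-k)\,d\xi$ with signs $\rho_k$ chosen via Salem--Zygmund so that $\sup_x|N^{-1}\sum_{k\in S_N}\rho_k e^{ikx}|\lesssim N^{-1/2}\sqrt{\log R}$. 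The upper bounds at $M^{p_0}$ and $M^{p_1}$ come from two different estimates on $|v_N(m)|$ (one trivial, one via Plancherel and the random-sign bound) combined with $\|h_N\|_{M^q}$. The lower bound at $M^p$ comes from testing $L_N$ on the specific multiplier $\widehat\omega_N=N^{-1}\sum_{k\in S_N}\rho_k\,\eta(\cdot-k)$, whose $M^p$-norm is $O(N^{-1})$ \emph{precisely because} $S_N$ is $\Lambda(p')$ (via Jodeit's extension theorem). Your operator construction, besides being only schematic, offers no substitute for this rank-one mechanism.
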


We start quoting a standard result on
random Fourier series due to Salem and Zygmund (\cite{salz}, ch. IV);
it can be
 proved from the distribution inequality for Rademacher expansions and Bernstein's inequality for trigonometric polynomials.
Let $r_k$ be the sequence of Rademacher functions and
define $$F_R(t,\theta)=\sum_{k=1}^R a_k e^{ik\theta} r_k(t).$$
Then there is a constant $C$ so that for all  integers $R\ge 2$,
and for  $1\le \rho<\infty$
\Be \label{explinfty}
\Big(\int_0^1   \sup_\theta|F_R(t,\theta)|^\rho dt\Big)^{1/\rho} \le C
 \sqrt {\rho\log R}
(\sum_k|a_k|^2)^{1/2}.
\Ee
By the standard averaging argument the $\log R$ term may be dropped if the supremum in $\theta$ is replaced by an
$L^\rho$ norm.


The proof of Proposition \ref{appprop} relies on a deep result
by Bourgain \cite{bourgain}
(proved earlier
by Rudin \cite{rudin} for  $p'$  an even integer).

\noi{\bf Bourgain's theorem}. {\it Let $1<p\le 2$, $p'=p/(p-1)$. There is a
constant $C_p$ with the following property. For each
integer $N\ge 2$ there exists a
set $S_N$ of cardinality $N$ which consists of integers in $[0,N^{p'/2}]$ so that}
\Be \label{Lambda}
\Big(\int_0^{2\pi} \Big|\sum_{k\in S_N}a_k e^{ikx} \Big|^{p'}
dx\Big)^{1/p'} \le
C_p \Big(\sum_{k}|a_k|^2\Big)^{1/2}.
\Ee
In what follows we shall always fix $p$ and the associated family of
 sets $S_N$ for which \eqref{Lambda} holds.
A consequence of \eqref{Lambda}  is that
\Be \label{bourgmult} \Big\| \sum_{k\in S_N} b_k \eta(\cdot-k)
\Big\|_{M^p(\bbR)} \le C(p) \sup_k|b_k|
\Ee
where $\eta$ is the Fej\'er multiplier $$\eta(\xi)=\begin{cases}
1-|\xi|, \quad &|\xi|\le 1,
\\
0, \quad &|\xi|>1.
\end{cases}
$$
To see \eqref{bourgmult} we first note that  \eqref{Lambda} implies that 
the sequence
$\{b_k \chi_{S_N}(k)\}_{k\in \bbZ}$ defines  a multiplier in
$M_2^{p'}(\bbZ)$, and by duality a multiplier in $M_p^2(\bbZ)$, with norms bounded by $C\|b\|_{\ell^\infty}$ 
(by this we mean that the corresponding convolution operator maps $L^p(\bbT)$ to $L^2(\bbT)$ with norm $\lc
\|b\|_{\ell^\infty}$). By H\"older's inequality (using the compactness of $\bbT$) it also follows that this sequence belongs to $M_p(\bbZ)$, since
$p\le2$.
Now Jodeit's extension result  \cite{jodeit}
(see also \cite{ftg})  for multipliers in $M^p(\bbZ)$ says
\Be\label{ext}
\big\|\sum_k m_k \eta(\cdot-k)\big\|_{M^p(\bbR)}\lc
 \big\|\{m_k\}\big\|_{M^p(\bbZ)}, \quad 1\le p\le q\le \infty.
\Ee
Inequality \eqref{bourgmult} follows.

A third ingredient will be a  sequence of multipliers $h_N$ which belong to all $M^q(\bbR)$ classes and satisfy the lower and upper bounds
\begin{equation}\label{b}
    \|h_N\|_{M^q(\bbR)}\approx N^{\frac1q-\frac12},\quad
\text{for } 1\le q\le 2.
\end{equation}
There are many examples of such families, we choose
$$h_N(\xi)= \chi(\xi) e^{iN|\xi|^2}$$
where
$\chi$ is a smooth function supported in $(1/2,2)$ which is equal to one on
$[3/4,5/4]$. To see that \eqref{b} holds true we examine the kernel
$K_N=\cF^{-1}[h_N]$.
By stationary phase arguments we see that
$|K_N(x)|\lc N^{-1/2}$ for $N/4\le x\le 4N$ and
$|K_N(x)|\ge c N^{-1/2}$ for $3N/2\le x\le 5N/2$; moreover by
integration by parts  $|K_N(x)|\le C_L x^{-L}$ for $x\ge 4N$ and
$|K_N(x)|\le C_L N^{-L}$ for $x<N/4$. This shows that
$\|K_N\|_{L^q}\ge c N^{1/q-1/2}$ and since $h_N$ has compact support
this implies the lower bound in \eqref{b}. The kernel calculation also
implies the upper bound for $q=1$ and interpolation
with the trivial $L^2$ bound yields \eqref{b}.

\begin{proof} [Proof of Proposition \ref{appprop}.]
Let $N$ be a large integer, $R\gg N$
 and let $S_N$ be a set in $[0,R]$ so that \eqref{bourgmult}
holds 
(by Bourgain's theorem we may choose $R\approx N^{p'/2}$).
Essentially following Zafran we then consider the rank one  operators
$L_N:M^q\to M^q$ defined by
\[
L_N(m)= v_N(m) h_N,  \quad\text{ where } v_N(m)=\frac{1}{N}
\sum_{k\in S_N} \rho_k \int m(\xi)  \eta(\xi-k) d\xi.
\]
Here we assume that $\rho_k\in \{1,-1\}$
are chosen so that
\begin{equation}\label{a}
    \sup_{x} \Big| N^{-1} \sum_{k\in S_N} \rho_k e^{ikx}\Big| \le C N^{-1/2}
\sqrt{\log R};
\end{equation}
this can be achieved
by \eqref{explinfty}.

We shall show that
\Be \label{upperbound} \|L_N\|_{M^q\to M^q} \le C
\min\{ N^{1/q-1/2}, N^{-1+1/q} R^{1/q'}\sqrt{\log R} \}, \quad 1\le q\le 2;
\Ee
moreover if $p$ is as in \eqref{bourgmult} then
\Be \label{lowerbound} \|L_N\|_{M^p\to M^p} \gtrsim\,N^{1/p-1/2}.
\Ee

We first show that the validity of \eqref{upperbound} and \eqref{lowerbound} implies the assertion of the Proposition.
Namely  if $M^p(\bbR)$ were  an interpolation space
of $(M^{p_0}(\bbR), M^{p_1}(\bbR))$, with $p_0<p<p_1$,  then
\Be\label{interpolspace}
\|L_N\|_{M^p\to M^p}\leq\,\cC\max\,\{\|L_N\|_{M^{p_0}\to
M^{p_0}},\|L_N\|_{M^{p_1}\to M^{p_1}}\}.
\Ee
We use the first bound  in
\eqref{upperbound} for $q=p_1$ and the second one for $q=p_0$. Thus by
\eqref{interpolspace} and \eqref{lowerbound}
\[ N^{1/p-1/2}\lc \cC \max\{ R^{1/p_0'}\sqrt{\log R} N^{-1/p_0'}, \,
N^{1/p_1-1/2} \}.\]
By Bourgain's theorem we may choose $N$ large and
$R\approx N^{p'/2}$.
Since $p<p_1$, the last displayed inequality implies
$1/p-1/2\le (\frac{p'}2-1)/p_0' $
which solving for $p$ is equivalent to $p\leq p_0$, a
contradiction.

\noi{\it Proof of \eqref{upperbound}}.
We set  $\omega_N:= N^{-1}\sum_{k\in S_N} \rho_k \cF^{-1}[ \eta(\cdot-k)]$.
Since the Fej\'er kernel $\cF^{-1}[\eta]$ belongs to $L^1\cap L^\infty$
we observe that  
$\|\cF^{-1}[\eta]\|_{L^r}<\infty$
and hence by \eqref{a}
\Be \label{LqbdomegaN}
\|\om_N\|_{L^r(\bbR)} \lc N^{-1/2} \sqrt{\log R}, \quad 1\le r< \infty.
\Ee

In view of \eqref{b} the inequality \eqref{upperbound} follows from
\Be\label{linfctl}
|v_N(m)|\lc \min\{ 1, N^{-1/2} R^{1/q'}\sqrt{\log R}
\} \,\|m\|_{M^q}, \quad 1\le q\le 2.
\Ee
The first bound in
\eqref{linfctl}
is obvious since
$|v_N(m)|\le \|m\|_\infty$. The second
follows from Plancherel's theorem. To see this
let
$\zeta_R(\xi)= \zeta_0(\xi/R)$ where $\zeta_0$ is compactly supported with the property that $\zeta_0(\xi)=1$ for $|\xi|\le 2$. Then by
\eqref{LqbdomegaN}
\begin{align*}
|v_N(m)|&= c\Big|\int \cF^{-1}[m \zeta_R] (x)
\omega_N(x) dx\Big|
\\ &\le \|\omega_N\|_{q'} \|\cF^{-1}[m \zeta_R]\|_q
\le \|\omega_N\|_{q'}
\|m\|_{M^q}
\|\cF^{-1}[ \zeta_R]\|_q
\end{align*}
and  the second bound in  \eqref{linfctl} follows if we observe that
$\|\cF^{-1}[ \zeta_R]\|_q=O(R^{1/q'})$. Thus  \eqref{upperbound} is proved.

\noi{\it Proof of \eqref{lowerbound}.}
Here we use  \eqref{bourgmult} (which  was a consequence
of the crucial
$\Lambda(p')$ estimate for the set $S_N$).
We apply $L_N$ to $\widehat \omega_N$ and obtain
\[
\|L_N\|_{M^p\to M^p}\geq  \,\frac{\|L_N(\widehat\omega_N)\|_{M^p}}
{\|\widehat \omega_N\|_{M^p}}
=
\frac{ \|\widehat \omega_N\|_2^2 \|h_N\|_{M^p}}
{\|\widehat \omega_N\|_{M^p}}
\gtrsim\,\|h_N\|_{M^p}
\]
where we have used that $\|\widehat \omega_N\|_2^2 \approx N^{-1}$ and
$N\|\widehat \omega_N\|_{M^p} \lc 1$,
by
\eqref{bourgmult}. Thus
\eqref{lowerbound}  follows from \eqref{b}.
\end{proof}

\end{document}